\newcommand{\Sym}{\mathrm{Sym}}
\newtheorem{theorem}{Theorem}[section]
\newtheorem{corollary}[theorem]{Corollary}
\newtheorem{proposition}[theorem]{Proposition}
\newtheorem{lemma}[theorem]{Lemma}
\theoremstyle{remark}
\newtheorem*{remark}{Remark}
\numberwithin{equation}{section}
\title[Exceptional zeros and Sato--Tate distributions]{Exceptional zeros of Rankin--Selberg $L$-functions and joint Sato--Tate distributions}
\author{Jesse Thorner}
\address{Department of Mathematics, University of Illinois, Urbana, IL 61801, USA}
\email{\href{mailto:jesse.thorner@gmail.com}{jesse.thorner@gmail.com}}
\begin{document}

\begin{abstract}
Let $\chi$ be an idele class character over a number field $F$, and let $\pi,\pi'$ be non-dihedral twist-inequivalent cuspidal automorphic representations of $\mathrm{GL}_2(\mathbb{A}_F)$.  We prove that if $m,n\geq 0$ are integers, $m+n\geq 1$, $F$ is totally real, $\chi$ corresponds with a ray class character, and $\pi,\pi'$ correspond with primitive non-CM holomorphic Hilbert cusp forms, then the Rankin--Selberg $L$-function $L(s,\mathrm{Sym}^m(\pi)\times(\mathrm{Sym}^n(\pi')\otimes\chi))$ has a standard zero-free region with no exceptional Landau--Siegel zero.  This is new even for $F=\mathbb{Q}$.  As an application, we establish the strongest known unconditional effective rates of convergence in the Sato--Tate distribution for $\pi$ and the joint Sato--Tate distribution for $\pi$ and $\pi'$.
\end{abstract}

\maketitle

\section{Introduction and statement of the main results}
\label{sec:intro}

Let $F$ be a number field, $\mathbb{A}_F$ be the ring of adeles over $F$, and $\mathfrak{F}_{n}$ be the set of cuspidal automorphic representations $\pi$ of $\mathrm{GL}_{n}(\mathbb{A}_F)$ with unitary central character $\omega_{\pi}$.  Let $C(\pi)$ be the analytic conductor of $\pi$ (see \eqref{eqn:AC_def}), $L(s,\pi)$ be the standard $L$-function, and $\widetilde{\pi}\in\mathfrak{F}_{n}$ be the contragredient of $\pi$.  The generalized Riemann hypothesis (GRH) predicts that if $\pi\in\mathfrak{F}_{n}$ and $\mathrm{Re}(s)>\frac{1}{2}$, then $L(s,\pi)\neq 0$.  Jacquet and Shalika \cite{JS} proved that if $\mathrm{Re}(s)\geq 1$, then $L(s,\pi)\neq 0$, extending the work of Hadamard and de la Vall{\'e}e Poussin for the Riemann zeta function.  Let $|\cdot|$ denote the idelic norm, and let $t\in\mathbb{R}$.  By replacing $\pi$ with $\pi\otimes|\cdot|^{it}$ and varying $t$, we find that it is equivalent to prove that for all $\pi\in\mathfrak{F}_{n}$ and $\sigma\geq 1$, we have that $L(\sigma,\pi)\neq 0$.  By \cite[Theorem 1.1]{Wattanawanichkul}, there exists an absolute and effectively computable constant $\Cl[abcon]{ZFR1}=\Cr{ZFR1}>0$ such that if $\pi\neq\widetilde{\pi}$, then
\begin{equation}
\label{eqn:ZFR}
L(\sigma,\pi)\neq 0,\qquad \sigma\geq  1-\Cr{ZFR1}/(n\log C(\pi)).
\end{equation}
If $\pi=\widetilde{\pi}$, then $L(\sigma,\pi)$ has at most one zero $\beta_{\pi}$ (necessarily simple) in the interval \eqref{eqn:ZFR}.  If no such zero exists, then $L(s,\pi)$ has {\it no exceptional zero} or {\it no Landau--Siegel zero}.


Given $(\pi,\pi')\in\mathfrak{F}_{n}\times\mathfrak{F}_{n'}$, let $L(s,\pi\times\pi')$ be the Rankin--Selberg $L$-function, as defined by Jacquet, Piatetski-Shapiro, and Shalika \cite{JPSS}.  Shahidi \cite{Shahidi} proved that if $\mathrm{Re}(s)\geq 1$, then $L(s,\pi\times\pi')\neq 0$.  Replacing $\pi$ with $\pi\otimes|\cdot|^{it}$ and varying $t\in\mathbb{R}$, we find that it suffices to prove that if $(\pi,\pi')\in\mathfrak{F}_{n}\times\mathfrak{F}_{n'}$ and $\sigma\geq 1$, then $L(\sigma,\pi\times\pi')\neq 0$.  The only fully uniform improvement is Brumley's narrow zero-free region (\cite{Brumley} and \cite[Appendix]{Lapid})---there exist effectively computable constants $\Cl[abcon]{Brumley}=\Cr{Brumley}(n,n',F,\varepsilon)>0$ and $\Cl[abcon]{Brumley2}=\Cr{Brumley2}(n,n',F,\varepsilon)>0$ such that
\[
L(\sigma,\pi\times\pi')\neq 0,\qquad \sigma\geq 1-\Cr{Brumley}/(C(\pi)C(\pi'))^{\Cr{Brumley2}}.
\]
Harcos and the author \cite[Theorem 1.1]{HarcosThorner} proved that for all $\varepsilon>0$, there exists an ineffective constant $\Cl[abcon]{Siegel}=\Cr{Siegel}(\pi,\pi',\varepsilon)>0$ such that if $\chi\in\mathfrak{F}_{1}$, then
\[
L(\sigma,\pi\times(\pi'\otimes\chi))\neq 0,\qquad \sigma\geq 1-\Cr{Siegel}/C(\chi)^{\varepsilon}.
\]

Langlands conjectured that $L(s,\pi\times\pi')$ factors as a product of standard $L$-functions (i.e., $L(s,\pi\times\pi')$ is modular).  As of now, this is only known in special cases, most notably when $\pi\in\mathfrak{F}_{2}$ and $\pi'\in\mathfrak{F}_{2}\cup\mathfrak{F}_{3}$ \cite{KimShahidi,RamakrishnanWang}.  Hoffstein and Ramakrishnan \cite{HoffsteinRamakrishnan} proved that if all Rankin--Selberg $L$-functions are modular and $\pi \in \cup_{n=2}^{\infty}\mathfrak{F}_{n}$, then $L(s,\pi)$ has no zero in \eqref{eqn:ZFR}, eliminating exceptional zeros.  Modularity is known only in special cases, so unconditionally eliminating exceptional zeros remains a difficult and fruitful problem.  For a Rankin--Selberg $L$-function $L(s,\pi\times\pi')$, we say that $L(s,\pi\times\pi')$ has no exceptional zero if there exists an effectively computable constant $\Cl[abcon]{ZFR11}=\Cr{ZFR11}(n,n')>0$ such that
\[
L(\sigma,\pi\times\pi')\neq 0,\qquad \sigma \geq 1- \Cr{ZFR11}/\log(C(\pi)C(\pi')).
\]

Let $v$ be a place of $F$, and let $F_v$ be the completion of $F$ relative to $v$.  Given $\pi\in\mathfrak{F}_{2}$, we express $\pi$ as a restricted tensor product $\bigotimes_v\pi_v$ of smooth, admissible representations of $\mathrm{GL}_2(F_v)$.  If $m\geq 0$, then $\mathrm{Sym}^m\colon \mathrm{GL}_2(\mathbb{C})\to\mathrm{GL}_{m+1}(\mathbb{C})$ is the $(m+1)$-dimensional irreducible representation of $\mathrm{GL}_2(\mathbb{C})$ on symmetric tensors of rank $m$.  If $P(x,y)$ is a homogeneous degree $m$ polynomial in two variables and $g\in\mathrm{GL}_2(\mathbb{C})$, then $\mathrm{Sym}^m(g)\in\mathrm{GL}_{m+1}(\mathbb{C})$ is the matrix giving the change in coefficients of $P$ under the change of variables by $g$.  Let $\varphi_v$ be the two-dimensional representation of the Deligne--Weil group attached to $\pi_v$ and $\mathrm{Sym}^m(\pi_v)$ be the smooth admissible representation of $\mathrm{GL}_{m+1}(F_v)$ attached to the representation $\mathrm{Sym}^m\circ \varphi_v$.  By the local Langlands correspondence, $\mathrm{Sym}^m(\pi_v)$ is well-defined for every place $v$ of $F$.

Langlands conjectured that $\mathrm{Sym}^m(\pi) = \bigotimes_v \mathrm{Sym}^m(\pi_v)$ is an automorphic representation of $\mathrm{GL}_{m+1}(\mathbb{A}_F)$, possibly not cuspidal.  This is known for $m\leq 4$ \cite{GJ,Kim,KimShahidi}.  When $F$ is totally real, much more is known for $\pi$ lying in the set
\[
\mathrm{RA}_{F}=\{\pi\in\mathfrak{F}_{2}\colon \textup{$\pi$ non-dihedral and regular algebraic}\}
\]
(see \cite[Section 7]{BLGHT}).  Such $\pi$ correspond with primitive non-CM holomorphic Hilbert cusp forms whose weights are integers at least 2.  When $F=\mathbb{Q}$, these are the classical non-CM holomorphic newforms.  Newton and Thorne \cite{NewtonThorne,NewtonThorne2,NewtonThorne3} proved that if $m\geq 0$, $F$ is totally real, and $\pi\in\mathrm{RA}_{F}$, then $\mathrm{Sym}^m(\pi)\in\mathfrak{F}_{m+1}$.  At least as early as \cite{CogdellMichel}, it has been known that if $\mathrm{Sym}^j(\pi)\in\mathfrak{F}_{j+1}$ for all $j\leq n+2$, then $L(s,\Sym^n(\pi))$ has no exceptional zero; see \cite{Thorner_SatoTate3}.

Given $\pi,\pi'\in\mathfrak{F}_{n}$, we write $\pi\sim\pi'$ if there exists $\psi\in\mathfrak{F}_{1}$ such that $\pi'=\pi\otimes\psi$.  Otherwise, we write $\pi\not\sim\pi'$.  Let $\mathbbm{1}\in\mathfrak{F}_{1}$ be the trivial representation, whose $L$-function is the Dedekind zeta function $\zeta_F(s)$.  If $\pi,\pi'\in\mathrm{RA}_F$ and $\pi\not\sim\pi'$, then it is unclear whether the cuspidal automorphy of the symmetric powers of $\pi$ and $\pi'$ suffices to ensure that the Rankin--Selberg $L$-function $L(s,\Sym^m(\pi)\times\Sym^{n}(\pi'))$ has no exceptional zero.  In particular, the approach to establishing zero-free regions without exceptional zeros in \cite{Banks,HoffsteinLockhart,HoffsteinRamakrishnan,Luo,RamakrishnanWang} fails without knowing the modularity of certain Rankin--Selberg $L$-functions beyond the reach of current methods.

To state our main result, we define
\begin{equation}
\label{eqn:star_def}
\mathfrak{F}_{n}^*=\{\pi\in\mathfrak{F}_{n}\colon \omega_{\pi}\textup{ is trivial on the diagonally embedded positive reals}\}.	
\end{equation}
For all $\pi\in\mathfrak{F}_{n}$, there exists a unique pair $(\pi^*,t_{\pi})\in\mathfrak{F}_{n}^*\times\mathbb{R}$ such that $\pi=\pi^*\otimes|\cdot|^{it_{\pi}}$.  It follows that $L(s,\pi)=L(s+it_{\pi},\pi^*)$.

\begin{theorem}
\label{thm:main}
Let $m,n\geq0$ be integers, $M=\max\{m,n\}\geq 1$, and $F$ be a totally real number field.  Let $\pi,\pi'\in\mathrm{RA}_F$ and $\chi=\chi^*|\cdot|^{it_{\chi}}\in\mathfrak{F}_{1}$.  Define
\begin{equation}
\label{eqn:y_def}
y_{\pi}=\begin{cases}
	0&\parbox{.55\textwidth}{if $\pi$ has squarefree conductor or $\pi$ corresponds with a non-CM elliptic curve over $\mathbb{Q}$,}\vspace{1mm}\\
	2&\mbox{otherwise,}
\end{cases}
\end{equation}
and similarly for $\pi'$.  Let $\chi^*$ correspond with a ray class character.  Define
\[
\mathcal{L}(s) =\begin{cases}
\displaystyle\frac{L(s,\mathrm{Sym}^m(\pi)\times(\mathrm{Sym}^n(\pi')\otimes\chi))}{L(s,\chi\omega_{\pi}^{m}\psi^{m})}&\mbox{if $m=n$, $\psi\in\mathfrak{F}_{1}$, and $\pi'=\pi\otimes\psi$,}\vspace{1mm}\\
L(s,\mathrm{Sym}^m(\pi)\times(\mathrm{Sym}^n(\pi')\otimes\chi))&\mbox{if $m\neq n$ or $\pi\not\sim\pi'$.}
\end{cases}
\]
There exists an absolute and effectively computable constant $\Cl[abcon]{main}>0$ such that
\[
\mathcal{L}(\sigma)\neq 0,\qquad \sigma\geq 1-\frac{\Cr{main}}{M^2[M^{\max\{y_{\pi},y_{\pi'}\}}\log(M C(\pi)C(\pi'))+\log C(\chi)]}.
\]
\end{theorem}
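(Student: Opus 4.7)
The plan is to combine Newton--Thorne's automorphy of symmetric powers with a positivity argument applied to an auxiliary isobaric $L$-function. Set $\Pi := \mathrm{Sym}^m(\pi) \in \mathfrak{F}_{m+1}$ and $\Pi' := \mathrm{Sym}^n(\pi') \in \mathfrak{F}_{n+1}$; these are cuspidal by \cite{NewtonThorne,NewtonThorne2,NewtonThorne3}, so $\mathcal{L}(s)$ is a well-defined ratio of genuine Rankin--Selberg $L$-functions in the Jacquet--Piatetski-Shapiro--Shalika sense, and Shahidi's non-vanishing on $\mathrm{Re}(s)=1$ applies directly.

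The argument splits into two cases. If $\Pi \otimes (\Pi' \otimes \chi)$ is \emph{not} essentially self-dual, then the zero-free region of the required shape follows from the non-self-dual zero-free region argument (\cite[Theorem A.1]{Humphries}; compare \cref{prop:list}(i) and (ix)), extended to Rankin--Selberg convolutions and made uniform in $m$, $n$, and $\chi$. In the remaining essentially self-dual case, form the isobaric automorphic representation
\[
\mathcal{I} := \Pi \boxplus \bigl(\widetilde{\Pi'} \otimes \chi^{-1}\bigr)
\]
on $\mathrm{GL}_{m+n+2}(\mathbb{A}_F)$, and observe the factorization
\[
L(s, \mathcal{I} \times \widetilde{\mathcal{I}}) = L(s, \Pi \times \widetilde{\Pi}) \cdot L(s, \Pi' \times \widetilde{\Pi'}) \cdot L(s, \Pi \times (\Pi' \otimes \chi)) \cdot L(s, \widetilde{\Pi} \times (\widetilde{\Pi'} \otimes \chi^{-1})).
\]
By Jacquet--Shalika the Dirichlet coefficients on the left are non-negative; the pole at $s=1$ has order two generically, and its higher order in the coincident case $\pi \sim \pi'$, $m=n$ is exactly cancelled by the Clebsch--Gordan abelian factor $L(s, \chi\omega_{\pi}^{m}\psi^{m})$ appearing in the definition of $\mathcal{L}(s)$.

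Suppose for contradiction that $\mathcal{L}(\beta)=0$ for some real $\beta$ very close to $1$. Then running a Hoffstein--Ramakrishnan-type positivity argument on $L(s, \mathcal{I} \times \widetilde{\mathcal{I}})$---i.e., combining the non-negativity of its Dirichlet coefficients with its controlled pole at $s=1$, the hypothetical zero at $\beta$ of the third factor (and of its conjugate fourth factor), and the standard convexity bound on $\mathrm{Re}(s)=1$---produces, via a contour integral or Landau-type inequality, a lower bound of the shape $1-\beta \gg 1/\log C(\mathcal{I} \times \widetilde{\mathcal{I}})$. Estimating this analytic conductor in terms of $C(\pi)$, $C(\pi')$, $C(\chi)$, and $M$ then converts the bound into the zero-free region of the theorem.

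The main obstacle is the quantitatively correct dependence of $\Cr{main}$ on $M=\max\{m,n\}$, namely the factors $M^{2}$ and $M^{\max\{y_{\pi},y_{\pi'}\}}$ in the denominator. The $M^{2}$ arises from the degree $(m+1)(n+1)$ of the Rankin--Selberg $L$-function, the unavoidable loss in the Hoffstein--Ramakrishnan positivity step (it is precisely the number of archimedean Langlands parameters of $\Pi \otimes (\Pi' \otimes \chi)$, each controlled by the Hilbert modular weights). The $M^{y_{\pi}}$ factor reflects the non-archimedean conductor of $\mathrm{Sym}^{m}(\pi_{v})$ at bad primes: when $\pi$ has squarefree level or corresponds to a non-CM elliptic curve, the local representations at ramified places are Steinberg or unramified principal series and the symmetric power has an $M$-independent conductor exponent, so $y_{\pi}=0$; otherwise one must fall back on the trivial polynomial-in-$M$ local bound, forcing $y_{\pi}=2$. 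Threading these sharp conductor estimates through the positivity argument while preserving an absolute, effective constant is the technically demanding step.
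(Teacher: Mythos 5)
Your construction does not eliminate the exceptional zero; it only re-proves what is already known. In your essentially self-dual case you take $\mathcal{I}=\Pi\boxplus(\widetilde{\Pi'}\otimes\chi^{-1})$ and apply positivity to $L(s,\mathcal{I}\times\widetilde{\mathcal{I}})$. Generically this function has a pole of order $2$ at $s=1$ (from $L(s,\Pi\times\widetilde{\Pi})$ and $L(s,\Pi'\times\widetilde{\Pi'})$), while a hypothetical real zero $\beta$ of $L(s,\Pi\times(\Pi'\otimes\chi))$ forces a zero of $L(s,\widetilde{\Pi}\times(\widetilde{\Pi'}\otimes\overline{\chi}))$ at the same point, i.e.\ a zero of your product of multiplicity exactly $2$. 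A Landau/Goldfeld--Hoffstein--Lieman argument (\cref{lem:GHL}) permits as many real zeros near $1$ as the order of the pole, so $2$ zeros against a pole of order $2$ yields no contradiction: one needs the target to occur with total multiplicity \emph{strictly greater} than the pole order ($\ell_1+\ell_2>k$ in the notation of \cref{sec:strategy}). Your setup gives $\ell_1+\ell_2=k=2$, so at best you recover the standard zero-free region with a possible exceptional zero. Overcoming exactly this obstruction is the point of the paper's auxiliary series $\mathcal{D}(s)$ in \eqref{eqn:D_def}, in which the target and its dual each appear to the fourth power while the pole has order only $3$; since $\mathcal{D}(s)$ is not known to be of the form $L(s,\Pi\times\widetilde{\Pi})$, non-negativity of its coefficients cannot be quoted from Jacquet--Shalika/Hoffstein--Ramakrishnan and is instead proved by hand via a Clebsch--Gordan perfect-square identity (\cref{lem:coeff}), and the pole order is pinned down by Rajan's theorem on $\ell$-adic representations (\cref{lem:Rajan_multiplicity_one}), which is where the hypothesis that $\chi^*$ is a ray class character — unused in your argument — is essential.

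Your other branch has the same underlying problem. You propose to dispose of the ``not essentially self-dual'' case by extending \cite[Theorem A.1]{Humphries} (or \cref{prop:list}(i)) ``to Rankin--Selberg convolutions,'' but that extension is precisely what is not available: those results apply to standard $L$-functions of automorphic representations, and for $m,n\geq 2$ the convolution $\mathrm{Sym}^m(\pi)\boxtimes(\mathrm{Sym}^n(\pi')\otimes\chi)$ is not known to be automorphic, so there is no cuspidal or isobaric object to which the non-self-dual argument can be applied. This is the modularity obstruction discussed in \cref{subsec:earlier_work}; even granting modularity, the resulting region would be too weak for the intended application. Finally, your claim that in the coincident case the extra pole is ``exactly cancelled'' by the single abelian factor $L(s,\chi\omega_{\pi}^m\psi^m)$ does not hold in your construction (the cross factors then contribute two extra poles, not one), and deciding when the cross factor has a pole at all when $\pi\not\sim\pi'$ again requires a Rajan-type input rather than being automatic. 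The conductor heuristics for the exponents $M^2$ and $M^{\max\{y_\pi,y_{\pi'}\}}$ are broadly in the spirit of \cref{lem:AC}, but they cannot rescue an argument whose central positivity step produces no contradiction.
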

\begin{remark}
Theorem \ref{thm:main} is new when $m+n\geq 4$. We restrict to totally real $F$ only to ensure the cuspidal automorphy of all symmetric powers of all $\pi\in\mathrm{RA}_F$ via \cite{NewtonThorne3}.  In light of \eqref{eqn:AC_def}, Theorem \ref{thm:main} is uniform in $F$.
\end{remark}
Our next result follows immediately from Theorem \ref{thm:main}, replacing $\chi$ with $\chi|\cdot|^{it}$.
\begin{corollary}
\label{cor:big_ZFR2}
Let $m,n\geq 0$ be integers, $M=\max\{m,n\}\geq 1$, $F$ be a totally real number field, and $\pi,\pi'\in\mathrm{RA}_F\cap\mathfrak{F}_{2}^*$.  Let $y_{\pi}$ and $y_{\pi'}$ be as in \eqref{eqn:y_def}.  Let $\chi\in\mathfrak{F}_{1}^*$ correspond with a ray class character.  If $\pi\not\sim\pi'$ and $t\in\mathbb{R}$, then $L(\sigma+it,\mathrm{Sym}^m(\pi)\times(\mathrm{Sym}^n(\pi')\otimes\chi))\neq 0$ when
\[
\sigma\geq 1-\frac{\Cr{main}}{M^2[M^{\max\{y_{\pi},y_{\pi'}\}}\log(M C(\pi)C(\pi'))+\log(C(\chi)(3+|t|)^{[F:\mathbb{Q}]})]}.
\]
\end{corollary}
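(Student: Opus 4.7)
The plan is to reduce directly to \cref{thm:main} by absorbing the imaginary shift $t$ into the character $\chi$.

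Examining the Euler product place by place, the Satake parameters of $\pi'\otimes|\cdot|^{it}$ at each unramified place $v$ are those of $\pi'$ multiplied by $q_v^{-it}$, so the unramified local Rankin--Selberg factors satisfy
\[
L_v(\sigma, \mathrm{Sym}^m(\pi)_v\times(\mathrm{Sym}^n(\pi')\otimes\chi|\cdot|^{it})_v) = L_v(\sigma+it,\mathrm{Sym}^m(\pi)_v\times(\mathrm{Sym}^n(\pi')\otimes\chi)_v).
\]
The same identity extends to the ramified and archimedean factors (the local Langlands parameters are shifted by unramified twists), yielding the global identity
\[
L(\sigma+it,\mathrm{Sym}^m(\pi)\times(\mathrm{Sym}^n(\pi')\otimes\chi)) = L(\sigma,\mathrm{Sym}^m(\pi)\times(\mathrm{Sym}^n(\pi')\otimes\chi')),
\]
where $\chi':=\chi\otimes|\cdot|^{it}\in\mathfrak{F}_{1}$.

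Next I would verify that \cref{thm:main} applies to the right-hand side with $\chi$ replaced by $\chi'$. Since $\chi\in\mathfrak{F}_{1}^*$ we have $t_{\chi}=0$ and $\chi^*=\chi$, so uniqueness of the canonical decomposition forces $(\chi')^*=\chi$ and $t_{\chi'}=t$. In particular $(\chi')^*$ still corresponds with a ray class character. Because $\pi\not\sim\pi'$, we are in the second case of the definition of $\mathcal{L}(s)$ in \cref{thm:main}, so $\mathcal{L}(s)$ is the full Rankin--Selberg $L$-function without any divisor removed.

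Finally, I would bound $C(\chi')$ in terms of $C(\chi)$ and $|t|$. Only the archimedean local factors of $L(s,\chi')$ differ from those of $L(s,\chi)$, and a purely imaginary twist by $|\cdot|^{it}$ shifts each archimedean local parameter by $t$, enlarging each corresponding factor in the analytic conductor by at most $O(3+|t|)$. Since the number of archimedean local parameters counted in $C(\cdot)$ for a $\mathrm{GL}_1$ representation equals $[F:\mathbb{Q}]$, this yields $C(\chi')\ll C(\chi)(3+|t|)^{[F:\mathbb{Q}]}$, hence $\log C(\chi') \ll \log(C(\chi)(3+|t|)^{[F:\mathbb{Q}]})$. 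Inserting this into the zero-free region from \cref{thm:main} and absorbing the implied constant into $\Cr{main}$ gives the stated bound. The entire argument is a straightforward reformulation of \cref{thm:main} along a vertical line, with no substantive obstacle.
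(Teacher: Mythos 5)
Your argument is correct and is essentially the paper's own proof: the corollary is deduced from \cref{thm:main} by replacing $\chi$ with $\chi\otimes|\cdot|^{it}$, whose finite part is still the ray class character $\chi$, and bounding its analytic conductor by $C(\chi)(3+|t|)^{[F:\mathbb{Q}]}$. As a minor simplification, the elementary inequality $|\mu+it|+3\leq(|\mu|+3)(3+|t|)$ gives $C(\chi\otimes|\cdot|^{it})\leq C(\chi)(3+|t|)^{[F:\mathbb{Q}]}$ exactly, so no implied constant needs to be absorbed into $\Cr{main}$.
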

\begin{remark}
Our proof shows that if $m=0$, then $C(\pi)$ can be replaced by $1$ and $y_{\pi}$ by $0$.  If $n=0$, then $C(\pi')$ can be replaced by $1$ and $y_{\pi'}$ by $0$.
\end{remark}

Theorem \ref{thm:main} relies on the careful construction of an certain auxiliary Dirichlet series with non-negative Dirichlet coefficients.  The usual method for constructing such Dirichlet series, by taking the Rankin--Selberg convolution of a carefully chosen isobaric automorphic representation with its dual, underlies all of the proofs in \cite{Banks,HoffsteinLockhart,HoffsteinRamakrishnan,Luo,RamakrishnanWang}.  This method is not available for the proof of Theorem \ref{thm:main} in the most important case, when $\pi\not\sim\pi'$, because of insufficient progress towards the modularity of Rankin--Selberg $L$-functions (see Section \ref{subsec:earlier_work}).  The novelty in our proofs, therefore, is the construction of the auxiliary Dirichlet series in \eqref{eqn:D_def} and the unconditional verification of its useful analytic properties.


\subsection*{Acknowledgements}

The author thanks Farrell Brumley, Gergely Harcos, and Jeffrey Hoffstein for helpful conversations and Laurent Clozel for providing the strategy for Lemma \ref{lem:Rajan_multiplicity_one}.  The author is partially funded by the Simons Foundation.

\section{Application to Sato--Tate and joint Sato--Tate distributions}
\label{sec:ST}

Let $F$ be a totally real number field.  Let $v\nmid\infty$ denote a non-archimedean place of $F$ and $v\mid\infty$ otherwise.  Let $\pi=\bigotimes_v \pi_v\in\mathrm{RA}_{F}\cap \mathfrak{F}_{2}^*$, and let $S_{\pi}=\{v\nmid\infty\colon\textup{$\pi_v$ ramified}\}$.  Since $\pi\in\mathrm{RA}_{F}$, the central character $\omega_{\pi}=\bigotimes_v \omega_{\pi,v}$ corresponds with a ray class character over $F$.  For $v\nmid\infty$, let $q_v$ be the cardinality of the residue field of the local ring of integers $\mathcal{O}_v\subseteq F_v$ and $\varpi_v$ be a uniformizer.  Fix a root of unity $\zeta$ such that $\zeta^2\in\mathrm{im}(\omega_{\pi})$.  Let $v\nmid\infty$ satisfy $v\notin S_{\pi}$ and $\omega_{\pi,v}(\varpi_v)=\zeta^2$.  Let $a_{\pi}(v)$ be the eigenvalue of the Hecke operator
\[
\Big[\mathrm{GL}_2(\mathcal{O}_{F_v})\Big(\begin{matrix}\varpi_v&0\\0&1\end{matrix}\Big)\mathrm{GL}_2(\mathcal{O}_{F_v})\Big]
\]
on $\pi_v^{\mathrm{GL}_2(\mathcal{O}_{F_v})}$.  Since $\mathrm{Sym}^m(\pi)\in\mathfrak{F}_{m+1}$ for all $m\geq 0$ \cite{NewtonThorne3}, we have that
\[
\frac{a_{\pi}(v)}{2\zeta}\in[-1,1].
\]

The Sato--Tate conjecture for $\pi$ states that if $\mathrm{d}\mu_{\mathrm{ST}}=(2/\pi)\sqrt{1-t^2}~\mathrm{d}t$ is the Sato--Tate measure, $I\subseteq[-1,1]$ is a fixed subinterval, and $\mathbf{1}_{I}(t)$ is the indicator function of $I$, then
\begin{equation}
\label{eqn:ST1}
\lim_{x\to\infty}\sum_{\substack{v\notin S_{\pi},~q_v\leq x \\ \omega_{\pi,v}(\varpi_v)=\zeta^2}}\mathbf{1}_I\Big(\frac{a_{\pi}(v)}{2\zeta}\Big)\Big\slash\sum_{\substack{v\notin S_{\pi},~q_v\leq x \\ \omega_{\pi,v}(\varpi_v)=\zeta^2}}1=\mu_{\mathrm{ST}}(I).
\end{equation}
This was proved when $F=\mathbb{Q}$ by Barnet-Lamb, Geraghty, Harris, and Taylor \cite{BLGHT} and for all totally real $F$ by Barnet-Lamb, Gee, and Geraghty \cite{BLGG}.


Let $\pi,\pi'\in\mathrm{RA}_{F}\cap\mathfrak{F}_{2}^*$.  Let $I,I'\subseteq[-1,1]$ be fixed subintervals, and let $\zeta$ (resp. $\zeta'$) be a fixed root of unity such that $\zeta^2\in\mathrm{im}(\omega_{\pi})$ (resp. ${\zeta'}^2\in\mathrm{im}(\omega_{\pi'})$).  Generalizing a question of Katz and Mazur originally posed for non-CM elliptic curves, one might ask whether
\begin{equation}
\label{eqn:ST2}
\lim_{x\to\infty}\sum_{\substack{v\notin S_{\pi}\cup S_{\pi'},~q_v\leq x \\ \omega_{\pi,v}(\varpi_v)=\zeta^2 \\ \omega_{\pi',v}(\varpi_v)=\zeta'^2 }}\mathbf{1}_I\Big(\frac{a_{\pi}(v)}{2\zeta}\Big)\mathbf{1}_{I'}\Big(\frac{a_{\pi'}(v)}{2\zeta'}\Big)\Big\slash\sum_{\substack{v\notin S_{\pi}\cup S_{\pi'},~q_v\leq x \\ \omega_{\pi,v}(\varpi_v)=\zeta^2 \\ \omega_{\pi',v}(\varpi_v)=\zeta'^2}}1 = \mu_{\mathrm{ST}}(I)\mu_{\mathrm{ST}}(I').
\end{equation}
Building on work of Harris \cite{Harris}, Wong \cite{Wong} proved \eqref{eqn:ST2} when $\omega_{\pi}=\omega_{\pi'}=\mathbbm{1}$ and $\pi\not\sim\pi'$.  Using Theorem \ref{thm:main}, we prove variants of \eqref{eqn:ST1} and \eqref{eqn:ST2} with effective rates of convergence.
\begin{theorem}
\label{thm:ST}
Let $F$ be a totally real number field and $\pi,\pi'\in\mathrm{RA}_{F}\cap\mathfrak{F}_{2}^*$.  Let $\zeta$ (resp. $\zeta'$) be a fixed root of unity such that $\zeta^2$ (resp. $\zeta'^2$) lies in the image of $\omega_{\pi}$ (resp. $\omega_{\pi'}$).  Let $S_{\pi}$ (resp. $S_{\pi'}$) be the set of places $v\nmid\infty$ of $F$ such that $\pi_v$ (resp. $\pi_v'$) is ramified.  Let $y_{\pi}$ and $y_{\pi'}$ be as in \eqref{eqn:y_def}.  There exists an absolute, effectively computable constant $\Cl[abcon]{ST1}>0$ such that the following results are true.
\begin{enumerate}
	\item If $I\subseteq[-1,1]$ is a closed subinterval and $x\geq 2$, then
\begin{align*}
\Big|\sum_{\substack{v\notin S_{\pi},~q_v\leq x \\ \omega_{\pi,v}(\varpi_v)=\zeta^2}}\mathbf{1}_I\Big(\frac{a_{\pi}(v)}{2\zeta}\Big)\Big\slash\sum_{\substack{v\notin S_{\pi},~q_v\leq x \\ \omega_{\pi,v}(\varpi_v)=\zeta^2 }}1-\mu_{\mathrm{ST}}(I)\Big|\leq \Cr{ST1} \sqrt{[F:\mathbb{Q}]}\Big(\frac{\log(C(\pi)\log x)}{\sqrt{\log x}}\Big)^{\frac{2}{2+y_{\pi}}}.
\end{align*}
\item Assume that $\pi\not\sim\pi'$.  Let $I,I'\subseteq[-1,1]$ be closed subintervals and $x\geq 2$.  If there are infinitely many $v\nmid\infty$ such that $\omega_{\pi,v}(\varpi_v)=\zeta^2$ and $\omega_{\pi',v}(\varpi_v)=\zeta'^2$, then
\begin{align*}
&\Big|\sum_{\substack{v\notin S_{\pi}\cup S_{\pi'},~q_v\leq x \\ \omega_{\pi,v}(\varpi_v)=\zeta^2 \\ \omega_{\pi',v}(\varpi_v)=\zeta'^2}}\mathbf{1}_I\Big(\frac{a_{\pi}(v)}{2\zeta}\Big)\mathbf{1}_{I'}\Big(\frac{a_{\pi'}(v)}{2\zeta'}\Big)\Big\slash\sum_{\substack{v\notin S_{\pi}\cup S_{\pi'},~q_v\leq x \\ \omega_{\pi,v}(\varpi_v)=\zeta^2 \\ \omega_{\pi',v}(\varpi_v)=\zeta'^2}}1-\mu_{\mathrm{ST}}(I)\mu_{\mathrm{ST}}(I')\Big|\\
&\leq \Cr{ST1} \sqrt{[F:\mathbb{Q}]}\Big(\frac{\log(C(\pi)C(\pi')\log x)}{\sqrt{\log x}}\Big)^{\frac{2}{2+\max\{y_{\pi},y_{\pi'}\}}}.
\end{align*}
\end{enumerate}
\end{theorem}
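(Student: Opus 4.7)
I would proceed via the standard Beurling--Selberg--Chebyshev method, with the zero-free regions of \cref{thm:main}/\cref{cor:big_ZFR2} serving as the essential arithmetic input. The $\mathrm{SU}(2)$-characters $U_k(\cos\theta)=\sin((k+1)\theta)/\sin\theta$ form an orthonormal basis of $L^2([-1,1],\mathrm{d}\mu_{\mathrm{ST}})$ and satisfy $U_k(a_\pi(v)/(2\zeta))=\zeta^{-k}a_{\mathrm{Sym}^k(\pi)}(v)$ at unramified $v$ with $\omega_{\pi,v}(\varpi_v)=\zeta^2$. Using Vaaler's construction in this basis, sandwich $\mathbf{1}_I$ between polynomials $P_M^{\pm}(t)=\sum_{k=0}^{M}\widehat{p}_{M,k}^{\pm}U_k(t)$ with $\int(P_M^{+}-P_M^{-})\,\mathrm{d}\mu_{\mathrm{ST}}=\mathrm{O}(1/M)$ and $|\widehat{p}_{M,k}^{\pm}|\ll 1/(1+k)$. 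For part (2), use the product $P_M^{\pm}(s)P_N^{\pm}(t)$ together with $U_m(a_\pi(v)/(2\zeta))U_n(a_{\pi'}(v)/(2\zeta'))=\zeta^{-m}{\zeta'}^{-n}a_{\mathrm{Sym}^m(\pi)\times\mathrm{Sym}^n(\pi')}(v)$, valid at places unramified for both $\pi$ and $\pi'$.

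\textbf{Detecting the central-character conditions.} The constraints $\omega_{\pi,v}(\varpi_v)=\zeta^2$ and $\omega_{\pi',v}(\varpi_v)={\zeta'}^2$ factor through a finite ray class group, so they are detected by orthogonality over a finite family of ray class characters $\chi$. After this Fourier inversion, each counting problem reduces to an effective asymptotic evaluation of
\[
\sum_{q_v\leq x}a_{\mathrm{Sym}^m(\pi)\times(\mathrm{Sym}^n(\pi')\otimes\chi)}(v)\log q_v,\qquad 0\leq m,n\leq M,
\]
with $n=0$ and $\pi'=\mathbbm{1}$ in part (1). In part (2) the hypothesis $\pi\not\sim\pi'$ guarantees that the twist-inequivalence assumption of \cref{thm:main} is satisfied for every $(m,n)$ with $m+n\geq 1$, so the zero-free region is available uniformly across the relevant range.

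\textbf{Effective PNT from the zero-free region.} For each such $L$-function, apply \cref{cor:big_ZFR2}, which (with the convention $C(\pi)\mapsto 1$ when $m=0$ and symmetrically for $n=0$) gives zero-free region width $\gg[M^{2+y}\log(MC(\pi)C(\pi'))+M^2\log(C(\chi)(3+|t|)^{[F:\mathbb{Q}]})]^{-1}$, where $y=\max\{y_\pi,y_{\pi'}\}$. A truncated Perron formula combined with a contour shift to this boundary---using standard convexity bounds for $\log L$ on the critical strip together with the absence of an exceptional zero---yields
\[
\sum_{q_v\leq x}a_{\mathrm{Sym}^m(\pi)\times(\mathrm{Sym}^n(\pi')\otimes\chi)}(v)\log q_v\ll x\exp\Bigl(-c\sqrt{\tfrac{\log x}{M^{2+y}\log(MC(\pi)C(\pi')C(\chi))}}\,\Bigr),
\]
with the $\sqrt{[F:\mathbb{Q}]}$ factor in the final bound absorbed from the vertical $t$-integration against the archimedean Gamma factors.

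\textbf{Optimization and main obstacle.} The main term $\asymp x/\log x$ in the denominator of the Sato--Tate ratio arises from the $(m,n,\chi)=(0,0,\mathbbm{1})$ contribution through the simple pole of $\zeta_F(s)$. Summing the Vaaler expansion with $\sum_k|\widehat{p}_{M,k}^{\pm}|\ll\log M$, the relative total error decomposes as $\mathrm{O}(1/M)$ from truncation plus a PNT contribution that becomes negligible for $M\asymp(\sqrt{\log x}/\log(C(\pi)C(\pi')\log x))^{2/(2+y)}$. This choice balances the two sources and recovers exactly the stated bound $\ll\sqrt{[F:\mathbb{Q}]}\,(\log(C(\pi)C(\pi')\log x)/\sqrt{\log x})^{2/(2+y)}$. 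I expect the principal technical obstacle to lie in the uniformity of the PNT simultaneously in $M$, $|t|$, and $C(\chi)$: pushing the contour past the zero-free boundary demands a Jensen-type zero-count in small discs near $s=1$ whose cost depends only polynomially on $M$, and it is precisely the $M^2$ factor in the denominator of \cref{thm:main}'s zero-free region that keeps this book-keeping admissible after the balance above.
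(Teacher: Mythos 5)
Your outline is in fact the same route the paper takes (Chebyshev--Selberg majorants, detection of the central-character conditions by a finite family of ray class characters, a PNT driven by \cref{cor:big_ZFR2}, then optimization in $M$), but the quantitative core as you state it does not deliver the claimed rate. Write $\mathcal{Q}=C(\pi)C(\pi')$. The PNT error you assert, $x\exp\bigl(-c\sqrt{\log x/(M^{2+y}\log(M\mathcal{Q}C(\chi)))}\bigr)$, is both weaker than what \cref{cor:big_ZFR2} actually gives and too weak for your optimization: with your choice $M\asymp(\sqrt{\log x}/\log(\mathcal{Q}\log x))^{2/(2+y)}$ one has $M^{2+y}\log(M\mathcal{Q}C(\chi))\asymp\log x/\log(\mathcal{Q}\log x)$, so your exponent is only $\asymp\sqrt{\log(\mathcal{Q}\log x)}$ and the saving $\exp(-c'\sqrt{\log(\mathcal{Q}\log x)})$ is not even a fixed power of $(\mathcal{Q}\log x)^{-1}$. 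This is fatal, because the denominator $\sum_{v\in\mathcal{V},\,q_v\le x}1$ can only be bounded below effectively with the loss of a fixed power of $\mathcal{Q}$ and only for $x$ above a threshold (\cref{lem:CDT}, proved via an effective Chebotarev-type bound plus a bound on $|\mathrm{im}(\omega_{\pi})|$); your sketch omits this step entirely, and extracting the denominator from the $(m,n,\chi)=(0,0,\mathbbm{1})$ term alone ignores the nontrivial $\chi$ in the group generated by $\omega_{\pi},\omega_{\pi'}$ at $(m,n)=(0,0)$, where $L(s,\chi)$ may have a Landau--Siegel zero and \cref{thm:main} (which requires $\max\{m,n\}\ge1$) gives nothing. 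The key point you miss is that in \cref{cor:big_ZFR2} the $t$-aspect enters only through $M^2[F:\mathbb{Q}]\log(3+|t|)$, not multiplied by $M^{2+Y}\log(M\mathcal{Q})$; the standard contour/truncation balance then yields the stronger, min-type shape $x\exp\bigl(-c\log x/(M^{2+Y}\log(\mathcal{Q}C(\chi)M)+M\sqrt{[F:\mathbb{Q}]\log x})\bigr)$ as in \cref{prop:PNT}, and it is this exponent, evaluated at the choice \eqref{eqn:M_def} whose constant is calibrated against the $\mathcal{Q}^{O(1)}$ loss of \cref{lem:CDT}, that makes \cref{cor:PNT} beat the denominator and recover the stated bound. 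With the intermediate bound you state, the argument only yields the theorem with an extra factor of roughly $(\log(\mathcal{Q}\log x))^{1/(2+y)}$.

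A secondary gap: in part (2) the two-dimensional minorant is not simply $P_M^-(s)P_N^-(t)$. The one-dimensional minorants take negative values, so their product need not lie below $\mathbf{1}_I(s)\mathbf{1}_{I'}(t)$ (the product of the nonnegative majorants is fine). The paper uses Cochrane's genuinely two-dimensional construction (\cref{lem:ET1}); alternatively the combination $P_M^-Q_N^+ + P_M^+Q_N^- - P_M^+Q_N^+$ is a valid minorant with coefficients of the same decay. This slip is repairable, but as written your sandwiching step is not correct.
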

\begin{remark}
In \cite[Theorem 1.1]{Thorner_SatoTate3}, the author proved Theorem \ref{thm:ST}(1) when $F=\mathbb{Q}$ and $\omega_{\pi}=\mathbbm{1}$.  Theorem \ref{thm:ST}(1) is new in all other cases.  Theorem \ref{thm:ST}(2) is new in all cases; in \cite[Theorem 1.2]{Thorner_SatoTate3}, where $F=\mathbb{Q}$ and $\omega_{\pi}=\omega_{\pi'}=\mathbbm{1}$, the author proved a version of Theorem \ref{thm:ST}(2) with an error term that only decays as a power of $\log\log x$ because of the potential existence of exceptional zeros.  These are now precluded by Theorem \ref{thm:main}.  Throughout \cite{Thorner_SatoTate3}, it was assumed that $y_{\pi}=y_{\pi'}=0$.

As in Theorem \ref{thm:main}, we restrict to totally real $F$ only to ensure the cuspidal automorphy of all symmetric powers of all $\pi\in\mathrm{RA}_F$ via \cite{NewtonThorne3}.  By our definition for $C(\pi)$ and $C(\pi')$ in \eqref{eqn:AC_def}, Theorem \ref{thm:ST} is uniform in  $F$.
\end{remark}
We will only detail the proof of Theorem \ref{thm:ST}(2), since Theorem \ref{thm:ST}(1) is easier to prove.

\section{Analytic properties of $L$-functions}
\label{sec:Properties}

Let $F$ be a number field with absolute discriminant $D_F$.  For each place $v$ of $F$, we write $v\mid\infty$ (resp. $v\nmid \infty$) if $v$ is archimedean (resp. non-archimedean).  If $v\nmid\infty$, then $q_v$ is the cardinality of the residue field of the local ring of integers $\mathcal{O}_v\subseteq F_v$, and $\varpi_v$ is the uniformizer.  The properties of $L$-functions given here rely on \cite{GodementJacquet,JPSS,MoeglinWaldspurger}.  In our use of $f=O(g)$ or $f\ll g$, the implied constant is always absolute and effectively computable.

Let $\pi\in\mathfrak{F}_{n}$.  Recall $\mathfrak{F}_{n}^*$ in \eqref{eqn:star_def}.  There exists a unique pair $(\pi^*,t_{\pi})\in\mathfrak{F}_{n}^*\times\mathbb{R}$ such that $\pi=\pi^*\otimes|\cdot|^{it_{\pi}}$, in which case $L(s,\pi)=L(s+it_{\pi},\pi^*)$.  Therefore, in our discussion of standard $L$-functions and Rankin--Selberg $L$-functions, it suffices to restrict to $\mathfrak{F}_{n}^*$.

\subsection{Standard $L$-functions}
\label{subsec:standard}

Let $\pi\in\mathfrak{F}_{n}^*$, let $\widetilde{\pi}\in\mathfrak{F}_{n}^*$ be the contragredient, and let $\omega_{\pi}$ the central character.  We express $\pi$ as a restricted tensor product $\bigotimes_v \pi_v$ of smooth admissible representations of $\mathrm{GL}_n(F_v)$.  Let $\delta_{\pi}=1$ if $\pi=\mathbbm{1}$ and $\delta_{\pi}=0$ otherwise.  Let
\[
S_{\pi}=\{v\nmid\infty\colon \textup{$\pi_v$ ramified}\},\qquad S_{\pi}^{\infty}=S_{\pi}\cup\{v\mid\infty\}.
\]
Let $N_{\pi}$ be the norm of the conductor of $\pi$.  If $v\nmid\infty$, then there are $n$ Satake parameters $(\alpha_{j,\pi}(v))_{j=1}^n$ such that
	\[
	L(s,\pi)=\prod_{v\nmid\infty}L(s,\pi_{v}),\qquad L(s,\pi_{v}) = \prod_{j=1}^{n}\frac{1}{1-\alpha_{j,\pi}(v)q_v^{-s}}
	\]
	converges absolutely for $\mathrm{Re}(s)>1$.  If $v\in S_{\pi}$, then at least one of the $\alpha_{j,\pi}(p)$ equals zero.
	
If $v\mid\infty$, then $(\mu_{j,\pi}(v))_{j=1}^n$ are the Langlands parameters at $v$, from which we define
\[
L(s,\pi_{\infty}) = \prod_{v\mid\infty}L(s,\pi_v) = \prod_{v\mid\infty}\prod_{j=1}^n \Gamma_v(s+\mu_{j,\pi}(v)),\qquad \Gamma_v(s)=\begin{cases}
	\pi^{-s/2}\Gamma(s/2)&\mbox{if $F_v=\mathbb{R}$,}\\
	2(2\pi)^{-s}\Gamma(s)&\mbox{if $F_v=\mathbb{C}$.}
	\end{cases}
	\]
	The completed $L$-function $\Lambda(s,\pi)=(s(1-s))^{\delta_{\pi}}(D_F^n N_{\pi})^{\frac{s}{2}}L(s,\pi)L(s,\pi_{\infty})$ is entire of order $1$.  Since, $\{\alpha_{j,\widetilde{\pi}}(v)\}=\{\overline{\alpha_{j,\pi}(v)}\}$, $N_{\widetilde{\pi}}=N_{\pi}$, and $\{\mu_{j,\widetilde{\pi}}(v)\}=\{\overline{\mu_{j,\pi}(v)}\}$, we have that $L(s,\widetilde{\pi})=\overline{L(\overline{s},\pi)}$.  The analytic conductor is
\begin{equation}
\label{eqn:AC_def}
C(\pi)=D_F^n N_{\pi} \prod_{v\mid\infty}\prod_{j=1}^n (|\mu_{j,\pi}(v)|+3)^{[F_v:\mathbb{R}]}.
\end{equation}
By \cite{LRS,MullerSpeh} there exists $\theta_n\in[0,1/2-1/(n^2+1)]$ such that
\begin{equation}
\label{eqn:Ramanujan1}
|\alpha_{j,\pi}(v)|\leq q_v^{\theta_{n}},\qquad \mathrm{Re}(\mu_{j,\pi}(v))\geq -\theta_{n}.
\end{equation}
We define $a_{\pi}(v^{\ell})$ by the Dirichlet series identity
\[
-\frac{L'}{L}(s,\pi)=\sum_{v}\sum_{\ell=1}^{\infty}\frac{\sum_{j=1}^n \alpha_{j,\pi}(v)^{\ell}\log q_v}{q_v^{\ell s}}=\sum_{v\nmid\infty}\sum_{\ell=1}^{\infty}\frac{a_{\pi}(v^{\ell})\log q_v}{q_v^{\ell s}}.
\]

\subsection{Rankin--Selberg $L$-functions}
\label{subsec:RS}

Let $\pi\in\mathfrak{F}_{n}^*$ and $\pi'\in\mathfrak{F}_{n'}^*$.  Let $\delta_{\pi\times\pi'}=1$ if $\pi'=\widetilde{\pi}$ and $\delta_{\pi\times\pi'}=0$ otherwise.  For each $v\notin S_{\pi}^{\infty}\cup S_{\pi'}^{\infty}$, define
\[
L(s,\pi_{v}\times\pi_{v}')=\prod_{j=1}^n \prod_{j'=1}^{n'}\frac{1}{1-\alpha_{j,\pi}(v)\alpha_{j',\pi'}(v)q_v^{-s}}.
\]
Jaquet, Piatetski-Shapiro, and Shalika proved the following theorem.

\begin{theorem}\cite{JPSS}
\label{thm:JPSS}
If $(\pi,\pi')\in\mathfrak{F}_{n}^*\times\mathfrak{F}_{n'}^*$, then there exist
\begin{enumerate}
\item complex numbers $(\alpha_{j,j',\pi\times\pi'}(v))_{j=1}^n{}_{j'=1}^{n'}$ for each $v\in S_{\pi}\cup S_{\pi'}$, from which we define
	\begin{align*}
	L(s,\pi_v\times\pi_v') = \prod_{j=1}^n \prod_{j'=1}^{n'}\frac{1}{1-\alpha_{j,j',\pi\times\pi'}(v)q_v^{-s}},\quad L(s,\widetilde{\pi}_v\times\widetilde{\pi}_v') = \prod_{j=1}^n \prod_{j'=1}^{n'}\frac{1}{1-\overline{\alpha_{j,j',\pi\times\pi'}(v)}q_v^{-s}};
	\end{align*}
\item complex numbers $(\mu_{j,j',\pi\times\pi'}(v))_{j=1}^{n}{}_{j'=1}^{n'}$ for each $v\mid\infty$, from which we define
	\begin{align*}
	L(s,\pi_{v}\times\pi_{v}') = \prod_{j=1}^n \prod_{j'=1}^{n'}\Gamma_v(s+\mu_{j,j',\pi\times\pi'}(v)),\quad L(s,\widetilde{\pi}_{v}\times\widetilde{\pi}_{v}') = \prod_{j=1}^n \prod_{j'=1}^{n'}\Gamma_v(s+\overline{\mu_{j,j',\pi\times\pi'}(v)});
	\end{align*}
\item a conductor, an integral ideal whose norm is denoted $N_{\pi\times\pi'}=N_{\widetilde{\pi}\times\widetilde{\pi}'}$; and
\item a complex number $W(\pi\times\pi')$ of modulus $1$
\end{enumerate}
such that the Rankin--Selberg $L$-functions
\[
L(s,\pi\times\pi')=\prod_{v\nmid\infty}L(s,\pi_v\times\pi_v'),\qquad L(s,\widetilde{\pi}\times\widetilde{\pi}')=\prod_{v\nmid\infty}L(s,\widetilde{\pi}_v\times\widetilde{\pi}_v')
\]
converge absolutely for $\mathrm{Re}(s)>1$, the completed $L$-functions
\begin{align*}
\Lambda(s,\pi\times\pi') &= (s(1-s))^{\delta_{\pi\times\pi'}} (D_F^{n'n}N_{\pi\times\pi'})^{\frac{s}{2}} L(s,\pi\times\pi')\prod_{v\mid\infty}L(s,\pi_v\times\pi_{v}')\\
\Lambda(s,\widetilde{\pi}\times\widetilde{\pi}') &= (s(1-s))^{\delta_{\pi\times\pi'}} (D_F^{n'n}N_{\pi\times\pi'})^{\frac{s}{2}}L(s,\widetilde{\pi}\times\widetilde{\pi}')\prod_{v\mid\infty}L(s,\widetilde{\pi}_{v}\times\widetilde{\pi}_{v}')
\end{align*}
are entire of order $1$, and $\Lambda(s,\pi\times\pi')=W(\pi\times\pi')\Lambda(1-s,\widetilde{\pi}\times\widetilde{\pi}')$.
\end{theorem}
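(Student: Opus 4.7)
The plan is to follow the integral representation strategy of Jacquet, Piatetski--Shapiro, and Shalika, producing all of the local and global data simultaneously from families of Rankin--Selberg zeta integrals and deducing the analytic properties from a global functional equation.

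At each place $v$, assuming without loss of generality that $n\geq n'$ and fixing a non-trivial additive character $\psi_v$ of $F_v$, I would introduce for Whittaker functions $W\in\mathcal{W}(\pi_v,\psi_v)$ and $W'\in\mathcal{W}(\pi_v',\overline{\psi_v})$ the local zeta integrals
\[
\Psi(s,W,W',\Phi)=\int_{N_{n'}(F_v)\backslash\mathrm{GL}_{n'}(F_v)} W\twobytwo{g}{0}{0}{I_{n-n'}}W'(g)\Phi(e_{n'}g)|\det g|^{s}\,dg
\]
(with an obvious modification when $n=n'$ that incorporates a Schwartz--Bruhat function $\Phi$, and without $\Phi$ when $n>n'$). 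The key local facts to establish are: (a) absolute convergence for $\mathrm{Re}(s)$ sufficiently large; (b) rationality in $q_v^{-s}$ at finite places, extracted from the asymptotic expansion of Whittaker functions along the torus via the theory of Jacquet modules; and (c) that the family of these integrals spans a fractional ideal in $\mathbb{C}[q_v^{s},q_v^{-s}]$ whose canonical generator I declare to be $L(s,\pi_v\times\pi_v')$. This forces $\Psi/L$ to be a Laurent polynomial in $q_v^{-s}$, producing the Satake-parameter Euler factor at unramified places through the Casselman--Shalika formula, and producing the complex parameters $\alpha_{j,j',\pi\times\pi'}(v)$ of items (1)--(2) at the remaining finite places. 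A parallel analysis at archimedean places uses Stade-type recursions and the decay of Whittaker functions on $\mathrm{GL}_n(\mathbb{R})$ and $\mathrm{GL}_n(\mathbb{C})$ to yield the gamma factor $L(s,\pi_v\times\pi_v')$ together with the parameters $\mu_{j,j',\pi\times\pi'}(v)$. I would then prove a local functional equation
\[
\Psi(1-s,\widetilde{W},\widetilde{W}',\widehat{\Phi})=\gamma_v(s,\pi_v\times\pi_v',\psi_v)\,\Psi(s,W,W',\Phi)
\]
and define the local epsilon factor by $\varepsilon_v:=\gamma_v\cdot L(s,\pi_v\times\pi_v')/L(1-s,\widetilde{\pi}_v\times\widetilde{\pi}_v')$. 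At almost every $v$ one checks $\varepsilon_v\equiv 1$, so the global root number $W(\pi\times\pi')=\prod_v\varepsilon_v$ and the conductor ideal, whose norm is $N_{\pi\times\pi'}$, are both well-defined, with $N_{\pi\times\pi'}$ read off from the non-trivial local epsilon factors.

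For the global analytic continuation and functional equation I would take a cusp form $\varphi$ in the space of $\pi$, a cusp form (respectively, automorphic form) $\varphi'$ in the space of $\pi'$ when $n=n'$ (respectively, $n>n'$), and, in the $n=n'$ case, an Eisenstein series $E(g,s,\Phi)$ built from a Schwartz--Bruhat function on $\mathbb{A}_F^n$. Integrating $\varphi(g)\varphi'(g)E(g,s,\Phi)$ over $\mathrm{GL}_n(F)\backslash\mathrm{GL}_n(\mathbb{A}_F)$ (or the analogous lower-rank embedded integral when $n>n'$) and unfolding along the Bruhat decomposition, I would expose the Fourier--Whittaker coefficients of $\varphi$ and $\varphi'$ and obtain an Eulerian factorization into the local integrals $\Psi$. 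This transfers the meromorphic continuation of $E(g,s,\Phi)$ in $s$ to the full global $L$-function and converts the known functional equation $E(g,s,\Phi)\mapsto E(g,1-s,\widehat{\Phi})$, through the local identities, into the global identity $\Lambda(s,\pi\times\pi')=W(\pi\times\pi')\Lambda(1-s,\widetilde{\pi}\times\widetilde{\pi}')$, with the discriminant and conductor contributions assembling into the factor $(D_F^{n'n}N_{\pi\times\pi'})^{s/2}$.

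Finally I would pin down the pole structure and order of growth. By the theory of Eisenstein series, the only pole of $E(g,s,\Phi)$ with $0\leq\mathrm{Re}(s)\leq 1$ is a simple pole at $s=1$ whose residue is a constant multiple of $\Phi(0)$; cuspidality of $\varphi$ and $\varphi'$ forces the resulting residue of the global integral to vanish unless the inner-product pairing is non-degenerate, which occurs exactly when $\pi'\cong\widetilde{\pi}$, so that the factor $(s(1-s))^{\delta_{\pi\times\pi'}}$ in $\Lambda$ absorbs all poles and yields an entire function. Order one growth in vertical strips follows from absolute convergence for $\mathrm{Re}(s)>1$, the functional equation, and a Phragm\'en--Lindel\"of argument combined with Stirling asymptotics for the archimedean factors. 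The main obstacle I expect is the archimedean local theory: showing that the family of zeta integrals has a well-defined greatest common divisor in the ring generated by finite products of shifted gamma functions, that this gcd agrees with the product of $\Gamma_v$-factors built from the Langlands parameters $\mu_{j,j',\pi\times\pi'}(v)$, and that the local functional equation holds with meromorphic $\gamma_v$, all of which rest on a delicate analysis of archimedean Whittaker functions and their Fourier transforms.
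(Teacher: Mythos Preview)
The paper does not give its own proof of this theorem: it is stated with the citation \cite{JPSS} and used as a black box, so there is no in-paper argument to compare against. Your sketch is a faithful outline of the original Jacquet--Piatetski-Shapiro--Shalika method (local zeta integrals generating a fractional ideal, unfolding a global integral against an Eisenstein series in the equal-rank case, reading off the functional equation and pole structure), and is the standard route to this result.

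One small imprecision worth flagging: you describe the pole analysis uniformly through the Eisenstein series, but in the $n>n'$ case there is no Eisenstein series in the integral representation---the global integral is an absolutely convergent period of cusp forms and is entire outright, so $\delta_{\pi\times\pi'}=0$ there for trivial reasons. The Eisenstein-series pole argument you give applies only when $n=n'$. Also, the full statement as written (in particular that $\Lambda$ is entire of order $1$, not merely meromorphic) requires supplementing the JPSS integral representation with the boundedness-in-vertical-strips results of Moeglin--Waldspurger and the argument that the completed $L$-function, rather than just the zeta integral for a particular choice of test data, is itself entire; your Phragm\'en--Lindel\"of remark gestures at this but the passage from ``some zeta integral is entire'' to ``$\Lambda$ is entire'' needs the gcd interpretation at every place, including archimedean, which is exactly the obstacle you correctly identify at the end.
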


It follows from Theorem \ref{thm:JPSS} that
\begin{equation}
	\label{eqn:dual}
	L(s,\widetilde{\pi}\times\widetilde{\pi}')=\overline{L(\overline{s},\pi\times\pi)}.
	\end{equation}
We can explicitly determine the numbers $\mu_{j,j',\pi\times\pi'}(v)$ at all $v\mid\infty$ (resp. $\alpha_{j,j',\pi\times\pi'}(v)$ at all $v\in S_{\pi}\cup S_{\pi'}$) using the archimedean case of the local Langlands correspondence \cite[Section 3.1]{MullerSpeh} (resp. \cite[Appendix]{SoundararajanThorner}).  These descriptions and \eqref{eqn:Ramanujan1} yield the bounds
	\begin{equation}
		\label{eqn:Ramanujan2}
	|\alpha_{j,j',\pi\times\pi'}(v)|\leq  q_v^{\theta_n+\theta_{n'}},\qquad \mathrm{Re}(\mu_{j,j',\pi\times\pi'}(v))\geq -(\theta_n+\theta_{n'}).
	\end{equation}
	If $\ell\geq 1$ is an integer and $v\nmid\infty$, then we define
	\begin{equation}
	\label{eqn:a_def}
	a_{\pi\times\pi'}(v^{\ell})= \begin{cases}
 	a_{\pi}(v^{\ell})a_{\pi'}(v^{\ell})&\mbox{if $v\notin S_{\pi}\cup S_{\pi'}$,}\\
 	\sum_{j=1}^n \sum_{j'=1}^{n'}\alpha_{j,j',\pi\times\pi'}(v)^{\ell}&\mbox{if $v\in S_{\pi}\cup S_{\pi'}$,}
 \end{cases}\qquad a_{\widetilde{\pi}\times\widetilde{\pi}'}(v^{\ell})=\overline{a_{\pi\times\pi'}(v^{\ell})}.\hspace{-0.5mm}
	\end{equation}
	We have the Dirichlet series identity
	\begin{equation}
		\label{eqn:log_deriv}
	-\frac{L'}{L}(s,\pi\times\pi')=\sum_{v\nmid\infty}\sum_{\ell=1}^{\infty}\frac{a_{\pi\times\pi'}(v^{\ell})\log q_v}{q_v^{\ell s}},\qquad\mathrm{Re}(s)>1.
	\end{equation}
	The numbers $a_{\pi\times\widetilde{\pi}}(v^{\ell})$ and $a_{\pi'\times\widetilde{\pi}'}(v^{\ell})$ are non-negative (see Lemma \ref{lem:HR} below).  By \cite[Appendix]{SoundararajanThorner}, we have the bound
	\begin{equation}
	\label{eqn:Brumley}
	|a_{\pi\times\pi'}(v^{\ell})|^2 \leq a_{\pi\times\widetilde{\pi}}(v^{\ell})a_{\pi'\times\widetilde{\pi}'}(v^{\ell}).
	\end{equation}
	
	Let $\pi=\pi^*\otimes|\cdot|^{it_{\pi}}\in\mathfrak{F}_{n}$, $\pi'={\pi'}^*\otimes|\cdot|^{it_{\pi'}}\in\mathfrak{F}_{n'}$, and $\chi=\chi^*\otimes|\cdot|^{it_{\chi}}\in\mathfrak{F}_{1}$.  We define the analytic conductor
	\[
	C(\pi\times\pi')=D_F^{n'n}N_{\pi^*\times{\pi^*}'}\prod_{v\mid\infty}\prod_{j=1}^n \prod_{j=1}^{n'} (|\mu_{j,j',\pi^*\times{\pi^*}'}(v)+i(t_{\pi}+t_{\pi'})|+3)^{[F_v:\mathbb{Q}]}.
	\]
	Using the bound $N_{\pi^*\times{\pi'}^*}\mid N_{\pi^*}^{n'}N_{{\pi'}^*}^{n}$ \cite[Theorem 2]{BushnellHenniart} and the archimedean case of the local Langlands correspondence \cite[Section 3]{MullerSpeh} (see also \cite[Appendix]{Humphries}), we find that
	\begin{equation}
		\label{eqn:BH}
		\log C(\pi\times(\pi'\otimes\chi))\ll n'\log C(\pi)+n\log C(\pi')+n'n C(\chi).
	\end{equation}

\begin{lemma}
	\label{lem:GHL}
Let $J\geq 1$.  For $j\in\{1,\ldots,J\}$, let $(\pi_j,\pi_j',\chi_j)\in\mathfrak{F}_{n_j}\times\mathfrak{F}_{n_j'}\times\mathfrak{F}_{1}$.  Define
\[
\mathfrak{Q} = \prod_{j=1}^J C(\pi_j)^{n_j'}C(\pi_j')^{n_j}C(\chi)^{n_j'n_j},\quad \mathfrak{S}= \bigcup_{j=1}^J (S_{\pi_j}\cup S_{\pi_j'}\cup S_{\chi}),\quad D(s) =  \prod_{j=1}^J L(s,\pi_j\times(\pi'_j\otimes\chi_j)).
\]
Assume that $D(s)$ is holomorphic on $\mathbb{C}-\{1\}$ with a pole of order $r\geq 1$ at $s=1$.  Write
\begin{equation}
\label{eqn:aDdef}
a_D(v^{\ell})=\sum_{j=1}^J a_{\pi_j\times(\pi_j'\otimes\chi}(v^{\ell}),\qquad -\frac{D'}{D}(s) = \sum_{v}\sum_{\ell=1}^{\infty}\frac{a_D(v^{\ell})\log q_v}{q_v^{\ell s}}.
\end{equation}
Let $Q \geq \mathfrak{Q}$.  If $\mathrm{Re}(a_D(v^{\ell}))\geq 0$ for all $\ell\geq 1$ and $v\notin\mathfrak{S}$, then there exists an absolute and effectively computable constant $\Cl[abcon]{GHL}>0$ such that $D(\sigma)$ has at most $r$ zeros in the interval
\[
\Big[1-\frac{\Cr{GHL}}{r(\log Q+\sum_{v\in\mathfrak{S}}\frac{|a_D(v^{\ell})|\log q_v}{q_v})},1\Big)
\]
and no zeros in the interval $[1,\infty)$.
\end{lemma}
\begin{proof}
The proof is identical to \cite[Lemma 5.9]{IK} except that we do not trivially bound the contribution from the $a_D(v^{\ell})$ with $v\in \mathfrak{S}$.
\end{proof}

\subsection{Isobaric sums}
\label{subsec:isobaric}

Let $r\geq 1$ be an integer.  For $1\leq j\leq r$, let $\pi_{j}\in\mathfrak{F}_{d_j}$.  Langlands associated to $(\pi_1,\ldots,\pi_r)$ an automorphic representation of $\mathrm{GL}_{d_1+\cdots+d_r}(\mathbb{A}_F)$, the isobaric sum $\Pi=\pi_1\boxplus\cdots\boxplus\pi_r$.  Its $L$-function is $L(s,\Pi)=\prod_{j=1}^r L(s,\pi_j)$, and its contragredient is $\widetilde{\pi}_1\boxplus\cdots\boxplus\widetilde{\pi}_r$. Let $\mathfrak{A}_{n}$ be the set of isobaric automorphic representations of $\mathrm{GL}_n(\mathbb{A}_F)$.  If $\Pi=\pi_1\boxplus\cdots\boxplus\pi_r$ and $\Pi'=\pi_1'\boxplus\cdots\boxplus\pi_{r'}'$, then
\[
L(s,\Pi\times\Pi')=\prod_{j=1}^r \prod_{k=1}^{r'} L(s,\pi_j\times\pi_k').
\]
\begin{lemma}\cite{HoffsteinRamakrishnan}
\label{lem:HR}
If $\Pi\in\mathfrak{A}_{n}$, then $-\frac{L'}{L}(s,\Pi\times\widetilde{\Pi})$ has non-negative Dirichlet coefficients.
\end{lemma}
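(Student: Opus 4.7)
The plan is to factor $L(s,\Pi\times\widetilde{\Pi})$ via the isobaric decomposition of $\Pi$, reduce nonnegativity to a pointwise statement at each $v\nmid\infty$, and there recognize the local Dirichlet coefficient as a square of a modulus.

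Write $\Pi=\pi_1\boxplus\cdots\boxplus\pi_r$ with each $\pi_j\in\mathfrak{F}_{d_j}$ cuspidal. By the product formula of \cref{subsec:isobaric} and \eqref{eqn:log_deriv},
\[
a_{\Pi\times\widetilde{\Pi}}(v^{\ell}) \;=\; \sum_{j,k=1}^{r} a_{\pi_j\times\widetilde{\pi}_k}(v^{\ell}), \qquad v\nmid\infty,\;\ell\ge 1,
\]
so it suffices to show this double sum is nonnegative for every such $v$ and $\ell$.

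At any $v\notin\bigcup_{j} S_{\pi_j}$, the first clause of \eqref{eqn:a_def} together with $a_{\widetilde{\pi}_k}(v^{\ell})=\overline{a_{\pi_k}(v^{\ell})}$ (a consequence of $\{\alpha_{i,\widetilde{\pi}}(v)\}=\{\overline{\alpha_{i,\pi}(v)}\}$ from \cref{subsec:standard}) gives $a_{\pi_j\times\widetilde{\pi}_k}(v^{\ell})=a_{\pi_j}(v^{\ell})\overline{a_{\pi_k}(v^{\ell})}$, so the double sum telescopes to
\[
a_{\Pi\times\widetilde{\Pi}}(v^{\ell}) \;=\; \Bigl|\sum_{j=1}^{r} a_{\pi_j}(v^{\ell})\Bigr|^{2} \;=\; |a_{\Pi}(v^{\ell})|^{2} \;\geq\; 0.
\]

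At a ramified $v$ the naive ``Satake parameters multiply'' identity fails in general (for instance, already for Steinberg $\pi_v$ the Rankin--Selberg factor $L(s,\pi_v\times\widetilde{\pi}_v)$ carries a spurious $\zeta_v(s)$ beyond the square piece), and I would instead argue via local Langlands. Let $\phi_{j,v}$ denote the Frobenius-semisimplified Weil--Deligne parameter attached to $\pi_{j,v}$ and set $\Phi_v=\bigoplus_j \phi_{j,v}$, so that the summed local factor is $L(s,\Phi_v\otimes\overline{\Phi_v})$; its logarithmic-derivative coefficient at $v^{\ell}$ is the trace of $\mathrm{Frob}_v^{\ell}$ on the inertia-and-monodromy invariants of $V_{\Phi_v}\otimes\overline{V_{\Phi_v}}$. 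Since each $\pi_j$ is unitary, $I_v$ acts unitarily on $V_{\Phi_v}$; decomposing into $I_v$-isotypic components identifies the relevant invariant subspace with a direct sum of endomorphism algebras carrying the canonical positive-definite Hilbert--Schmidt pairing $\langle A,B\rangle=\mathrm{tr}(AB^{*})$, with respect to which the Frobenius action diagonalizes and its $\ell$th-power trace works out to $\sum_i |\mu_i|^{2\ell}$ for suitable Frobenius eigenvalues $\mu_i$, hence is nonnegative.

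The main obstacle is the ramified case: beyond the one-line identity at unramified $v$, one must transfer the problem to the Weil--Deligne side via the local Langlands identification $L(s,\pi_v\times\widetilde{\pi}_v)=L(s,\phi_v\otimes\overline{\phi_v})$ and then carefully control the interplay of Frobenius with the inertia and monodromy filtrations on $V\otimes\overline{V}$. Once this identification is in place, positivity is a structural consequence of the canonical sesquilinear pairing on $V\otimes\overline{V}$, and the proof reduces to the algebraic computation above.
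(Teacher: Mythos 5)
The paper quotes \cref{lem:HR} from \cite{HoffsteinRamakrishnan} without reproducing a proof, so the benchmark is the standard argument from that source. Your reduction to a local statement and your computation at unramified $v$ are exactly that argument and are complete: summing $a_{\pi_j\times\widetilde{\pi}_k}(v^{\ell})=a_{\pi_j}(v^{\ell})\overline{a_{\pi_k}(v^{\ell})}$ over $j,k$ gives $|a_{\Pi}(v^{\ell})|^2\geq 0$, using only facts recorded in \cref{subsec:standard,subsec:isobaric}.

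The ramified half, however, is not yet a proof, and it is precisely the content of the cited lemma. Two concrete problems. First, the assertion that the trace of $\mathrm{Frob}_v^{\ell}$ on the relevant invariant subspace of $V\otimes\overline{V}$ ``works out to $\sum_i|\mu_i|^{2\ell}$'' is false as stated: already for an unramified two-dimensional $V$ with Frobenius eigenvalues $\alpha,\beta$, the trace is $|\alpha^{\ell}+\beta^{\ell}|^{2}$, whose expansion contains the cross terms $\alpha^{\ell}\overline{\beta}^{\ell}$; the Frobenius eigenvalues on $V\otimes\overline{V}$ are the products $\mu_a\overline{\mu_b}$, and nonnegativity comes from a Hermitian-square (Gram) structure, not from each eigenvalue being a modulus squared. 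The correct form of your isotypic argument is that Frobenius permutes the inertia-isotypic blocks, only Frobenius-stable data contribute to the trace, and each contribution is of the shape $|\mathrm{tr}(A_\tau)|^{2}$ with $A_\tau$ well defined up to the Schur scalar (which cancels in $A_\tau\otimes\overline{A_\tau}$); this needs to be carried out, not asserted. Second, the monodromy operator is not handled: the local factor is computed on $(\ker N_W)^{I_v}$ with $N_W=N\otimes 1+1\otimes\overline{N}$, which is strictly larger than $(\ker N)^{I_v}\otimes\overline{(\ker N)^{I_v}}$. The standard remedy is to decompose the Frobenius-semisimplified parameter as $\bigoplus_i \rho_i\otimes\mathrm{Sp}(n_i)$ with $\rho_i$ irreducible Weil representations, use $\mathrm{Sp}(n_i)\otimes\mathrm{Sp}(n_j)\cong\bigoplus_{k}\mathrm{Sp}(n_i+n_j-1-2k)$ to write the $\ell$-th coefficient as $\sum_{i,j}g_{ij}(\ell)\,\mathrm{tr}(\mathrm{Frob}_v^{\ell}\mid(\rho_i\otimes\overline{\rho_j})^{I_v})$ with $g_{ij}(\ell)=\sum_{k=1}^{\min(n_i,n_j)}q_v^{\ell(k-(n_i+n_j)/2)}$, and observe that $(g_{ij}(\ell))$ is itself a Gram matrix of nonnegative vectors; combining the two Gram structures exhibits the coefficient as a sum of Hermitian squares. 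You flag this interplay as ``the main obstacle'' but leave it unresolved. Finally, your transfer to the Weil--Deligne side silently uses that the JPSS local factors at ramified $v$ agree with the factors of the tensor-product parameter; this identification is a genuine input (it is what \cite[Appendix]{SoundararajanThorner} supplies and what the paper invokes elsewhere) and should be cited rather than assumed.
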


\subsection{Symmetric power lifts from $\mathrm{GL}_2$}

Let $\pi\in\mathfrak{F}_{2}$.  Define $\mathrm{Sym}^0(\pi)=\mathbbm{1}$ and $\mathrm{Sym}^1(\pi)=\pi$.  If is conjectured that if $m\geq 2$, then $\mathrm{Sym}^{m}(\pi)\in\mathfrak{A}_{m+1}$ with contragredient $\mathrm{Sym}^{m}(\widetilde{\pi})=\mathrm{Sym}^{m}(\pi)\otimes\overline{\omega}_{\pi}^{m}$.  This is known for $m\leq 4$ \cite{GJ,KimShahidi,Kim}.  If $\mathrm{Sym}^{m}(\pi)\in\mathfrak{A}_{m+1}$, then for each $v\in S_{\pi}$, there exist complex numbers $(\alpha_{j,\mathrm{Sym}^{m}(\pi)}(v))_{j=0}^{m}$ such that
\begin{equation}
\label{eqn:symm_power_def}
L(s,\mathrm{Sym}^m(\pi_v))^{-1}=\begin{cases}
\prod_{j=0}^m (1-\alpha_{1,\pi}(v)^{j}\alpha_{2,\pi}(v)^{m-j}q_v^{-s})&\mbox{if $v\nmid\infty$ and $v\notin S_{\pi}$,}\\
\prod_{j=0}^m(1-\alpha_{j,\mathrm{Sym}^{m}(\pi)}(v)q_v^{-s})&\mbox{if $v\nmid\infty$ and $v\in S_{\pi}$.}
\end{cases}	
\end{equation}
\begin{theorem}[\cite{NewtonThorne3}]
\label{thm:NT}
If $F$ is totally real, $j\geq 0$, and $\pi\in\mathrm{RA}_{F}$, then $\mathrm{Sym}^{j}(\pi)\in\mathfrak{F}_{j+1}$.
\end{theorem}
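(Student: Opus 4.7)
The plan is to proceed by induction on $j$ using automorphy lifting theorems applied to the compatible system of Galois representations attached to $\pi$. Since $\pi\in\mathrm{RA}_F$ is regular algebraic and $F$ is totally real, classical work of Carayol, Taylor, Blasius--Rogawski, and Saito attaches a compatible system $\{\rho_{\pi,\ell}\}$ of two-dimensional geometric $\ell$-adic Galois representations $\rho_{\pi,\ell}\colon G_F\to\mathrm{GL}_2(\overline{\mathbb{Q}}_\ell)$; each $\rho_{\pi,\ell}$ is irreducible because $\pi$ is non-dihedral. I would consider the symmetric power $\mathrm{Sym}^j\rho_{\pi,\ell}\colon G_F\to\mathrm{GL}_{j+1}(\overline{\mathbb{Q}}_\ell)$, which is regular (its Hodge--Tate weights are distinct) and essentially conjugate self-dual up to a twist by a power of the central character. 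My goal is to show $\mathrm{Sym}^j\rho_{\pi,\ell}$ is automorphic over $F$, which via strong multiplicity one recovers $\mathrm{Sym}^j\pi\in\mathfrak{F}_{j+1}$.

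The base cases $j\le 4$ are known by \cite{GJ,KimShahidi,Kim}, so the inductive step is needed for $j\ge 5$. The idea is to produce an automorphy lifting input for $\mathrm{Sym}^j\rho_{\pi,\ell}$ by combining the induction hypothesis with a tensor-product/doubling trick: the Clebsch--Gordan decomposition
\[
\mathrm{Sym}^a\rho_{\pi,\ell}\otimes\mathrm{Sym}^b\rho_{\pi,\ell}\cong\bigoplus_{c=0}^{\min\{a,b\}}\mathrm{Sym}^{a+b-2c}\rho_{\pi,\ell}\otimes(\det\rho_{\pi,\ell})^c
\]
feeds a known automorphic representation (the left-hand side being automorphic once $\mathrm{Sym}^a\pi$ and $\mathrm{Sym}^b\pi$ are) into the automorphy of $\mathrm{Sym}^{a+b}\pi$ on the right. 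Combining this with a Taylor--Wiles--Kisin--Thorne automorphy lifting theorem propagates automorphy from $\mathrm{Sym}^{<j}\pi$ to $\mathrm{Sym}^j\pi$. When the residual image is too small or the adequacy hypothesis fails at the initially chosen $\ell$, I would first prove automorphy of $\mathrm{Sym}^j\rho_{\pi,\ell}|_{G_{F'}}$ over a suitable solvable totally real extension $F'/F$ using the potential automorphy theorems of \cite{BLGHT} and their refinements, and then descend to $F$ via Arthur--Clozel cyclic descent.

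The main obstacle is arranging, at each step of the induction, a prime $\ell$ and a residually automorphic lift so that all the hypotheses of the lifting theorem hold simultaneously: adequacy of the residual image of $\overline{\mathrm{Sym}^j\rho_{\pi,\ell}}$, local-global compatibility at places above $\ell$, and the appropriate ordinary or Fontaine--Laffaille condition. This is precisely the technical heart of the work of Newton and Thorne \cite{NewtonThorne,NewtonThorne2,NewtonThorne3}, where a sophisticated inductive choice of auxiliary primes and deformation problems is implemented to ensure these conditions can be met uniformly in $j$; cuspidality of the resulting $\mathrm{Sym}^j\pi$ then follows from the irreducibility of $\mathrm{Sym}^j\rho_{\pi,\ell}$ (using that $\pi$ is non-dihedral) together with the classification of the image of $\rho_{\pi,\ell}$ and standard Galois-theoretic arguments.
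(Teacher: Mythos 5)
The paper does not actually prove this statement: it is imported wholesale from Newton--Thorne \cite{NewtonThorne3}, and your write-up in the end does the same thing, since the paragraph conceding that arranging adequacy, local--global compatibility and the ordinary/Fontaine--Laffaille conditions ``is precisely the technical heart of the work of Newton and Thorne'' defers the entire content of the theorem to the result being cited. A proof proposal that reduces to ``this is what \cite{NewtonThorne,NewtonThorne2,NewtonThorne3} do'' is a citation, not an argument.

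More substantively, the inductive mechanism you sketch has a concrete gap. In the Clebsch--Gordan step you assert that the left-hand side $\mathrm{Sym}^a(\pi)\boxtimes\mathrm{Sym}^b(\pi)$ is automorphic ``once $\mathrm{Sym}^a\pi$ and $\mathrm{Sym}^b\pi$ are.'' That is exactly the unproven tensor-product (Rankin--Selberg) functoriality $\mathrm{GL}_{a+1}\times\mathrm{GL}_{b+1}\to\mathrm{GL}_{(a+1)(b+1)}$, known only for $\mathrm{GL}_2\times\mathrm{GL}_2$ and $\mathrm{GL}_2\times\mathrm{GL}_3$ \cite{KimShahidi,RamakrishnanWang}; indeed the surrounding paper repeatedly emphasizes that precisely this lack of modularity is what blocks the classical Hoffstein--Ramakrishnan strategy and forces its new construction \eqref{eqn:D_def}. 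So for $j\geq 5$ your doubling trick cannot supply the automorphic input to the lifting theorem, and the induction does not close. Newton and Thorne's actual proof is of a different shape: it proceeds by analytic continuation of symmetric power functoriality in $p$-adic families on eigenvarieties, level-raising/lowering congruences and a ``ping-pong'' argument between carefully chosen classical points, with additional work in \cite{NewtonThorne3} to treat Hilbert modular forms over totally real fields; it is not a straightforward Taylor--Wiles induction seeded by Clebsch--Gordan. Your cuspidality endgame (irreducibility of $\mathrm{Sym}^j\rho_{\pi,\ell}$ for non-dihedral $\pi$) is the standard and correct way to conclude once automorphy is known, but the automorphy step itself is not established by your outline.
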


\begin{lemma}
\label{lem:CG}
Let $j,k,u\geq 0$ be integers, $\pi\in\mathfrak{F}_{2}$, and $\psi_1,\psi_2,\chi\in\mathfrak{F}_{1}$.  If $\mathrm{Sym}^u(\pi)\in\mathfrak{F}_{u+1}$ for $u\in\{j,k\}\cup\{j+k-2r\colon 0\leq r\leq \min\{j,k\}\}$ and
\[
\mathrm{Sym}^{j}(\pi\otimes\psi_1)\boxtimes\mathrm{Sym}^k(\pi\otimes\psi_2)\otimes\chi=\mathop{\boxplus}_{r=0}^{\min\{j,k\}}\mathrm{Sym}^{j+k-2r}(\pi)\otimes\chi\psi_1^j \psi_2^k\omega_{\pi}^{r}\in\mathfrak{A}_{(j+1)(k+1)},
\]
then
\[
L(s,\mathrm{Sym}^{j}(\pi\otimes\psi_1)\boxtimes(\mathrm{Sym}^k(\pi\otimes\psi_2)\otimes\chi))=L(s,\mathrm{Sym}^{j}(\pi\otimes\psi_2)\times(\mathrm{Sym}^k(\pi\otimes\psi_2)\otimes\chi)).
\]
In particular, if $v\nmid\infty$, then the following identities hold:
\begin{align}
a_{\mathrm{Sym}^{j}(\pi)\times(\mathrm{Sym}^k(\pi)\otimes\chi)}(v^{\ell}) &= \sum_{r=0}^{\min\{j,k\}}a_{\mathrm{Sym}^{j+k-2r}(\pi)\otimes\chi \omega_{\pi}^{r}}(v^{\ell}),\label{eqn:CG1}\\
a_{\mathrm{Sym}^{j}(\pi)\times\mathrm{Sym}^j(\widetilde{\pi})}(v^{\ell})&=1+\sum_{r=1}^j a_{\mathrm{Sym}^{2r}(\pi)\otimes\omega_{\pi}^{-r}}(v^{\ell}).\label{eqn:CG2}
\end{align}
\end{lemma}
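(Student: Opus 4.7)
My plan is to deduce both the $L$-function identity and the Dirichlet coefficient identities \eqref{eqn:CG1}, \eqref{eqn:CG2} formally from the hypothesized isobaric equality
\[
\mathrm{Sym}^j(\pi \otimes \psi_1) \boxtimes \mathrm{Sym}^k(\pi \otimes \psi_2) \otimes \chi = \mathop{\boxplus}_{r=0}^{\min\{j,k\}} \mathrm{Sym}^{j+k-2r}(\pi) \otimes \chi \psi_1^j \psi_2^k \omega_\pi^r,
\]
whose representation-theoretic content is the classical Clebsch--Gordan decomposition of $\mathrm{Sym}^j \otimes \mathrm{Sym}^k$ as a $\mathrm{GL}_2(\mathbb{C})$-representation (twisted through by $\psi_1^j\psi_2^k\chi$ via the identity $\mathrm{Sym}^j(\pi\otimes\psi) = \mathrm{Sym}^j(\pi)\otimes\psi^j$ on local Langlands parameters). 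The first step will be to pass from this equality of isobaric representations to an equality of $L$-functions. By the definition of the $L$-function of an isobaric sum (\cref{subsec:isobaric}), the $L$-function of the right-hand side is the product $\prod_{r=0}^{\min\{j,k\}} L(s, \mathrm{Sym}^{j+k-2r}(\pi) \otimes \chi \psi_1^j \psi_2^k \omega_\pi^r)$. On the left-hand side, the Langlands tensor product $\boxtimes$ is defined place-by-place on local Langlands parameters, and its $L$-factor at each $v$ coincides with the Rankin--Selberg local $L$-factor $L(s, \mathrm{Sym}^j(\pi\otimes\psi_1)_v \times (\mathrm{Sym}^k(\pi\otimes\psi_2) \otimes \chi)_v)$ by compatibility of local Langlands with Rankin--Selberg (cf.\ \cref{thm:JPSS}). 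Taking Euler products yields the claimed $L$-function identity.

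The coefficient identities then follow by specializing and differentiating. Setting $\psi_1 = \psi_2 = 1$ gives
\[
L(s, \mathrm{Sym}^j(\pi) \times (\mathrm{Sym}^k(\pi) \otimes \chi)) = \prod_{r=0}^{\min\{j,k\}} L(s, \mathrm{Sym}^{j+k-2r}(\pi) \otimes \chi \omega_\pi^r).
\]
Taking the negative logarithmic derivative of both sides and comparing Dirichlet coefficients of $q_v^{-\ell s}\log q_v$ via \eqref{eqn:log_deriv} for each $v\nmid\infty$ and $\ell\geq 1$ will give \eqref{eqn:CG1}. To obtain \eqref{eqn:CG2}, I will apply \eqref{eqn:CG1} with $k=j$ and $\chi=\overline{\omega}_\pi^j$, using $\mathrm{Sym}^j(\widetilde{\pi}) = \mathrm{Sym}^j(\pi)\otimes\overline{\omega}_\pi^j$. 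After reindexing $r\mapsto s=j-r$, the $s=0$ contribution is $a_{\mathbbm{1}}(v^\ell)=1$ (since $-\zeta_F'/\zeta_F(s)$ has Dirichlet coefficient $1$ at every prime power), and the remaining $s\geq 1$ terms match the stated sum.

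There is no serious obstacle here: the entire content has been packaged into the hypothesized isobaric equality, and the remaining work is Euler-product bookkeeping together with the index change for \eqref{eqn:CG2}. The cuspidal automorphy hypothesis $\mathrm{Sym}^u(\pi)\in\mathfrak{F}_{u+1}$ for $u\in\{j,k\}\cup\{j+k-2r\colon 0\leq r\leq\min\{j,k\}\}$ is exactly what is needed to interpret both sides of the isobaric equality as honest automorphic representations, so that each $L$-factor appearing in the argument is a bona fide automorphic $L$-function to which \cref{thm:JPSS} and the material of \cref{subsec:standard,subsec:RS} apply without qualification.
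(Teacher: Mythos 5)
Your proposal is correct and follows essentially the same route as the paper, whose proof simply cites the Clebsch--Gordan identities: you unpack the hypothesized isobaric (Clebsch--Gordan) equality into an $L$-function identity via the local Langlands/Rankin--Selberg compatibility, then obtain \eqref{eqn:CG1} by comparing logarithmic derivatives and \eqref{eqn:CG2} by the specialization $k=j$, $\chi=\overline{\omega}_{\pi}^{j}$ (using $\mathrm{Sym}^j(\widetilde{\pi})=\mathrm{Sym}^j(\pi)\otimes\overline{\omega}_{\pi}^{j}$) together with the reindexing $r\mapsto j-r$. Nothing further is needed.
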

\begin{proof}
	These follow from the Clebsch--Gordan identities.
\end{proof}

\begin{lemma}
	\label{lem:AC}
Let $F$ be totally real, let $\pi\in\mathrm{RA}_{F}$, and let $y_{\pi}$ be as in \eqref{eqn:y_def}. If $m\geq 1$, then $S_{\mathrm{Sym}^m(\pi)}\subseteq S_{\pi}$ and $\log C(\mathrm{Sym}^m(\pi))\ll m\log(m C(\pi)) + m^{1+y_{\pi}}\log N_{\pi}$.
\end{lemma}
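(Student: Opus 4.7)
The inclusion $S_{\mathrm{Sym}^m(\pi)}\subseteq S_\pi$ is immediate from the local Langlands correspondence underlying the definition of $\mathrm{Sym}^m(\pi_v)$: if $v\notin S_\pi$, then the Weil--Deligne parameter $\varphi_v$ of $\pi_v$ has trivial inertial restriction and vanishing monodromy, hence so does $\mathrm{Sym}^m\circ\varphi_v$, forcing $\mathrm{Sym}^m(\pi_v)$ to be unramified. For the conductor bound I split $\log C(\mathrm{Sym}^m(\pi))$ into the three pieces of \eqref{eqn:AC_def}: the discriminant $(m+1)\log D_F$, the archimedean sum $\sum_{v\mid\infty}\sum_{j=0}^m[F_v:\mathbb{R}]\log(|\mu_{j,\mathrm{Sym}^m(\pi)}(v)|+3)$, and $\log N_{\mathrm{Sym}^m(\pi)}=\sum_{v\in S_\pi}a(\mathrm{Sym}^m(\pi_v))\log q_v$, and bound each in turn.

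The first two pieces are elementary. Since $D_F^2\leq C(\pi)$, one has $(m+1)\log D_F\ll m\log C(\pi)$. At each $v\mid\infty$, local Langlands identifies the parameters of $\mathrm{Sym}^m(\pi_v)$ as $\{j\mu_{1,\pi}(v)+(m-j)\mu_{2,\pi}(v):0\leq j\leq m\}$, and each satisfies
\[
\log(|j\mu_{1,\pi}(v)+(m-j)\mu_{2,\pi}(v)|+3)\ll\log m+\log(|\mu_{1,\pi}(v)|+3)+\log(|\mu_{2,\pi}(v)|+3).
\]
Summing over $j$ and $v\mid\infty$ yields an archimedean contribution of $O(m\log(m C(\pi)))$, so together with the discriminant piece these account for the first term of the stated bound.

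The main work, and the principal obstacle, is bounding $\log N_{\mathrm{Sym}^m(\pi)}$, a purely local question about the Artin conductor of $\mathrm{Sym}^m\circ\varphi_v$ at each $v\in S_\pi$. When $y_\pi=0$ because $\pi$ has squarefree conductor, every $v\in S_\pi$ has $a(\pi_v)=1$, so $\pi_v$ is either an unramified twist of Steinberg or a tamely ramified principal series $\chi_1\boxplus\chi_2$ with exactly one tame character. A direct analysis of the associated Weil--Deligne representation shows $a(\mathrm{Sym}^m(\pi_v))\leq m$ in both cases: Steinberg yields a single block of conductor $m$ (since its $(m{+}1)$-dimensional induced monodromy has one-dimensional kernel), while the principal-series case yields $\mathop{\boxplus}_{j=0}^m\chi_1^j\chi_2^{m-j}$ whose summands each have conductor at most $1$. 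Summing and using $\log N_\pi=\sum_{v\in S_\pi}\log q_v$ for squarefree $N_\pi$ gives $\log N_{\mathrm{Sym}^m(\pi)}\leq m\log N_\pi$. The non-CM elliptic curve case is handled identically via the Kodaira--Tate classification of reduction types and the explicit conductor formulas for symmetric powers of an elliptic curve at bad primes (Rohrlich; Rouse--Thorne).

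For the remaining case $y_\pi=2$, I would invoke the worst-case Weil--Deligne estimate $a(\mathrm{Sym}^m(\pi_v))\ll m^{3}\,a(\pi_v)$: the tame part contributes at most $\dim\mathrm{Sym}^m V=m+1$, while a break-decomposition argument produces a polynomial-in-$m$ bound on $\mathrm{sw}(\mathrm{Sym}^m V)$ in terms of $\mathrm{sw}(V)$ for the underlying $2$-dimensional Weil--Deligne representation $V$. Multiplying by $\log q_v$ and summing over $v\in S_\pi$ gives $\log N_{\mathrm{Sym}^m(\pi)}\ll m^{3}\log N_\pi=m^{1+y_\pi}\log N_\pi$, and combining the three contributions finishes the proof. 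The principal technical hurdle is this last Swan conductor estimate in the wildly ramified case; by contrast, the $y_\pi=0$ cases reduce to a transparent local decomposition requiring no serious input beyond local Langlands and Clebsch--Gordan.
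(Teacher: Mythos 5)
Your argument is correct in substance, but in the one genuinely hard case ($y_{\pi}=2$) it takes a different route from the paper. You bound $\log N_{\mathrm{Sym}^m(\pi)}$ locally: writing the conductor at $v\in S_{\pi}$ as tame part plus Swan part, using that every break of $\mathrm{Sym}^m V\subseteq V^{\otimes m}$ is at most the largest break of the two-dimensional Weil--Deligne representation $V$, so that $\mathrm{sw}(\mathrm{Sym}^m V)\leq (m+1)\,\mathrm{sw}(V)$ and hence $a(\mathrm{Sym}^m(\pi_v))\leq (m+1)(1+a(\pi_v))$ --- note this standard break-decomposition fact actually gives a \emph{linear} bound in $m$, so your stated $m^{3}$ is generous and the exponent $1+y_{\pi}$ is more than recovered; the only inputs you need beyond this are that the local component of the Newton--Thorne lift at $v$ is $\mathrm{Sym}^m(\pi_v)$ defined via the local Langlands correspondence (true by construction, and it also gives $S_{\mathrm{Sym}^m(\pi)}\subseteq S_{\pi}$ exactly as you say). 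The paper instead avoids all local ramification theory: it uses the global factorization $L(s,\mathrm{Sym}^{m+1}(\pi)\times\widetilde{\pi})=L(s,\mathrm{Sym}^{m+2}(\pi)\otimes\overline{\omega}_{\pi})L(s,\mathrm{Sym}^{m}(\pi))$ from \cref{lem:CG}, the Bushnell--Henniart divisibility $N_{\pi\times\pi'}\mid N_{\pi}^{n'}N_{\pi'}^{n}$, and $N_{\omega_{\pi}}\mid N_{\pi}$, to get the recursion $\mathrm{ord}_p(N_{\mathrm{Sym}^{m+2}(\pi)})\leq 2\,\mathrm{ord}_p(N_{\mathrm{Sym}^{m+1}(\pi)})-\mathrm{ord}_p(N_{\mathrm{Sym}^{m}(\pi)})+(2m+5)\,\mathrm{ord}_p(N_{\pi})$, and then induction yields $\mathrm{ord}_p(N_{\mathrm{Sym}^m(\pi)})\leq m^{3}\,\mathrm{ord}_p(N_{\pi})$. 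So the paper's proof trades sharpness (cubic rather than linear growth in $m$) for using only black-box global conductor inequalities, whereas yours is sharper but rests on the Swan/break estimate, which you should state and justify explicitly (it is the crux of your wild case and currently only gestured at). Your treatment of the squarefree and elliptic-curve cases and of the archimedean factor matches what the paper does by citing the local computations of Cogdell--Michel, the appendix reference, and Moreno--Shahidi.
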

\begin{proof}
Write the analytic conductor $C(\mathrm{Sym}^m(\pi))$ as $D_F^{m+1}N_{\mathrm{Sym}^m(\pi)}K_{\mathrm{Sym}^m(\pi)}$.  The bound $\log K_{\mathrm{Sym}^m(\pi)}\ll m\log(m K_{\pi})$ follows from the explicit description of $\pi_{v}$ at $v\mid\infty$ given by Moreno and Shahidi \cite{MorenoShahidi}.  If $y_{\pi}=0$, then the bound $\log N_{\mathrm{Sym}^m(\pi)}\ll m\log N_{\pi}$ follows from the local computations in \cite[Section 3]{CogdellMichel} and \cite[Appendix]{David}.  Otherwise, for $p$ a rational prime, it suffices to bound $\mathrm{ord}_p(N_{\mathrm{Sym}^m(\pi)})$.  By Lemma \ref{lem:CG} and the identity $\widetilde{\pi}=\pi\otimes\overline{\omega}_{\pi}$, the relation $L(s,\mathrm{Sym}^{m+1}(\pi)\times\widetilde{\pi})=L(s,\mathrm{Sym}^{m+2}(\pi)\otimes\overline{\omega}_{\pi})L(s,\mathrm{Sym}^m(\pi))$ holds.  Also, we have that $\mathrm{Sym}^{m+2}(\pi)=(\mathrm{Sym}^{m+2}(\pi)\otimes\overline{\omega}_{\pi})\otimes \omega_{\pi}$.  Bushnell and Henniart \cite{BushnellHenniart} proved that if $\pi\in\mathfrak{F}_{n}$ and $\pi'\in\mathfrak{F}_{n'}$, then $N_{\pi\times\pi'}\mid N_{\pi}^{n'}N_{\pi'}^n$.  
It follows that
\[
N_{\mathrm{Sym}^{m+2}(\pi)}\mid N_{\omega_{\pi}}^{m+3} N_{\mathrm{Sym}^{m+2}(\pi)\otimes\overline{\omega}_{\pi}}\qquad\textup{and}\qquad N_{\mathrm{Sym}^{m+2}(\pi)\otimes\overline{\omega}_{\pi}}N_{\mathrm{Sym}^m(\pi)}\mid N_{\mathrm{Sym}^{m+1}(\pi)}^2 N_{\pi}^{m+2}.
\]
Since $N_{\omega_{\pi}}\mid N_{\pi}$ \cite[Theorem A]{RamakrishnanYang}, the bound
\[
\mathrm{ord}_p(N_{\mathrm{Sym}^{m+2}(\pi)})\leq 2\mathrm{ord}_p(N_{\mathrm{Sym}^{m+1}(\pi)})-\mathrm{ord}_{p}(N_{\mathrm{Sym}^{m}(\pi)})+(2m+5)\mathrm{ord}_p(N_{\pi})
\]
holds.  By induction on $m$, we conclude that $\mathrm{ord}_p(N_{\mathrm{Sym}^m(\pi)})\leq m^3\mathrm{ord}_p(N_{\pi})$, as desired.
\end{proof}

\begin{lemma}
\label{lem:GRC}
Let $F$ be totally real, and let $\ell\geq 1$ and $m,n\geq 0$ be integers.  Let $\pi,\pi'\in\mathrm{RA}_F$.  If $v\nmid\infty$, then $|a_{\mathrm{Sym}^m(\pi)\times(\mathrm{Sym}^n(\pi')\otimes\chi)}(v^{\ell})|\leq (m+1)(n+1)$.
\end{lemma}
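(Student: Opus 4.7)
The plan is to invoke the Ramanujan conjecture for $\pi,\pi'\in\mathrm{RA}_F$ at every non-archimedean place and apply the triangle inequality to the definition \eqref{eqn:a_def} of the Rankin--Selberg coefficient. Since $\pi,\pi'$ correspond to primitive non-CM holomorphic Hilbert cusp forms of weight $\geq 2$, the theorem of Blasius (generalizing Deligne for $F=\mathbb{Q}$) gives that the local Langlands parameter $\varphi_{\pi,v}$ is tempered at every $v\nmid\infty$: the Frobenius eigenvalues on its Weil--Deligne module all have absolute value $1$. The same holds for $\pi'_v$, and $\varphi_{\chi,v}$ is tempered automatically because $\chi$ is unitary.

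Next I would verify that the symmetric-power functor on Weil--Deligne representations preserves temperedness, and likewise for tensoring with the one-dimensional tempered parameter $\varphi_{\chi,v}$. Via local Langlands, $\mathrm{Sym}^m(\pi_v)$ and $\mathrm{Sym}^n(\pi'_v)\otimes\chi_v$ are therefore tempered at every $v\nmid\infty$, and the tensor-product Weil--Deligne representation defining the local factor $L(s,\mathrm{Sym}^m(\pi_v)\times(\mathrm{Sym}^n(\pi'_v)\otimes\chi_v))$ is tempered as well. The Satake parameters $\alpha_{j,j',\mathrm{Sym}^m(\pi)\times(\mathrm{Sym}^n(\pi')\otimes\chi)}(v)$ appearing in \eqref{eqn:a_def}---obtained at unramified $v$ from the explicit product formula \eqref{eqn:symm_power_def}, and at ramified $v$ as the Frobenius eigenvalues on the inertia-fixed subspace---thus all satisfy $|\alpha_{j,j'}(v)|\leq 1$, with some possibly vanishing at ramified $v$. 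Since there are at most $(m+1)(n+1)$ of them, the triangle inequality gives
\[
|a_{\mathrm{Sym}^m(\pi)\times(\mathrm{Sym}^n(\pi')\otimes\chi)}(v^{\ell})|=\Bigl|\sum_{j,j'}\alpha_{j,j'}(v)^{\ell}\Bigr|\leq (m+1)(n+1).
\]

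The main subtlety is temperedness at the ramified finite places. At unramified $v$ the bound is transparent: the $(m+1)(n+1)$ Satake parameters are products of the form $\alpha_{1,\pi}(v)^{i}\alpha_{2,\pi}(v)^{m-i}\alpha_{1,\pi'}(v)^{j}\alpha_{2,\pi'}(v)^{n-j}\alpha_{\chi}(v)$, and the classical Ramanujan--Petersson bound forces each to have modulus exactly $1$. At ramified places one needs the full local Ramanujan statement for $\pi_v$ and $\pi'_v$---that Frobenius acts with eigenvalues of absolute value $1$ on the entire Weil--Deligne module, not merely on the inertia-fixed part---which for regular algebraic non-CM Hilbert modular forms is known via local-global compatibility of the associated Galois representations.
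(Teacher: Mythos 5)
Your argument is correct in substance, but it takes a genuinely different route from the paper. The paper never touches the ramified Rankin--Selberg parameters directly: it first records the pointwise bounds $|a_{\mathrm{Sym}^j(\pi)}(v^{\ell})|\leq j+1$ and $|a_{\mathrm{Sym}^k(\pi')}(v^{\ell})|\leq k+1$ at every finite place (via \cref{thm:NT} and \cite[Corollary 7.1.15]{potential}), then applies the Cauchy--Schwarz bound \eqref{eqn:Brumley} to reduce the lemma to bounding $a_{\mathrm{Sym}^m(\pi)\times\mathrm{Sym}^m(\widetilde{\pi})}(v^{\ell})$ and $a_{\mathrm{Sym}^n(\pi')\times\mathrm{Sym}^n(\widetilde{\pi}')}(v^{\ell})$, which are non-negative by \cref{lem:HR} and which the Clebsch--Gordan identity \eqref{eqn:CG2} expands into sums of standard-$L$ coefficients, giving $(m+1)^2$ and $(n+1)^2$. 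You instead work place by place with the tensor product of local Langlands parameters: temperedness of $\pi_v,\pi_v'$ at every finite $v$ (Blasius plus local-global compatibility at the ramified places), preservation of temperedness under $\mathrm{Sym}^m$, unitary twist, and tensor product, and the identification of the JPSS local factor with the Artin factor of the tensor-product Weil--Deligne representation. Both routes ultimately rest on the same deep inputs (temperedness at all finite places and the LLC description of ramified local factors, as in the appendix of Soundararajan--Thorner cited before \eqref{eqn:Ramanujan2}); yours is more direct, while the paper's reduction via \eqref{eqn:Brumley} and \cref{lem:CG} lets it quote a single coefficient bound for standard $L$-functions and avoid any discussion of monodromy at ramified places. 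One correction to your phrasing: for a tempered local component the Frobenius eigenvalues on the \emph{full} Weil--Deligne module need not have absolute value $1$ (for a Steinberg-type component they have absolute values $q_v^{\pm 1/2}$); temperedness means the Weil-group part of the parameter is bounded, and the inverse roots entering the local $L$-factor are the Frobenius eigenvalues on the inertia invariants of the kernel of the monodromy operator, which therefore have modulus at most $1$ (this is also why your later reference to the ``inertia-fixed subspace'' should really be to the monodromy-kernel invariants). With that adjustment, your triangle-inequality step $\bigl|\sum_{j,j'}\alpha_{j,j'}(v)^{\ell}\bigr|\leq (m+1)(n+1)$ is exactly right.
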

\begin{proof}
Using Theorem \ref{thm:NT}, we proceed as in \cite[Corollary 7.1.15]{potential} and conclude that at every place $v\nmid\infty$ (not just the unramified places) and every $\ell\geq 1$, we have that if $j,k\geq 0$, then
\begin{equation}
\label{eqn:GRC1}
|a_{\mathrm{Sym}^j(\pi)}(v^{\ell})|\leq j+1,\qquad |a_{\mathrm{Sym}^k(\pi')}(v^{\ell})|\leq k+1.
\end{equation}
Note that $a_{\mathrm{Sym}^m(\pi)\times\mathrm{Sym}^m(\widetilde{\pi})}(v^{\ell})\geq 0$ and $a_{\mathrm{Sym}^n(\pi')\times\mathrm{Sym}^n(\widetilde{\pi}')}(v^{\ell})\geq 0$ by Lemma \ref{lem:HR}, and
\[
|a_{\mathrm{Sym}^m(\pi)\times(\mathrm{Sym}^n(\pi)\otimes\chi)}(v^{\ell})|^2\leq a_{\mathrm{Sym}^m(\pi)\times\mathrm{Sym}^m(\widetilde{\pi})}(v^{\ell})a_{\mathrm{Sym}^n(\pi')\times\mathrm{Sym}^n(\widetilde{\pi}')}(v^{\ell})
\]
by \eqref{eqn:Brumley}.  Combining Lemma \ref{lem:CG} and \eqref{eqn:GRC1}, we find that
\[
a_{\mathrm{Sym}^m(\pi)\times\mathrm{Sym}^m(\widetilde{\pi})}(v^{\ell})\leq (m+1)^2,\qquad a_{\mathrm{Sym}^n(\pi')\times\mathrm{Sym}^n(\widetilde{\pi}')}(v^{\ell})\leq (n+1)^2.\qedhere
\]
\end{proof}

%
%

\section{Preliminaries for the proof of Theorem \ref{thm:main}}
\label{sec:strategy}

Let $(\pi_1,\pi_2)\in\mathfrak{F}_{n_1}\times\mathfrak{F}_{n_2}$.  Assume that $L(s,\pi_1\times\pi_2)$ is an irreducible $L$-function that is not the $L$-function of a real-valued ray class character.  The only known strategy to prove that $L(\sigma,\pi_1\times\pi_2)$ has no exceptional zero is to construct a Dirichlet series $D(s)$, integers $\ell_1,\ell_2\geq 0$ and $k\geq 1$ satisfying $\ell_1+\ell_2>k$, and a fixed $t\in(0,1)$ such that
\begin{enumerate}
	\item at each unramified $v\nmid\infty$ and $\ell\geq 1$, we have $\mathrm{Re}(a_D(v^{\ell}))\geq 0$ (see \eqref{eqn:aDdef}),
	\item $D(s)$ is holomorphic everywhere except for a pole of order $k$ at $s=1$, and
	\item $D(\sigma)(\sigma-1)^{k}L(\sigma,\pi_1\times\pi_2)^{-\ell_1}L(\sigma,\widetilde{\pi}_1\times\widetilde{\pi}_2)^{-\ell_2}$ is holomorphic at each $t<\sigma\leq 1$.
\end{enumerate}
A real zero of $L(s,\pi_1\times\pi_2)$ is a zero of $D(s)$ with multiplicity at least $\ell_1+\ell_2>k$, so the existence of a exceptional zero of $L(s,\pi_1\times\pi_2)$ contradicts Lemma \ref{lem:GHL} applied to $D(s)$.

\subsection{Earlier work}
\label{subsec:earlier_work}

First, let $\pi_1\in\cup_{n=2}^{\infty}\mathfrak{F}_{n}$ and $\pi_2=\mathbbm{1}$.  Hoffstein and Ramakrishnan \cite[Proof of Theorem B]{HoffsteinRamakrishnan} showed that sufficient progress towards the modularity of Rankin--Selberg $L$-functions suffices to prove that $L(s,\pi_1)$ has no exceptional zero.  Assume that $L(s,\pi_1\times\widetilde{\pi}_1)$ is modular so that there exists an isobaric automorphic representation $\pi_1\boxtimes\widetilde{\pi}_1$ such that $L(s,\pi_1\times\widetilde{\pi}_1)=L(s,\pi_1\boxtimes\widetilde{\pi}_1)$.  By \cite[Lemma 4.4]{HoffsteinRamakrishnan}, there exists a cuspidal constituent $\tau\notin\{\mathbbm{1},\pi_1,\widetilde{\pi}_1\}$ of $\pi_1\boxtimes\widetilde{\pi}_1$.  If $L(s,\pi_1\times\tau)$ is modular, then by \cite[Proofs of Lemma 4.4 and Claim 4.5]{HoffsteinLockhart}, $L(s,\pi_1\times\tau)/L(s,\pi_1)$ is entire.  Therefore, subject to the modularity of $L(s,\pi_1\times\widetilde{\pi}_1)$ and $L(s,\pi_1\times\tau)$, Hoffstein and Ramakrishnan prove that if $\Pi=\mathbbm{1}\boxplus\widetilde{\tau}\boxplus\pi_1$, then $D(s)=L(s,\Pi\times\widetilde{\Pi})$ satisfies the above criteria (1)--(3).  The results in \cite{Banks,HoffsteinLockhart,HoffsteinRamakrishnan,Luo,RamakrishnanWang} follow by proving the existence of $\pi_1\boxtimes\widetilde{\pi}_1$ and the existence of a cuspidal constituent $\tau$ of $\pi_1\boxtimes\widetilde{\pi}_1$ such that there exists an effectively computable constant $\Cl[abcon]{HR}=\Cr{HR}(n)\in(0,1)$ such that $L(\sigma,\pi_1\times\tau)/L(\sigma,\pi_1)$ is holomorphic at each $\sigma\in(\Cr{HR},1)$.

Let $F$ be totally real, $\pi,\pi'\in\mathrm{RA}_F$, $\chi\in\mathfrak{F}_{1}$, and $\pi\not\sim\pi'$.  Let $\pi_1=\mathrm{Sym}^m(\pi)$ and $\pi_2=\mathrm{Sym}^n(\pi')\otimes\chi$.  By Theorem \ref{thm:NT} and Lemma \ref{lem:CG}, $L(s,\pi_1\times\widetilde{\pi}_1)$ and $L(s,\pi_2\times\widetilde{\pi}_2)$ are modular.  By Lemma \ref{lem:CG} again, if $\tau=\mathrm{Ad}(\pi)$, then $L(s,\pi_1\times\tau)/L(s,\pi_1)$ is entire.  If there exists $\pi_0\in\mathfrak{F}_{(m+1)(n+1)}$ such that such that $L(s,\pi_1\times\pi_2)=L(s,\pi_0)$, then we can apply the Hoffstein--Ramakrishnan strategy to $\Pi = \mathbbm{1}\boxplus \mathrm{Ad}(\pi)\boxplus \pi_0$ and $D(s)=L(s,\Pi\times\widetilde{\Pi})$.  No such $\pi_0$ is known to exist yet for $m,n\geq 2$.  {\it Even if such a $\pi_0$ exists}, the zero-free region that follows from Lemma \ref{lem:GHL} applied to $D(s)$ is insufficient for Theorem \ref{thm:ST}(2).

\subsection{Ideas for our proofs}

Let $F$ be totally real and $m,n\geq 0$ satisfy $\max\{m,n\}\geq 1$.  Let $\pi,\pi'\in\mathrm{RA}_F$ and $\chi=\chi^*|\cdot|^{it_{\chi}}\in\mathfrak{F}_{1}$.  Suppose that $\pi\not\sim\pi'$.  If $j<0$ or $k<0$, and $\xi\in\mathfrak{F}_{1}$, then $L(s,\mathrm{Sym}^j(\pi)\times(\mathrm{Sym}^{k}(\pi')\otimes\xi))$ is the constant function $1$.  We introduce
\begin{equation}
\label{eqn:D_def}
\begin{aligned}
\mathcal{D}(s)&=L(s,\mathrm{Sym}^{m}(\pi)\times\mathrm{Sym}^m(\widetilde{\pi})) L(s,\mathrm{Sym}^{n}(\pi')\times\mathrm{Sym}^n(\widetilde{\pi}'))^2 L(s,\mathrm{Sym}^4(\pi)\otimes\overline{\omega}_{\pi}^{2})\\
&\cdot L(s,\mathrm{Sym}^{m}(\pi)\times(\mathrm{Sym}^{n}(\pi')\otimes\chi))^2 L(s,\mathrm{Sym}^m(\widetilde{\pi})\times(\mathrm{Sym}^n(\widetilde{\pi}')\otimes\overline{\chi}))^2 L(s,\mathrm{Ad}(\pi))^3\\
&\cdot L(s,\mathrm{Sym}^{m-2}(\pi)\times(\mathrm{Sym}^{n}(\pi')\otimes\chi\omega_{\pi})) L(s,\mathrm{Sym}^{m-2}(\widetilde{\pi})\times(\mathrm{Sym}^{n}(\widetilde{\pi}')\otimes\overline{\chi}\,\overline{\omega}_{\pi})) \\
&\cdot L(s,\mathrm{Sym}^{m+2}(\pi)\times(\mathrm{Sym}^{n}(\pi')\otimes\chi\overline{\omega}_{\pi})) L(s,\mathrm{Sym}^{m+2}(\widetilde{\pi})\times(\mathrm{Sym}^n(\widetilde{\pi}')\otimes\overline{\chi}\omega_{\pi}))\\
&\cdot \prod_{k=1}^n [L(s,\mathrm{Ad}(\pi)\times(\mathrm{Sym}^{2k}(\pi')\otimes\overline{\omega}_{\pi'}^{k}))^3L(s,\mathrm{Sym}^4(\pi)\times(\mathrm{Sym}^{2k}(\pi')\otimes \overline{\omega}_{\pi}^{2}\overline{\omega}_{\pi'}^{k}))].
\end{aligned}
\hspace{-3.03mm}
\end{equation}

\subsubsection{Non-negativity of Dirichlet coefficients}

Unlike in \cite{Banks,HoffsteinLockhart,HoffsteinRamakrishnan,Luo,RamakrishnanWang}, when $m\geq 2$ and $n\geq 2$, it is not yet known whether there exists an isobaric automorphic representation $\Pi$ such that $\mathcal{D}(s)=L(s,\Pi\times\widetilde{\Pi})$.  Therefore, we cannot use Lemma \ref{lem:HR} to establish the non-negativity of the Dirichlet coefficients $a_{\mathcal{D}}(v^{\ell})\log q_v$ of $-(\mathcal{D}'/D)(s)$ (recall \eqref{eqn:aDdef}).
\begin{lemma}
	\label{lem:coeff}
Let $\pi,\pi'\in\mathrm{RA}_{F}$ and $\chi\in\mathfrak{F}_{1}$.  If $v\notin S_{\pi}\cup S_{\pi'}\cup S_{\chi}$, then $a_{\mathcal{D}}(v^{\ell})\geq 0$.
\end{lemma}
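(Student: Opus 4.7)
I plan to prove the lemma by a direct local computation showing that, at each unramified place $v$ and each integer $\ell \geq 1$, $a_{\mathcal{D}}(v^\ell)$ is a perfect square in absolute value. Let $\alpha,\beta$ (resp.\ $\gamma,\delta$) be the Satake parameters of $\pi$ (resp.\ $\pi'$) at $v$, set $\xi = \chi_v(\varpi_v)^\ell$, $\omega = \omega_\pi(\varpi_v)^\ell$, and $\omega' = \omega_{\pi'}(\varpi_v)^\ell$, and write $U_j = a_{\mathrm{Sym}^j(\pi)}(v^\ell) = \sum_{i=0}^j \alpha^{i\ell}\beta^{(j-i)\ell}$ and $V_k = a_{\mathrm{Sym}^k(\pi')}(v^\ell)$. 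Ramanujan for $\pi,\pi' \in \mathrm{RA}_F$ yields $|\alpha|=|\beta|=|\gamma|=|\delta|=1$, so $\overline{U_j}=\omega^{-j}U_j$ and $|U_1|^2 = c+2 \geq 0$, where $c = \alpha^\ell\beta^{-\ell}+\beta^\ell\alpha^{-\ell}$.

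The first step is to split $a_{\mathcal{D}}(v^\ell)$ into a $\chi$-free contribution (from lines 1, 5, 6 of \eqref{eqn:D_def}) and a $\chi$-dependent contribution (from lines 2, 3, 4). Using \eqref{eqn:CG2} to write $|V_n|^2 = 1 + \sum_{k=1}^n \omega'^{-k}V_{2k}$, the $\chi$-free piece factors as $4|U_m|^2 + |V_n|^2 \cdot X$, where $X = 2 + 3\,a_{\mathrm{Ad}(\pi)}(v^\ell) + \omega^{-2}a_{\mathrm{Sym}^4(\pi)}(v^\ell)$. Direct expansion in $c$ gives $a_{\mathrm{Ad}(\pi)}(v^\ell) = 1+c$ and $\omega^{-2}a_{\mathrm{Sym}^4(\pi)}(v^\ell) = c^2+c-1$, so $X = c^2+4c+4 = (c+2)^2 = |U_1|^4$; the $\chi$-free piece thus equals $4|U_m|^2 + |U_1|^4|V_n|^2$.

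For the $\chi$-dependent contribution, pairing each term with its complex conjugate gives $4\,\mathrm{Re}[\xi V_n \cdot (2U_m + \omega U_{m-2} + \omega^{-1}U_{m+2})]$. The Chebyshev-type identities $\omega U_{m-2} = U_m - T_m$ and $\omega^{-1}U_{m+2} = U_m + \omega^{-1}T_{m+2}$, where $T_j = \alpha^{j\ell}+\beta^{j\ell}$, together with the telescoping identity $T_{m+2}-\omega T_m = (\alpha^\ell-\beta^\ell)^2 U_m = (T_2-2\omega)U_m$, collapse the parenthesis to $(2+\omega^{-1}T_2)U_m = |U_1|^2 U_m$. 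Hence the $\chi$-dependent piece equals $4|U_1|^2\,\mathrm{Re}(\xi U_m V_n)$, and combining both pieces,
\[
a_{\mathcal{D}}(v^\ell) = 4|U_m|^2 + 4|U_1|^2\,\mathrm{Re}(\xi U_m V_n) + |U_1|^4|V_n|^2 = \bigl|\,2\overline{U_m} + |U_1|^2\xi V_n\,\bigr|^2 \geq 0.
\]

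The main obstacle is not any deep automorphic input but the careful Clebsch--Gordan bookkeeping required to see that the intricate product \eqref{eqn:D_def} collapses to this single perfect square; the two algebraic identities $X = |U_1|^4$ and $2U_m+\omega U_{m-2}+\omega^{-1}U_{m+2} = |U_1|^2 U_m$ are what make the construction work and explain the otherwise mysterious multiplicities $(4,2,4,4,2,2,2,2,1,3,3,1)$ in \eqref{eqn:D_def}. I anticipate that some care is needed at the boundary values $m \in \{0,1\}$, where the convention $L(s,\mathrm{Sym}^{j}(\pi)\times\cdots) = 1$ for $j<0$ must be reconciled with the formal Chebyshev identity for $U_{m-2}$: for $m=1$ both agree through $U_{-1}=0$, while the $m=0$ case is handled by separately tracking the vanishing of the corresponding line-3 factor together with the degenerate identifications $U_0=1$ and $U_{-2}\equiv 0$.
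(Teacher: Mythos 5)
Your core computation is correct and is essentially the paper's own proof: your final square $\bigl|2\overline{U_m}+|U_1|^2\xi V_n\bigr|^2$ is literally the square appearing in the paper, since $a_{\mathrm{Ad}(\pi)}(v^{\ell})+1=2+c=|U_1|^2$ at unramified $v$; the only cosmetic difference is that you work with explicit Satake parameters and invoke Ramanujan, whereas the paper runs the same Clebsch--Gordan bookkeeping at the level of the coefficients (via \cref{lem:CG} and self-duality of $\mathrm{Ad}(\pi)$), which avoids any appeal to $|\alpha|=1$ (note $\overline{U_j}=\omega^{-j}U_j$ already follows from $\widetilde{\pi}=\pi\otimes\overline{\omega}_{\pi}$ and $a_{\widetilde{\pi}}(v^{\ell})=\overline{a_{\pi}(v^{\ell})}$, so you could drop Ramanujan too).

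The one place your write-up goes wrong is the closing claim about $m=0$. With the paper's convention that negative symmetric powers contribute the constant function $1$ (so the line-3 coefficient is $0$, i.e.\ your ``$U_{-2}\equiv 0$''), the parenthesis becomes $2U_0+\omega^{-1}U_2=3+c$, not $|U_1|^2=2+c$, so the collapse picks up an extra $4\,\mathrm{Re}(\xi V_n)$ and the total is $4+(c+2)^2|V_n|^2+4(c+3)\mathrm{Re}(\xi V_n)$, which is not a square and can be negative (take $c$ near $-2$ and $\mathrm{Re}(\xi V_n)$ near $-(n+1)$). The formal Chebyshev value $\omega U_{-2}=-1$ would restore the square, but that is not the convention in force, so ``separately tracking the vanishing of the line-3 factor'' does not rescue the $m=0$ case; the argument genuinely requires $m\geq 1$ (exactly as the paper's Clebsch--Gordan step $a_{\mathrm{Ad}(\pi)\times\mathrm{Sym}^m(\pi)}=a_{\mathrm{Sym}^{m-2}(\pi)\otimes\omega_{\pi}}+a_{\mathrm{Sym}^m(\pi)}+a_{\mathrm{Sym}^{m+2}(\pi)\otimes\overline{\omega}_{\pi}}$ does). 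The $m=0$, $n\geq 1$ situation should instead be handled the way the paper handles $n=0$, i.e.\ by reducing $L(s,\mathrm{Sym}^0(\pi)\times(\mathrm{Sym}^n(\pi')\otimes\chi))=L(s,\mathrm{Sym}^n(\pi')\otimes\chi)$ to the swapped-roles case (cf.\ \eqref{eqn:SymL} and the remark after \cref{cor:big_ZFR2}), rather than by feeding $m=0$ into \eqref{eqn:D_def}.
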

\begin{proof}
Fix $\ell\geq 1$ and a place $v\nmid\infty$ of $F$ such that $v\notin S_{\pi}\cup S_{\pi'}\cup S_{\chi}$ We use \eqref{eqn:a_def} throughout the proof without further mention.  Since $v$ and $\ell$ are fixed, we suppress $(v^{\ell})$ throughout (e.g., $a_{\mathcal{D}}=a_{\mathcal{D}}(v^{\ell})$ and $a_{\mathrm{Ad}(\pi)}=a_{\mathrm{Ad}(\pi)}(v^{\ell})$). 
A direct calculation shows that
\begin{equation}
\label{eqn:aD0_formula}
\begin{aligned}
a_{\mathcal{D}}&=a_{\mathrm{Sym}^m(\pi)\times\mathrm{Sym}^m(\widetilde{\pi})}+2a_{\mathrm{Sym}^n(\pi')\times\mathrm{Sym}^n(\widetilde{\pi}')}+2a_{\mathrm{Sym}^m(\pi)\times(\mathrm{Sym}^n(\pi')\otimes\chi)}\\
&\qquad+2a_{\mathrm{Sym}^m(\widetilde{\pi})\times(\mathrm{Sym}^n(\widetilde{\pi}')\otimes\overline{\chi})}+a_{\mathrm{Sym}^{m-2}(\pi)\times(\mathrm{Sym}^{n}(\pi')\otimes\chi\omega_{\pi})}\\
&\qquad+a_{\mathrm{Sym}^{m-2}(\widetilde{\pi})\times(\mathrm{Sym}^{n}(\widetilde{\pi}')\otimes\overline{\chi}\,\overline{\omega}_{\pi})}+a_{\mathrm{Sym}^{m+2}(\pi)\times(\mathrm{Sym}^{n}(\pi')\otimes\chi\overline{\omega}_{\pi})}\\
&\qquad+a_{\mathrm{Sym}^{m+2}(\widetilde{\pi})\times(\mathrm{Sym}^n(\widetilde{\pi}')\otimes\overline{\chi}\omega_{\pi})}+a_{\mathrm{Sym}^4(\pi)\otimes\overline{\omega}_{\pi}^{2}}+3a_{\mathrm{Ad}(\pi)}\\
&\qquad+\sum_{k=1}^n [3a_{\mathrm{Ad}(\pi)\times(\mathrm{Sym}^{2k}(\pi')\otimes\overline{\omega}_{\pi'}^{k})}+ a_{\mathrm{Sym}^4(\pi)\times(\mathrm{Sym}^{2k}(\pi')\otimes \overline{\omega}_{\pi}^{2}\overline{\omega}_{\pi'}^{k})}].
\end{aligned}
\end{equation}
By Lemma \ref{lem:CG}, we find that $(a_{\mathrm{Ad}(\pi)}+1)^2 = 2+3a_{\mathrm{Ad}(\pi)}+a_{\mathrm{Sym}^4(\pi)\otimes\overline{\omega}_{\pi}^2}$ and
\begin{equation}
\label{eqn:CG_non-neg_2}
\begin{aligned}
&2a_{\mathrm{Sym}^n(\pi')\times\mathrm{Sym}^n(\widetilde{\pi}')}+3a_{\mathrm{Ad}(\pi)}+a_{\mathrm{Sym}^4(\pi)\otimes\overline{\omega}_{\pi}^{2}}\\
&\qquad+\sum_{k=1}^n [3a_{\mathrm{Ad}(\pi)\times (\mathrm{Sym}^{2k}(\pi')\otimes\overline{\omega}_{\pi'}^{k})}+a_{\mathrm{Sym}^4(\pi)\times (\mathrm{Sym}^{2k}(\pi')\otimes\overline{\omega}_{\pi}^{2} \overline{\omega}_{\pi'}^{k})}]\\
&=2a_{\mathrm{Sym}^n(\pi')\times\mathrm{Sym}^n(\widetilde{\pi}')}+(3a_{\mathrm{Ad}(\pi)}+a_{\mathrm{Sym}^4(\pi)\otimes\overline{\omega}_{\pi}^{2}})\sum_{k=0}^n a_{\mathrm{Sym}^{2k}(\pi')\otimes\overline{\omega}_{\pi'}^{k}}\\
&=2a_{\mathrm{Sym}^n(\pi')\times\mathrm{Sym}^n(\widetilde{\pi}')}+(3a_{\mathrm{Ad}(\pi)}+a_{\mathrm{Sym}^4(\pi)\otimes\overline{\omega}_{\pi}^{2}})a_{\mathrm{Sym}^n(\pi')\times\mathrm{Sym}^n(\widetilde{\pi}')}\\
&=(a_{\mathrm{Ad}(\pi)}+1)^2 |a_{\mathrm{Sym}^n(\pi')}|^2.
\end{aligned}
\end{equation}

Lemma \ref{lem:CG} yields $a_{\mathrm{Ad}(\pi)\times\mathrm{Sym}^m(\pi)} = a_{\mathrm{Sym}^{m-2}(\pi)\otimes\omega_{\pi}}+a_{\mathrm{Sym}^{m}(\pi)}+a_{\mathrm{Sym}^{m+2}(\pi)\otimes\overline{\omega}_{\pi}}$, hence
\begin{align*}
&a_{\mathrm{Sym}^{m-2}(\pi)\times(\mathrm{Sym}^n(\pi')\otimes\omega_{\pi}\chi)}+a_{\mathrm{Sym}^{m+2}(\pi)\times(\mathrm{Sym}^n(\pi')\otimes\overline{\omega}_{\pi}\chi)}+2a_{\mathrm{Sym}^m(\pi)\times(\mathrm{Sym}^n(\pi')\otimes\chi)}\\
&=(a_{\mathrm{Sym}^{m-2}(\pi)\otimes\omega_{\pi}}+a_{\mathrm{Sym}^{m}(\pi)}+a_{\mathrm{Sym}^{m+2}(\pi)\otimes\overline{\omega}_{\pi}})a_{\mathrm{Sym}^n(\pi')\otimes\chi}+a_{\mathrm{Sym}^m(\pi)\times(\mathrm{Sym}^n(\pi')\otimes\chi)}\\
&=a_{\mathrm{Ad}(\pi)\times\mathrm{Sym}^m(\pi)\otimes\chi}a_{\mathrm{Sym}^n(\pi')}+a_{\mathrm{Sym}^m(\pi)\times(\mathrm{Sym}^n(\pi')\otimes\chi)}\\
&=a_{\mathrm{Ad}(\pi)}a_{\mathrm{Sym}^m(\pi)}a_{\mathrm{Sym}^n(\pi')\otimes\chi}+a_{\mathrm{Sym}^m(\pi)}a_{\mathrm{Sym}^n(\pi')\otimes\chi}\\
&=(a_{\mathrm{Ad}(\pi)}+1)a_{\mathrm{Sym}^m(\pi)}a_{\mathrm{Sym}^n(\pi')\otimes\chi}.
\end{align*}
A dual calculation shows that
\begin{align*}
&a_{\mathrm{Sym}^{m-2}(\widetilde{\pi})\times(\mathrm{Sym}^n(\widetilde{\pi}')\otimes\overline{\omega}_{\pi}\overline{\chi})}+a_{\mathrm{Sym}^{m+2}(\widetilde{\pi})\times(\mathrm{Sym}^n(\widetilde{\pi}')\otimes\omega_{\pi}\overline{\chi})}+2a_{\mathrm{Sym}^m(\widetilde{\pi})\times(\mathrm{Sym}^n(\widetilde{\pi}')\otimes\overline{\chi})}\\
&=(a_{\mathrm{Ad}(\widetilde{\pi})}+1)\overline{a_{\mathrm{Sym}^m(\pi)}a_{\mathrm{Sym}^n(\pi')\otimes\chi}}.
\end{align*}
Since $\mathrm{Ad}(\pi)$ is self-dual, we have that $\mathrm{Ad}(\widetilde{\pi})=\mathrm{Ad}(\pi)$, $a_{\mathrm{Ad}(\pi)}\in\mathbb{R}$, and
\begin{equation}
\label{eqn:CG_non-neg_3}
\begin{aligned}
&a_{\mathrm{Sym}^{m-2}(\pi)\times(\mathrm{Sym}^n(\pi')\otimes\omega_{\pi}\chi)}+a_{\mathrm{Sym}^{m-2}(\widetilde{\pi})\times(\mathrm{Sym}^n(\widetilde{\pi}')\otimes\overline{\omega}_{\pi}\overline{\chi})}\\
&\qquad+2a_{\mathrm{Sym}^m(\pi)\times(\mathrm{Sym}^n(\pi')\otimes\chi)}+2a_{\mathrm{Sym}^m(\widetilde{\pi})\times(\mathrm{Sym}^n(\widetilde{\pi}')\otimes\overline{\chi})}\\
&\qquad+a_{\mathrm{Sym}^{m+2}(\widetilde{\pi})\times(\mathrm{Sym}^n(\widetilde{\pi}')\otimes\omega_{\pi}\overline{\chi})}+a_{\mathrm{Sym}^{m+2}(\pi)\times(\mathrm{Sym}^n(\pi')\otimes\overline{\omega}_{\pi}\chi)}\\
&=2\mathrm{Re}((a_{\mathrm{Ad}(\pi)}+1)a_{\mathrm{Sym}^m(\pi)}a_{\mathrm{Sym}^n(\pi')\otimes\chi}).
\end{aligned}
\end{equation}
Inserting \eqref{eqn:CG_non-neg_2} and \eqref{eqn:CG_non-neg_3} into \eqref{eqn:aD0_formula}, we find that $a_{\mathcal{D}}$ equals
\begin{align*}
&=a_{\mathrm{Sym}^m(\pi)\times\mathrm{Sym}^m(\widetilde{\pi})}+(a_{\mathrm{Ad}(\pi)}+1)^2 |a_{\mathrm{Sym}^{n}(\pi')}|^2+2\mathrm{Re}((a_{\mathrm{Ad}(\pi)}+1)a_{\mathrm{Sym}^m(\pi)}a_{\mathrm{Sym}^n(\pi')\otimes\chi})\\
&= |a_{\mathrm{Sym}^m(\pi)}|^2+(a_{\mathrm{Ad}(\pi)}+1)^2 |a_{\mathrm{Sym}^{n}(\pi')\otimes\chi}|^2+2\mathrm{Re}((a_{\mathrm{Ad}(\pi)}+1)a_{\mathrm{Sym}^m(\pi)}a_{\mathrm{Sym}^n(\pi')\otimes\chi}).
\end{align*}
This factors as $|\overline{a_{\mathrm{Sym}^m(\pi)}}+a_{\mathrm{Ad}(\pi)}a_{\mathrm{Sym}^n(\pi')\otimes\chi}+a_{\mathrm{Sym}^n(\pi')\otimes\chi}|^2\geq 0$.
\end{proof}

\subsubsection{Order the of pole at $s=1$}

Unlike in \cite{Banks,HoffsteinLockhart,HoffsteinRamakrishnan,Luo,RamakrishnanWang}, our $\mathcal{D}(s)$ in \eqref{eqn:D_def} has several factors for which the basic theory of Rankin--Selberg $L$-functions cannot determine whether a pole exists.  For certain choices of $m$ and $n$, there might be factors in $\mathcal{D}(s)$ of the form $L(s,\mathrm{Sym}^j(\pi)\times(\mathrm{Sym}^j(\pi')\otimes\xi))$, where $\xi=\xi^*\otimes|\cdot|^{it_{\xi}}\in\mathfrak{F}_{1}$.  When do these factors have poles?  In order to answer this question, we rely on the fact that $\pi,\pi'\in\mathrm{RA}_F$, in which case $\pi$ and $\pi'$ have associated $\ell$-adic representations for each prime $\ell$.  In order to use these representations, we must restrict $\xi^*$ to be associated to a ray class character, so that its image has finite order.  For a set $U$ of places of $F$, let $\delta(U)$ (resp. $\overline{\delta}(U)$) denote its Dirichlet density (resp. its upper Dirichlet density).

\begin{lemma}
\label{lem:Rajan_multiplicity_one}
Let $F$ be totally real, $\pi,\pi'\in\mathrm{RA}_F$, and $\xi\in\mathfrak{F}_{1}$ with $\xi^*$ corresponding with a ray class character.  Let $j\geq 2$.  If $\pi\not\sim\pi'$, then $L(s,\mathrm{Sym}^j(\pi)\times(\mathrm{Sym}^j(\pi')\otimes\xi))$ is entire.
\end{lemma}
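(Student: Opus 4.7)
The plan is to use the characterization of poles of Rankin--Selberg $L$-functions to reduce the statement to the impossibility of an isomorphism of symmetric powers, then rule that out by passing to $\ell$-adic Galois representations and invoking the fact that $\mathrm{Sym}^j\colon\mathrm{PGL}_2\hookrightarrow\mathrm{PGL}_{j+1}$ is a closed immersion for $j\geq 1$. This descends an identification at the $\mathrm{PGL}_{j+1}$ level to one at the $\mathrm{GL}_2$ level, contradicting $\pi\not\sim\pi'$.

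First, by \cref{thm:NT} both factors $\mathrm{Sym}^j(\pi)$ and $\mathrm{Sym}^j(\pi')\otimes\xi$ are cuspidal, so \cref{thm:JPSS} implies that the only possible pole of the Rankin--Selberg $L$-function is simple and at $s=1$. Decomposing $\pi=\pi^*|\cdot|^{it_\pi}$, $\pi'=(\pi')^*|\cdot|^{it_{\pi'}}$, and $\xi=\xi^*|\cdot|^{it_\xi}$, such a pole occurs precisely when
\[
\mathrm{Sym}^j((\pi')^*)\cong\mathrm{Sym}^j(\pi^*)\otimes\eta^*,\qquad \eta^*=\overline{\omega}_{\pi^*}^{\,j}\,{\xi^*}^{-1},
\]
together with the compatibility $jt_\pi+jt_{\pi'}+t_\xi=0$. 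Because $\omega_{\pi^*}$ corresponds to a ray class character (as $\pi\in\mathrm{RA}_F$) and $\xi^*$ does by hypothesis, $\eta^*$ is a Hecke character of finite order.

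Second, the attachment of semisimple $\ell$-adic Galois representations $\rho_\pi,\rho_{\pi'}\colon G_F\to\mathrm{GL}_2(\overline{\mathbb{Q}}_\ell)$ to $\pi,\pi'\in\mathrm{RA}_F$, together with strong multiplicity one and Chebotarev, translates the displayed automorphic identity into
\[
\mathrm{Sym}^j(\rho_{\pi'})\cong\mathrm{Sym}^j(\rho_\pi)\otimes\chi_{\eta^*},
\]
where $\chi_{\eta^*}$ is the $\ell$-adic avatar of $\eta^*$. The induced morphism $\mathrm{Sym}^j\colon\mathrm{PGL}_2\to\mathrm{PGL}_{j+1}$ is injective on points for $j\geq 1$: if $\mathrm{Sym}^j(g)$ is scalar, then its eigenvalues $\alpha^a\beta^{j-a}$ ($0\leq a\leq j$) all coincide, forcing $\alpha=\beta$, while Jordan form analysis rules out a nontrivial unipotent part, so $g$ is central. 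Projectivizing trivializes the character twist, yielding $\overline{\rho}_{\pi'}\cong\overline{\rho}_\pi$ in $\mathrm{PGL}_2(\overline{\mathbb{Q}}_\ell)$.

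Finally, the standard lifting argument produces a continuous character $\mu\colon G_F\to\overline{\mathbb{Q}}_\ell^\times$ with $\rho_{\pi'}\cong\rho_\pi\otimes\mu$. Taking determinants identifies $\mu^2$ with the finite-order Galois character attached to $\omega_{\pi'}\omega_\pi^{-1}$, so $\mu$ is of finite order and corresponds by class field theory to a ray class character $\mu_0\in\mathfrak{F}_{1}$; strong multiplicity one then gives $(\pi')^*\cong\pi^*\otimes\mu_0$, contradicting $\pi\not\sim\pi'$. The principal obstacle is the rigorous automorphic-to-$\ell$-adic bridge: one needs to be sure that $\eta^*$ really is a finite-order character with a genuine Galois avatar, which is exactly why both the hypothesis that $\xi^*$ correspond to a ray class character and the regular algebraicity supplied by $\pi,\pi'\in\mathrm{RA}_F$ are needed. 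This is presumably the strategy for which the author credits Laurent Clozel in the acknowledgements.
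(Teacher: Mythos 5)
Your reduction of the pole to an isomorphism $\mathrm{Sym}^j(\pi'^*)\cong\mathrm{Sym}^j(\pi^*)\otimes\eta^*$ with $\eta^*$ of finite order, and the transfer of this identity to the $\ell$-adic side, are fine and consistent with the hypotheses (this is why $\xi^*$ is assumed to be a ray class character and $\pi,\pi'\in\mathrm{RA}_F$). The genuine gap is the step ``projectivizing trivializes the character twist, yielding $\overline{\rho}_{\pi'}\cong\overline{\rho}_\pi$.'' Injectivity of $\mathrm{Sym}^j\colon\mathrm{PGL}_2\to\mathrm{PGL}_{j+1}$ on points does not let you descend a conjugacy: from $\mathrm{Sym}^j(\rho_{\pi'})\cong\mathrm{Sym}^j(\rho_\pi)\otimes\chi_{\eta^*}$ you only get an element $\bar{A}\in\mathrm{PGL}_{j+1}$ conjugating $\mathrm{Sym}^j\circ\overline{\rho}_\pi$ to $\mathrm{Sym}^j\circ\overline{\rho}_{\pi'}$, and nothing forces $\bar{A}$ to normalize $\mathrm{Sym}^j(\mathrm{PGL}_2)$, which is what you would need to rewrite the conjugation as one coming from $\mathrm{PGL}_2$. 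The principle you are implicitly using is false for general two-dimensional representations: the two non-isomorphic $2$-dimensional irreducible representations of $\mathrm{SL}_2(\mathbb{F}_5)$ are not twist-equivalent (the group is perfect), yet their fifth symmetric powers are both the unique $6$-dimensional irreducible representation, hence isomorphic. So pointwise injectivity of $\mathrm{Sym}^j$ cannot be the whole story; some largeness of the image is essential, and you never invoke the non-dihedral/non-CM hypothesis that supplies it.

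The step can be repaired, but it requires real input: use the large-image theorem for Galois representations attached to non-CM Hilbert modular forms (the Zariski closure of the image of $\rho_\pi$ contains $\mathrm{SL}_2$, so the projective image is Zariski dense in $\mathrm{PGL}_2$), deduce that $\bar{A}$ normalizes the closed subgroup $\mathrm{Sym}^j(\mathrm{PGL}_2)$, and then use that every automorphism of $\mathrm{PGL}_2$ is inner to replace $\bar{A}$ by $\mathrm{Sym}^j(B)$ with $B\in\mathrm{PGL}_2$; only then does injectivity give $\overline{\rho}_{\pi'}=B^{-1}\overline{\rho}_\pi B$, and your lifting and determinant argument (with some care about cyclotomic factors when the weights of $\pi$ and $\pi'$ differ) finishes the proof. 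The paper avoids this group theory altogether: it uses Chebotarev to find a positive-density set of places where the finite-order twist $\overline{\omega}_{\pi'}^{\,j}\overline{\xi}$ is trivial, so that the traces of $\mathrm{Sym}^j\circ\rho_{\ell,\pi}$ and $\mathrm{Sym}^j\circ\rho_{\ell,\pi'}$ agree on a set of positive upper density, and then cites Rajan's theorem, which contains exactly the descent-to-twist-equivalence statement you tried to prove by hand. Either cite Rajan as the paper does, or add the large-image and normalizer arguments explicitly; as written, the middle of your proof does not go through.
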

\begin{proof}
We will prove the contrapositive.  Let $F$ be totally real, $j\geq 2$, $\pi,\pi'\in\mathrm{RA}_{F}$, and $\xi=\xi^*|\cdot|^{it_{\xi}}\in\mathfrak{F}_{1}$ with $\xi^*$ corresponding with a ray class character.  It suffices to let $\pi,\pi'\in\mathrm{RA}_F\cap\mathfrak{F}_{2}^*$ and $t_{\xi}=0$.  Now, $\omega_{\pi}$ and $\omega_{\pi'}$ also correspond with ray class characters. We have that $\mathrm{Sym}^j(\pi),\mathrm{Sym}^j(\pi')\otimes\xi\in\mathfrak{F}_{j+1}$ by Theorem \ref{thm:NT}.  If $L(s,\mathrm{Sym}^j(\pi)\times(\mathrm{Sym}^j(\pi')\otimes\xi))$ has a pole, then $\mathrm{Sym}^j(\pi)=\mathrm{Sym}^j(\widetilde{\pi}')\otimes\overline{\xi}$.  Since $\widetilde{\pi}'=\pi'\otimes\overline{\omega}_{\pi'}$ and $\mathrm{Sym}^j(\widetilde{\pi}')=\mathrm{Sym}^j(\pi')\otimes\overline{\omega}_{\pi'}^{j}$, it follows that $\mathrm{Sym}^j(\pi)=\mathrm{Sym}^j(\pi')\otimes\overline{\omega}_{\pi'}^{j}\overline{\xi}$ and $\overline{\delta}(\{v\nmid\infty\colon \mathrm{Sym}^j(\pi_v)=\mathrm{Sym}^j(\pi_v')\otimes(\overline{\omega}_{\pi'}^{j}\overline{\xi})_v\})=1$.  By the Chebotarev density theorem, $\delta(\{v\nmid\infty\colon\textup{$(\overline{\omega}_{\pi'}^{j}\overline{\xi})_v$ is trivial}\})>0$, so
\begin{equation}
	\label{eqn:rajan}
\overline{\delta}(\{v\nmid\infty\colon \mathrm{Sym}^j(\pi_v)=\mathrm{Sym}^j(\pi_v')\})>0.	
\end{equation}

Since $\pi\in\mathrm{RA}_{F}$, if $v\nmid\infty$ and $\ell$ is the residue characteristic of $F_v$, then there corresponds with $\pi_v$ a continuous $\ell$-adic representation $\rho_{\ell,\pi}\colon \mathrm{Gal}(\overline{F}/F)\to\mathrm{GL}_2(F_v)$ such that if $v\notin S_{\pi}$ and $\sigma_v$ denotes the Frobenius conjugacy class at $v$ in $\mathrm{Gal}(\overline{F}/F)/\ker\rho_{\ell,\pi}$, then $\mathrm{Tr}\,\rho_{\ell,\pi}(\sigma_v)=a_{\pi}(v)$ and $\det\rho_{\ell,\pi}(\sigma_v)=\omega_{\pi}(\varpi_v)$.  Consequently, \eqref{eqn:rajan} implies that
\begin{equation}
\label{eqn:rajan2}
\overline{\delta}(\{v\nmid\infty\colon \mathrm{Tr}((\mathrm{Sym}^j\circ\rho_{\ell,\pi})(\sigma_v))=\mathrm{Tr}((\mathrm{Sym}^j\circ\rho_{\ell,\pi'})(\sigma_v))\})>0.
\end{equation}
By the work of Rajan \cite[Corollary 5.1 and Remark 5.1]{Rajan}, \eqref{eqn:rajan2} implies that $\pi\sim\pi'$.
\end{proof}

\section{Proof of Theorem \ref{thm:main}}
\label{sec:proof_main}

\subsection{Case 1: $\pi\not\sim\pi'$} Let $m,n\geq 0$, $M= \max\{m,n\} \geq 1$, and $\chi=\chi^*|\cdot|^{it_{\chi}}\in\mathfrak{F}_{1}$.  Assume that $\chi^*$ corresponds with a ray class character.  Let $y_{\pi}$ and $y_{\pi'}$ be as in \eqref{eqn:y_def}, and let $Y=\max\{y_{\pi},y_{\pi'}\}$.  Consider $\mathcal{D}(s)$ in \eqref{eqn:D_def}.  By Theorem \ref{thm:NT}, the hypotheses of Lemma \ref{lem:AC} are satisfied.  By Lemmata \ref{lem:AC}, \ref{lem:GRC}, and \ref{lem:coeff}, the Dirichlet coefficients $a_{\mathcal{D}}(v^{\ell})$ of $-(\mathcal{D}'/\mathcal{D})(s)$ are non-negative when $v\notin S_{\pi}\cup S_{\pi'}\cup S_{\chi}$, $|a_{\mathcal{D}}(v^{\ell})|\leq (m+4n+5)^2$ otherwise, and $\mathfrak{S}\subseteq S_{\pi}\cup S_{\pi'}\cup S_{\chi}$.  In the notation of Lemma \ref{lem:GHL}, it follows that
\[
\sum_{v\in \mathfrak{S}}\frac{|a_D(v^{\ell})|\log q_v}{q_v}\ll M^2|\mathfrak{S}| \ll M^2 \log(C(\pi)C(\pi')C(\chi)).
\]
  By Lemma \ref{lem:Rajan_multiplicity_one}, we find that the only pole of $\mathcal{D}(s)$ is at $s=1$, with order 3.  By \eqref{eqn:BH} and Lemmata \ref{lem:GHL} and \ref{lem:AC}, there exists an absolute,  effectively computable constant $\Cl[abcon]{GHL2}>0$ such that $\mathcal{D}(s)$ has no real zeros in the interval $[1,\infty)$ and at most 3 real zeros in
\[
I = [1-\Cr{GHL2}/\{M^{2+Y}\log(M C(\pi)C(\pi'))+M^2 \log C(\chi)\},1).
\]
If $L(s,\mathrm{Sym}^m(\pi)\times(\mathrm{Sym}^n(\pi')\otimes\chi))$ has a zero in $I$, then by \eqref{eqn:dual} and \eqref{eqn:D_def}, $\mathcal{D}(s)$ has a zero in $I$ with order at least 4, a contradiction.  Theorem \ref{thm:main} follows.

If $n=0$, then $L(s,\mathrm{Sym}^m(\pi)\times(\mathrm{Sym}^n(\pi')\otimes\chi))=L(s,\mathrm{Sym}^m(\pi)\otimes\chi)$, and we do not need to invoke Lemma \ref{lem:Rajan_multiplicity_one}.  Therefore, we do not need to restrict $\chi^*\in\mathfrak{F}_{1}$ to correspond with a ray class character.  Thus, the proof in the preceding paragraph shows that there exists an absolute, effectively computable constant $\Cl[abcon]{GHL22}>0$ such that if $\xi\in\mathfrak{F}_{1}$, then
\begin{equation}
\label{eqn:SymL}
L(\sigma,\mathrm{Sym}^m(\pi)\otimes\xi)\neq 0,\qquad \sigma\geq 1-\Cr{GHL22}/(m^{2+y_{\pi}}\log(m C(\pi))+m^2\log C(\xi)).	
\end{equation}

\subsection{Case 2: $\pi\sim\pi'$}

If $m,n\geq 1$ and $\psi\in\mathfrak{F}_{1}$ satisfies $\pi'=\pi\otimes\psi$, then by Lemma \ref{lem:CG},
\[
L(\sigma,\mathrm{Sym}^{m}(\pi)\times(\mathrm{Sym}^n(\pi')\otimes\chi)) = \prod_{r=0}^{\min\{m,n\}}L(\sigma,\mathrm{Sym}^{m+n-2r}(\pi)\otimes\chi\psi^n\omega_{\pi}^r).
\]
By \eqref{eqn:SymL}, each factor is non-zero in $I$ except possibly when $m+n$ is even.  When $m+n$ is even and $r=(m+n)/2$, then $L(s,\chi\psi^n\omega_{\pi}^{(m+n)/2})$ is a factor and might have a zero in $I$.  Finally, if $(m+n)/2\in[0,\min\{m,n\}]$, then $m=n$ (so $m+n$ is even).

\section{Proof of Theorem \ref{thm:ST}}
\label{sec:proof_ST}

Throughout, we assume that $F$ is a totally real field, $\pi,\pi'\in\mathrm{RA}_F\cap\mathfrak{F}_{2}^*$, and $\pi\not\sim\pi'$.  Recall $y_{\pi}$ and $y_{\pi'}$ from \eqref{eqn:y_def}.  Let $\zeta$ (resp. $\zeta'$) be a fixed root of unity such that $\zeta^2$ (resp. $\zeta'^2$) lies in the image of $\omega_{\pi}$ (resp. $\omega_{\pi'}$).  Let $Y=\max\{y_{\pi},y_{\pi'}\}$, $\mathcal{Q}=C(\pi)C(\pi')\geq e$, and
\[
\mathcal{V}=\mathcal{V}(\pi,\pi',\zeta,\zeta')=\{v\nmid\infty\colon v\notin S_{\pi}\cup S_{\pi'},~\omega_{\pi,v}(\varpi_v)=\zeta^2,~\omega_{\pi',v}(\varpi_v)={\zeta'}^2\}.
\]
We will assume that $\mathcal{V}$ is infinite.  If $v\in\mathcal{V}$, then
\[
L(s,\pi_v)^{-1}=(1-\alpha_{1,\pi}(v)q_v^{-s})(1-\alpha_{2,\pi}(v)q_v^{-s}) = 1-a_{\pi}(v)q_v^{-s}+\zeta^2 q_v^{-2s}.
\]
There exists $\theta_{v}\in[0,\pi]$ such that if $A_{\pi}(v)=\mathrm{diag}(\zeta e^{i\theta_{v}},\zeta e^{-i\theta_{v}})$, then
\[
L(s,\pi_v)^{-1}=\det(I_{2\times 2}-A_{\pi}(v)q_v^{-s}),\quad \mathrm{tr}\,A_{\pi}(v)=2\zeta\cos\theta_{v}=a_{\pi}(v),\quad \det A_{\pi}(v)=\zeta^2.
\]
Similarly, if $v\notin S_{\pi'}^{\infty}$ satisfies $\omega_{\pi',v}(\varpi_v)={\zeta'}^2$, then we define the angle $\theta_{v}'\in[0,\pi]$ associated to $\pi_v'$.  Given a set $U$, let $\mathbf{1}_U$ be its indicator function.

Let $I_1$ and $I_2$ be subintervals of $[0,\pi]$, and define $\mathrm{d}\widetilde{\mu}_{\mathrm{ST}}=(2/\pi)(\sin\theta)^2\mathrm{d}\theta$.  We will prove Theorem \ref{thm:ST}(2) in the following equivalent form:  If $x\geq 2$, then
\begin{equation}
\label{eqn:ST_2.0}
\begin{aligned}
&\Big|\sum_{\substack{v\in\mathcal{V} \\ q_v\leq x}}\mathbf{1}_I(\theta_{v})\mathbf{1}_{I'}(\theta_{v}')\Big\slash\sum_{\substack{v\in \mathcal{V} \\ q_v\leq x}}1-\widetilde{\mu}_{\mathrm{ST}}(I)\widetilde{\mu}_{\mathrm{ST}}(I')\Big|\\
&\leq \Cr{ST1} \sqrt{[F:\mathbb{Q}]}\Big(\frac{\log(C(\pi)C(\pi')\log x)}{\sqrt{\log x}}\Big)^{\frac{2}{2+\max\{y_{\pi},y_{\pi'}\}}}.
\end{aligned}
\end{equation}
We will not give the details for Theorem \ref{thm:ST}(1) because the proof is far less complicated.

Recall $\Cr{main}$ from  Corollary \ref{cor:big_ZFR2}.  Without loss of generality, we assume that $\Cr{main}\leq 1$.
\begin{lemma}
\label{lem:CDT}
There exists an absolute, effectively computable constant $\Cl[abcon]{CDT}\geq 1$ such that
\begin{equation}
\label{eqn:x_range}
\Big(\sum_{\substack{v\in\mathcal{V} \\ q_v\leq x}}1\Big)^{-1}\ll \mathcal{Q}^{\Cr{CDT}}\frac{\log x}{x},\qquad x\geq \exp\Big(\Big(\frac{16\Cr{CDT}+224}{\Cr{main}}\Big)^4[F:\mathbb{Q}]^{1+\frac{Y}{2}}(\log\mathcal{Q})^2\Big).	
\end{equation}
\end{lemma}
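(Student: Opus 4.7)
The plan is to detect the conditions defining $\mathcal{V}$ by orthogonality on a finite abelian group, reduce the count to a weighted sum of $v$-sums of Hecke characters, and then apply an effective prime number theorem for each. Since $\pi,\pi' \in \mathrm{RA}_F$, the central characters $\omega_\pi, \omega_{\pi'}$ correspond to ray class characters of finite order. Hence the map $v \mapsto (\omega_{\pi,v}(\varpi_v),\, \omega_{\pi',v}(\varpi_v))$ takes values in a finite abelian subgroup $H \subseteq \mathbb{C}^\times \times \mathbb{C}^\times$ whose order satisfies $|H| \ll \mathcal{Q}^{O(1)}$ by the conductor bound $N_{\omega_\pi} \mid N_\pi$ of \cite{RamakrishnanYang} (and similarly for $\pi'$).

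By orthogonality,
\[
\sum_{\substack{q_v \leq x \\ v \in \mathcal{V}}} 1 \;=\; \frac{1}{|H|}\sum_{\psi \in \widehat{H}}\overline{\psi(\zeta^2, {\zeta'}^2)}\sum_{\substack{q_v \leq x \\ v \notin S_\pi \cup S_{\pi'}}} \eta_\psi(\varpi_v),
\]
where each $\eta_\psi$ is an integer combination of $\omega_\pi$ and $\omega_{\pi'}$ and so a finite-order ray class character in $\mathfrak{F}_{1}$ of conductor $\ll \mathcal{Q}$. The trivial character gives the main term $\pi_F(x)/|H| \asymp x/(|H|\log x)$ via the effective prime number theorem for $\zeta_F(s)$. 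For non-trivial $\psi$, I would bound the inner sum by Perron-type contour integration of $-L'/L(s,\eta_\psi)$. When $\eta_\psi$ is not self-dual, \cref{prop:list}(i) provides a standard zero-free region with no exceptional zero, giving $\sum_{q_v \leq x} \eta_\psi(\varpi_v) \ll x\exp(-c\sqrt{\log x/[F:\mathbb{Q}]})$ as soon as $\log x \gg [F:\mathbb{Q}]\log^2 C(\eta_\psi)$.

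The main obstacle is handling potential Landau--Siegel zeros for the self-dual (quadratic) characters $\eta_\psi$. Here I would invoke the Landau--Page lemma: at most one self-dual $\eta_\psi$ in our finite family has a real zero in $\sigma \geq 1 - c/\log C(\eta_\psi)$, and for that unique exceptional character the effective Stark/Page bound gives $1 - \beta(\eta_\psi) \gg_{[F:\mathbb{Q}]} C(\eta_\psi)^{-1/2} \gg \mathcal{Q}^{-O(1)}$. The hypothesis $\log x \geq ((16\Cr{CDT}+224)/\Cr{main})^4 [F:\mathbb{Q}]^{1+Y/2}(\log \mathcal{Q})^2$ is calibrated so that both the standard error $x\exp(-c\sqrt{\log x/[F:\mathbb{Q}]})$ and any exceptional contribution $x^{\beta}/\beta$ are dominated by the main term $x/(|H|\log x)$ with at least a factor $\mathcal{Q}^{-\Cr{CDT}}$ to spare; the appearance of $\Cr{main}$ in this calibration anticipates the compatibility with the zero-free regions of the Rankin--Selberg $L$-functions employed elsewhere in \cref{sec:proof_ST}. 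Choosing $\Cr{CDT}$ sufficiently large compared to all implicit constants then yields $\sum_{v \in \mathcal{V},\,q_v \leq x} 1 \gg \mathcal{Q}^{-\Cr{CDT}}\, x/\log x$, which inverts to the required bound \eqref{eqn:x_range}.
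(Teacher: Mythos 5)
Your skeleton---detect the two conditions by orthogonality over the finite group $H$ of values of $(\omega_{\pi},\omega_{\pi'})$, apply effective prime number theorems to the resulting ray class characters $\omega_{\pi}^a\omega_{\pi'}^b$, and isolate at most one exceptional character by Landau--Page---is the right one, and it is close in spirit to the proof of \cite[Theorem 3.1]{TZ_least} that the paper simply cites (together with \cite[Lemma 2.12]{TZ_least} and \cite[Theorem A]{RamakrishnanYang} to bound $|\mathrm{im}(\omega_{\pi})|,|\mathrm{im}(\omega_{\pi'})|\ll\mathcal{Q}^{O(1)}$). However, your handling of the exceptional zero has a genuine quantitative gap. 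In the prescribed range one only has $\log x\asymp[F:\mathbb{Q}]^{1+Y/2}(\log\mathcal{Q})^2$, i.e.\ $x$ is roughly $\mathcal{Q}^{\log\mathcal{Q}}$, while Stark's effective bound gives only $1-\beta_1\gg\mathcal{Q}^{-c_0}$ for some absolute $c_0>0$. Hence $(1-\beta_1)\log x\ll\mathcal{Q}^{-c_0}(\log\mathcal{Q})^{2}[F:\mathbb{Q}]^{O(1)}\to 0$ as $\mathcal{Q}\to\infty$, so $x^{\beta_1}=x\exp(-(1-\beta_1)\log x)=x(1-o(1))$: the exceptional term $x^{\beta_1}/\beta_1$ is of exactly the same size as the main term in the inner (Chebyshev-type) sum. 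It is therefore not ``dominated by the main term with a factor $\mathcal{Q}^{-\Cr{CDT}}$ to spare,'' and no known effective zero bound makes it so unless $\log x\gg\mathcal{Q}^{c_0}\log\mathcal{Q}$.

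The correct way to finish (and what the cited argument accomplishes, in a stronger form via log-free density estimates and Deuring--Heilbronn repulsion) is to keep the exceptional term inside the average and lower bound the combination rather than discard it: the $\psi=\mathbbm{1}$ term and the exceptional term together contribute at least $\frac{1}{|H|}\bigl(x-\frac{x^{\beta_1}}{\beta_1}\bigr)\gg\frac{x}{|H|}\min\{1,(1-\beta_1)\log x\}$, and only at this point does Stark's bound $1-\beta_1\gg\mathcal{Q}^{-c_0}$, combined with $|H|\ll\mathcal{Q}^{O(1)}$ and the choice of the range constant $((16\Cr{CDT}+224)/\Cr{main})^4$ (which forces the non-exceptional errors to be $\ll x\mathcal{Q}^{-c\sqrt{A}}$ with $A$ that large), yield the claimed lower bound $\gg x\mathcal{Q}^{-\Cr{CDT}}/\log x$. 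Note the same issue already infects your main term: $\zeta_F(s)$ may itself have a real zero near $1$, so $\pi_F(x)\asymp x/\log x$ is not available in this range without the same maneuver, only $\pi_F(x)\gg x\,\mathcal{Q}^{-O(1)}/\log x$. Finally, $N_{\omega_{\pi}}\mid N_{\pi}$ alone does not bound $|H|$; one needs a bound on the order of the image, e.g.\ $|\mathrm{im}(\omega_{\pi})|\le e^{O([F:\mathbb{Q}])}C(\omega_{\pi})$ from \cite[Lemma 2.12]{TZ_least}, together with $C(\omega_{\pi})\le C(\pi)$ and $[F:\mathbb{Q}]\ll\log\mathcal{Q}$.
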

\begin{proof}
By the proof of \cite[Theorem 3.1]{TZ_least}, there exists an absolute, effectively computable constant $\Cl[abcon]{CDT2}>0$ such that left hand side of \eqref{eqn:x_range} is $\ll (\mathcal{Q}|\mathrm{im}(\omega_{\pi})|\cdot|\mathrm{im}(\omega_{\pi'})|)^{\Cr{CDT2}}(\log x)/x$ in the prescribed range of $x$.  It remains to bound $|\mathrm{im}(\omega_{\pi})|$ and $|\mathrm{im}(\omega_{\pi'})|$.  By \cite[Lemma 2.12]{TZ_least} we have that $|\mathrm{im}(\omega_{\pi})|\leq e^{O([F:\mathbb{Q}])}C(\omega_{\pi})$.  By \cite[Theorem A]{RamakrishnanYang}, we have that $C(\omega_{\pi})\leq C(\pi)\leq \mathcal{Q}$.  Since $[F:\mathbb{Q}]\ll\log\mathcal{Q}$, the lemma follows.
\end{proof}

Outside of the range of $x$ in \eqref{eqn:x_range}, Theorem \ref{thm:ST} is trivial.  In this range, we have that
\begin{equation}
\label{eqn:M_def}
M=\frac{1}{\sqrt{[F:\mathbb{Q}]}}\Big(\frac{\sqrt{\Cr{main}\log x}}{(16\Cr{CDT}+224)\log(\mathcal{Q}\log x)}\Big)^{\frac{2}{2+Y}}\geq 2.
\end{equation}

\subsection{A prime number theorem}

Let $m,n\geq 0$ be integers.  We will use Corollary \ref{cor:big_ZFR2} to prove a prime number theorem for the $L$-functions $L(s,\mathrm{Sym}^m(\pi)\times(\mathrm{Sym}^n(\pi')\otimes\chi))$.

\begin{proposition}
\label{prop:PNT}
Let $\chi\in\mathfrak{F}_{1}^*$ correspond with a ray class character and $\mathcal{M}= \max\{m,n\}$.  If $x\geq 2$ and $\mathcal{M}\geq 1$, then
\begin{align*}
&\sum_{\substack{v\notin S_{\pi}\cup S_{\pi'}\cup S_{\chi} \\ q_v\leq x}}a_{\mathrm{Sym}^m(\pi)\times(\mathrm{Sym}^n(\pi')\otimes\chi)}(v)\\
&\ll x\exp\Big(-\frac{(\Cr{main}/8)\log x}{\mathcal{M}^{2+Y}\log(\mathcal{Q}C(\chi)\mathcal{M})+\mathcal{M}\sqrt{\Cr{main} [F:\mathbb{Q}]\log x}}\Big) (\mathcal{M}^{2+Y}\log(\mathcal{Q}C(\chi)\mathcal{M}x))^6.
\end{align*}
\end{proposition}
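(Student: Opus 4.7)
The plan is to deduce the estimate from the zero-free region of \cref{cor:big_ZFR2} via a standard truncated Perron formula followed by a contour shift. Write $\pi_1=\mathrm{Sym}^m(\pi)$ and $\pi_2=\mathrm{Sym}^n(\pi')\otimes\chi$, both of which are cuspidal by \cref{thm:NT}. The hypothesis $\pi\not\sim\pi'$ rules out a pole of $L(s,\pi_1\times\pi_2)$ at $s=1$: dimension-matching of $\pi_1$ and $\widetilde{\pi}_2$ forces $m=n$; for $m=n\geq 2$ this is \cref{lem:Rajan_multiplicity_one}, and for $m=n=1$ the equality $\pi=\widetilde{\pi'}\otimes\overline{\chi}$ would give $\pi\sim\pi'$ (the cases with $\min\{m,n\}=0$ are handled identically by dimensions). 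By \eqref{eqn:BH} and \cref{lem:AC},
\[
\log C(\pi_1\times\pi_2)\ll A:=\mathcal{M}^{2+Y}\log(\mathcal{Q}\mathcal{M})+\mathcal{M}^2\log C(\chi)\leq \mathcal{M}^{2+Y}\log(\mathcal{Q}C(\chi)\mathcal{M}),
\]
and \cref{lem:GRC} gives $|a_{\pi_1\times\pi_2}(v^k)|\leq\mathcal{M}^2$. The truncated Perron formula at abscissa $\sigma_0=1+1/\log x$ and height $T\geq 2$ writes $\psi(x):=\sum_{q_v^k\leq x}a_{\pi_1\times\pi_2}(v^k)\log q_v$ as the contour integral of $-\frac{L'}{L}(s,\pi_1\times\pi_2)\,\frac{x^s}{s}$ along $\mathrm{Re}(s)=\sigma_0$, $|\mathrm{Im}(s)|\leq T$, plus an $O(\mathcal{M}^2 x(\log x)^2/T)$ error.

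Next I would shift the contour to $\mathrm{Re}(s)=1-\alpha$, where $\alpha=\Cr{main}/(A+\mathcal{M}^2[F:\mathbb{Q}]\log T)$ so that \cref{cor:big_ZFR2} makes the box $[1-\alpha,\sigma_0]\times[-T,T]$ zero-free. Since $L(s,\pi_1\times\pi_2)$ is entire and we keep $s=0$ outside the box, no residues are encountered. Applying Borel--Carathéodory on a slightly larger zero-free rectangle gives the standard bound $|L'/L(s,\pi_1\times\pi_2)|\ll (\log C(\pi_1\times\pi_2,t))^2$ on the shifted line, so the vertical integral contributes $\ll \mathcal{M}^2 x^{1-\alpha}(\log T)(A+[F:\mathbb{Q}]\log T)^2$ and the two horizontal integrals contribute $\ll \mathcal{M}^2 x(\log x)(\log T)/T$. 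Balancing the dominant terms $x^{1-\alpha}$ and $x/T$ by the choice $\log T=\alpha\log x$ reduces the optimization to the quadratic $\mathcal{M}^2[F:\mathbb{Q}](\log T)^2+A\log T=\Cr{main}\log x$, whose positive root satisfies
\[
\alpha\log x=\log T\geq \frac{\Cr{main}\log x}{A+\mathcal{M}\sqrt{\Cr{main}[F:\mathbb{Q}]\log x}}.
\]
Absorbing the accumulated $\mathcal{M}^{2+Y}$ and $\log$ factors into the polynomial correction $(\mathcal{M}^{2+Y}\log(\mathcal{Q}C(\chi)\mathcal{M}x))^6$, and shrinking the constant in the exponent from $\Cr{main}$ to $\Cr{main}/8$ to tolerate these losses, recovers the claimed bound for $\psi(x)$.

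Finally, to pass from $\psi(x)$ to $\sum_{v\notin S_\pi\cup S_{\pi'}\cup S_\chi,\,q_v\leq x}a_{\pi_1\times\pi_2}(v)$, I would (i) drop the $k\geq 2$ prime-power contribution, which by \eqref{eqn:Ramanujan2} is $\ll \mathcal{M}^2 x^{1/2+\theta_{m+1}+\theta_{n+1}}$ and is dominated by the main bound since $\theta_n<1/2$; (ii) discard the ramified places, which number $\ll\log C(\pi_1\times\pi_2)\ll A$ and each contribute $\ll\mathcal{M}^2\log x$; and (iii) apply Abel summation to strip the $\log q_v$ weight at the cost of one factor of $\log x$. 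The main technical obstacle is the quantitative bookkeeping in the contour shift: one has to set up the Borel--Carathéodory estimate on a sub-rectangle of width $\asymp\alpha/2$ strictly inside the zero-free region, and track all numerical constants so that $\Cr{main}$ from \cref{cor:big_ZFR2} degrades only to $\Cr{main}/8$ in the final exponent while the residual polynomial growth fits inside the sixth-power factor. These are routine but tedious estimates in the spirit of \cite[Chapter 5]{IK}.
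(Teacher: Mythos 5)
Your proposal is, in substance, the paper's own proof: the paper disposes of this proposition in a few lines by saying ``proceed as in \cite[Theorem 5.13]{IK},'' which is exactly your truncated-Perron/contour-shift argument driven by the zero-free region of \cref{cor:big_ZFR2}, combined with the coefficient bound of \cref{lem:GRC}, a Brun--Titchmarsh count of prime ideals above rational primes, removal of prime powers and ramified places, and partial summation to strip the $\log q_v$ weight. Your balancing $\log T=\alpha\log x$ reproduces the stated exponent with denominator $\mathcal{M}^{2+Y}\log(\mathcal{Q}C(\chi)\mathcal{M})+\mathcal{M}\sqrt{[F:\mathbb{Q}]\log x}$ (up to the constant), and your explicit verification that $L(s,\mathrm{Sym}^m(\pi)\times(\mathrm{Sym}^n(\pi')\otimes\chi))$ has no pole (dimension count, \cref{lem:Rajan_multiplicity_one}, and the $m=n=1$ case) is indeed needed; the paper leaves it implicit in the standing hypothesis $\pi\not\sim\pi'$ of that section.

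One step needs repair. In (i) you bound the $\ell\geq 2$ prime-power contribution via \eqref{eqn:Ramanujan2} by $\mathcal{M}^2x^{1/2+\theta_{m+1}+\theta_{n+1}}$ and justify its domination by ``$\theta_n<1/2$.'' But \eqref{eqn:Ramanujan1} only gives each $\theta_k<1/2$ separately, so $\theta_{m+1}+\theta_{n+1}$ can be arbitrarily close to $1$ and the exponent $1/2+\theta_{m+1}+\theta_{n+1}$ can exceed $1$; as written, this term need not be dominated by $x$ times the exponential saving once $\mathcal{M}$ is large. The fix is immediate and is what the paper does: use \cref{lem:GRC}, which you already quoted, giving $|a_{\mathrm{Sym}^m(\pi)\times(\mathrm{Sym}^n(\pi')\otimes\chi)}(v^{\ell})|\leq(m+1)(n+1)$ for \emph{all} $\ell\geq 1$, so that together with the Brun--Titchmarsh count the $\ell\geq 2$ terms contribute $\ll\mathcal{M}^2[F:\mathbb{Q}]x^{1/2}\log x$, which is harmless. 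With that substitution your argument is complete and matches the paper's.
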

\begin{proof}
At most $[F:\mathbb{Q}]$ nonarchimedean places of $F$ lie above any rational prime. Using the Brun--Titchmarsh theorem, we find that
\begin{equation}
\label{eqn:BT}
\#\{v\nmid\infty\colon q_v\in(x,x+h]\}\ll [F:\mathbb{Q}]h/\log h,\qquad x>0,\quad h>1.
\end{equation}
By Corollary \ref{cor:big_ZFR2}, Lemma \ref{lem:GRC}, and \eqref{eqn:BT}, if $x\geq 2$, then we can proceed as in \cite[Theorem 5.13]{IK} to obtain
\begin{align*}
&\sum_{\substack{\ell\geq 1,~q_v^{\ell}\leq x}}a_{\mathrm{Sym}^m(\pi)\times(\mathrm{Sym}^n(\pi')\otimes\chi)}(v^{\ell})\log q_v\\
&\ll x\exp\Big(-\frac{(\Cr{main}/4)\log x}{\mathcal{M}^{2+Y}\log(\mathcal{Q}C(\chi)\mathcal{M})+\mathcal{M}\sqrt{\Cr{main} [F:\mathbb{Q}]\log x}}\Big) (\mathcal{M}^{2+Y}\log(\mathcal{Q}C(\chi)\mathcal{M}x))^6.
\end{align*}
By Lemma \ref{lem:GRC} and \eqref{eqn:BT}, the contribution from $\ell\geq 2$ or $v\in S_{\pi}\cup S_{\pi'}\cup S_{\chi}$ is negligible.  The proposition now follows by partial summation.
\end{proof}
\begin{corollary}
\label{cor:PNT}
Let $\chi\in\mathfrak{F}_{1}^*$ correspond with a ray class character, and assume that $S_{\chi}\subseteq S_{\pi}\cup S_{\pi'}$ and $C(\chi)\leq \mathcal{Q}$.  If \eqref{eqn:x_range} and \eqref{eqn:M_def} are satisfied, then
\[
\sum_{\substack{0\leq m,n\leq M \\ (m,n)\neq(0,0)}}\sum_{\substack{v\notin S_{\pi}\cup S_{\pi'} \\ q_v\leq x}}a_{\mathrm{Sym}^m(\pi)\times(\mathrm{Sym}^n(\pi')\otimes\chi)}(v)\ll\frac{x}{(\mathcal{Q}\log x)^{\Cr{CDT}+1}}.
\]
\end{corollary}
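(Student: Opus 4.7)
The plan is to apply \cref{prop:PNT} term by term to each pair $(m,n)$ with $0\leq m,n\leq M$, $(m,n)\neq(0,0)$, and to sum over the $O(M^{2})=O(\log x)$ such pairs, where the bound $M\ll\sqrt{\log x}$ is read off from \eqref{eqn:M_def}. Fix one such pair and set $\mathcal{M}=\max\{m,n\}\leq M$. Using the hypothesis $C(\chi)\leq\mathcal{Q}$ together with $\mathcal{M}\leq M\ll\sqrt{\log x}$, I first simplify $\log(\mathcal{Q}C(\chi)\mathcal{M})\leq 3\log(\mathcal{Q}\log x)$; the polylog factor $(\mathcal{M}^{2+Y}\log(\mathcal{Q}C(\chi)\mathcal{M}x))^{6}$ in \cref{prop:PNT} is then merely $(\log x)^{O(1)}$, harmless against any power saving in $\mathcal{Q}\log x$.

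The core of the computation is substituting \eqref{eqn:M_def} into the denominator of the exponent $E$ supplied by \cref{prop:PNT}. With $A=\sqrt{\Cr{main}\log x}/\bigl((16\Cr{CDT}+224)\log(\mathcal{Q}\log x)\bigr)$ so that $M=[F:\mathbb{Q}]^{-1/2}A^{2/(2+Y)}$, direct substitution yields
\[
M^{2+Y}\log(\mathcal{Q}\log x)=\frac{[F:\mathbb{Q}]^{-1-Y/2}\,\Cr{main}\log x}{(16\Cr{CDT}+224)^{2}\log(\mathcal{Q}\log x)},\quad M\sqrt{\Cr{main}[F:\mathbb{Q}]\log x}=\sqrt{\Cr{main}\log x}\,A^{2/(2+Y)}.
\]
When $Y=0$ the second term dominates and equals $\Cr{main}\log x/\bigl((16\Cr{CDT}+224)\log(\mathcal{Q}\log x)\bigr)$; the denominator is at most $2\Cr{main}\log x/\bigl((16\Cr{CDT}+224)\log(\mathcal{Q}\log x)\bigr)$, so $E\geq(\Cr{CDT}+14)\log(\mathcal{Q}\log x)$ and $\exp(-E)\leq(\mathcal{Q}\log x)^{-\Cr{CDT}-14}$. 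When $Y=2$ the second term becomes $\Cr{main}^{3/4}(\log x)^{3/4}/\bigl((16\Cr{CDT}+224)^{1/2}\log^{1/2}(\mathcal{Q}\log x)\bigr)$, producing $E\gg(\log x)^{1/4}\log^{1/2}(\mathcal{Q}\log x)$; the quadratic lower bound $\log x\gg[F:\mathbb{Q}]^{2}(\log\mathcal{Q})^{2}$ supplied by \eqref{eqn:x_range} then gives $(\log x)^{1/4}\gg\log^{1/2}(\mathcal{Q}\log x)$, forcing $E\geq(\Cr{CDT}+14)\log(\mathcal{Q}\log x)$ in this case too.

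Combining $\exp(-E)\leq(\mathcal{Q}\log x)^{-\Cr{CDT}-14}$ with the $(\log x)^{O(1)}$ polylog factor and the $O(\log x)$ count of pairs produces the desired $\ll x(\mathcal{Q}\log x)^{-\Cr{CDT}-1}$. The only obstacle is the bookkeeping around the constant $16\Cr{CDT}+224$ inside \eqref{eqn:M_def}: one must check that after dividing the numerator $(\Cr{main}/8)\log x$ by the denominator $2\Cr{main}\log x/\bigl((16\Cr{CDT}+224)\log(\mathcal{Q}\log x)\bigr)$, the factor of $16$ in the ratio $(16\Cr{CDT}+224)/16=\Cr{CDT}+14$ leaves enough slack ($\Cr{CDT}+14$ versus the target $\Cr{CDT}+1$) to absorb both the polylogarithmic factor and the count of $(m,n)$-pairs; this slack of $13$ is more than sufficient, and the $Y=2$ case only improves the savings.
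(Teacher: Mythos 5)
Your proposal is correct and is essentially the argument the paper intends (the paper's proof is just the remark that the corollary follows from \cref{prop:PNT} and the hypotheses on $\chi$, $x$, and $M$): one substitutes the choice \eqref{eqn:M_def} of $M$ and the range \eqref{eqn:x_range} into \cref{prop:PNT}, checks that the exponent is at least $(\Cr{CDT}+14)\log(\mathcal{Q}\log x)$, and absorbs the polylogarithmic factor and the $O(M^{2})\ll\log x$ pairs $(m,n)$ into the spare powers of $\mathcal{Q}\log x\geq \log x$, exactly as you do. One minor slip worth noting: for $Y=2$ your intermediate bound $E\gg(\log x)^{1/4}\log^{1/2}(\mathcal{Q}\log x)$ tacitly assumes that the term $M\sqrt{\Cr{main}[F:\mathbb{Q}]\log x}$ dominates the denominator, which can fail when $\log x$ is very large compared with $\log(\mathcal{Q}\log x)$; but in that regime the term $M^{2+Y}\log(\mathcal{Q}C(\chi)M)$ dominates and, by the same computation as in your $Y=0$ case, gives $E\geq\tfrac{1}{48}(16\Cr{CDT}+224)^{2}\log(\mathcal{Q}\log x)$, which is even stronger, so the conclusion is unaffected.
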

\begin{proof}
This follows from Proposition \ref{prop:PNT} and our hypotheses on $\chi$, $x$, and $M$.
\end{proof}

\subsection{Proof of \eqref{eqn:ST_2.0}}

The Chebyshev polynomials of the second kind, defined by the relation $U_m(\cos\theta) = \sin((m+1)\theta)/(\sin\theta)$, form an orthonormal basis of $L^2([0,\pi],\widetilde{\mu}_{\mathrm{ST}})$.  We approximate $\mathbf{1}_{I}(\theta_{v})\mathbf{1}_{I'}(\theta_{v}')$ with respect to this basis as follows.

\begin{lemma}
\label{lem:ET1}
Let $I,I'\subseteq[0,\pi]$ be subintervals.  For each $M\geq 1$, there exist functions
\[
T_{I,I',M}^{\pm}(\theta,\theta')=\sum_{0\leq m\leq M}~\sum_{0\leq n\leq M}\alpha_{I,I',M}^{\pm}(m,n)U_m(\cos \theta)U_n(\cos \theta')
\]
such that if $\theta,\theta'\in[0,\pi]$, then $T_{I,I',M}^{-}(\theta,\theta')\leq \mathbf{1}_{I}(\theta)\mathbf{1}_{I'}(\theta')\leq T_{I,I',M}^{+}(\theta,\theta')$.  There holds
	\[
	\left|\alpha_{I,I',M}^{\pm}(m,n)-\begin{cases}
	\mu_{\mathrm{ST}}(I)\mu_{\mathrm{ST}}(I')&\mbox{if $m=n=0$,}\\
	0&\mbox{otherwise}
	\end{cases}\right|\ll\begin{cases}
	1/M&\mbox{if $m=n=0$,}\\
	1/m&\mbox{if $m\neq 0$ and $n=0$,}\\
	1/n&\mbox{if $m=0$ and $n\neq 0$,}\\
	1/(mn)&\mbox{otherwise.}
	\end{cases}
	\]
\end{lemma}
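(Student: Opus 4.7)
My plan is to build the two-variable polynomials $T^{\pm}_{I,I',M}$ from one-variable Beurling--Selberg--Vaaler approximants by a tensor-product construction, using a non-obvious identity for the minorant to circumvent the fact that one-dimensional Vaaler minorants are generally not non-negative.

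First, I would invoke the following classical one-variable statement: for every subinterval $J\subseteq[0,\pi]$ and every integer $M\geq 1$, there exist polynomials
\[
F^{\pm}_{J,M}(\theta)=\sum_{m=0}^{M}\beta^{\pm}_{J,M}(m)\,U_m(\cos\theta)
\]
with $F^-_{J,M}(\theta)\leq\mathbf{1}_J(\theta)\leq F^+_{J,M}(\theta)$ for every $\theta\in[0,\pi]$, and with $|\beta^{\pm}_{J,M}(0)-\mu_{\mathrm{ST}}(J)|\ll 1/M$ and $|\beta^{\pm}_{J,M}(m)|\ll 1/m$ for $1\leq m\leq M$. One obtains these by transferring the Selberg--Vaaler extremal polynomials for the indicator of an arc on $\mathbb{R}/\mathbb{Z}$ to $([0,\pi],\widetilde{\mu}_{\mathrm{ST}})$ via the identity $U_m(\cos\theta)=\sin((m+1)\theta)/\sin\theta$, which realises the $U_m(\cos\theta)$ as the characters of the irreducible representations of $\mathrm{SU}(2)$ and makes $\widetilde{\mu}_{\mathrm{ST}}$ the corresponding Plancherel measure.

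Next, I would set
\begin{align*}
T^+_{I,I',M}(\theta,\theta')&=F^+_{I,M}(\theta)\,F^+_{I',M}(\theta'),\\
T^-_{I,I',M}(\theta,\theta')&=F^-_{I,M}(\theta)\,F^+_{I',M}(\theta')+F^+_{I,M}(\theta)\,F^-_{I',M}(\theta')-F^+_{I,M}(\theta)\,F^+_{I',M}(\theta').
\end{align*}
Writing $F^+_J=\mathbf{1}_J+U_J$ and $F^-_J=\mathbf{1}_J-V_J$ with $U_J,V_J\geq 0$ by construction, a direct expansion gives
\[
T^+_{I,I',M}-\mathbf{1}_I\mathbf{1}_{I'}=\mathbf{1}_I U_{I'}+U_I\mathbf{1}_{I'}+U_I U_{I'}\geq 0
\]
and
\[
\mathbf{1}_I\mathbf{1}_{I'}-T^-_{I,I',M}=V_I\mathbf{1}_{I'}+\mathbf{1}_I V_{I'}+V_I U_{I'}+U_I V_{I'}+U_I U_{I'}\geq 0,
\]
which yields the pointwise sandwich. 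The naive minorant $F^-_I F^-_{I'}$ would fail, because $F^-_J$ is typically negative outside $J$ and the product could then be positive where $\mathbf{1}_I\mathbf{1}_{I'}$ vanishes; the identity above is the standard workaround and is really the only subtle point of the argument.

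Finally, since $\{U_m(\cos\theta)U_n(\cos\theta'):m,n\geq 0\}$ is an orthonormal basis of $L^2([0,\pi]^2,\widetilde{\mu}_{\mathrm{ST}}\otimes\widetilde{\mu}_{\mathrm{ST}})$, distributing the products gives
\[
\alpha^+_{I,I',M}(m,n)=\beta^+_{I,M}(m)\beta^+_{I',M}(n),
\]
\[
\alpha^-_{I,I',M}(m,n)=\beta^-_{I,M}(m)\beta^+_{I',M}(n)+\beta^+_{I,M}(m)\beta^-_{I',M}(n)-\beta^+_{I,M}(m)\beta^+_{I',M}(n).
\]
Inserting the one-variable estimates on $\beta^{\pm}_{J,M}$ and splitting into the four cases $(0,0)$, $(m,0)$ with $m\geq 1$, $(0,n)$ with $n\geq 1$, and $(m,n)$ with $m,n\geq 1$, the claimed bounds follow by routine multiplication (note that in every case involving the constant term, the leading contribution collapses to $\mu_{\mathrm{ST}}(I)\mu_{\mathrm{ST}}(I')+O(1/M)$ via the identity $a b+a'b'-a'b = a'b'+(a-a')b$). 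The substantive input is the one-variable Beurling--Selberg--Vaaler construction; the only difficulty in lifting to two variables is the sign issue for the minorant, which the formula for $T^-$ resolves.
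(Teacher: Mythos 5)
Your proposal is correct, but it reaches \cref{lem:ET1} by a different route than the paper. The paper does not tensor one-variable approximants: it invokes Cochrane's two-dimensional Selberg-type construction (majorants and minorants $S^{\pm}_{J_1,J_2,M}$ for $\mathbf{1}_{J_1}(x_1)\mathbf{1}_{J_2}(x_2)$ on $(\mathbb{R}/\mathbb{Z})^2$ with Fourier coefficients close to $\widehat{\mathbf{1}}_{J_1}(m_1)\widehat{\mathbf{1}}_{J_2}(m_2)$), then symmetrizes over $(\pm\theta,\pm\theta')$ and converts to the Chebyshev basis via $U_m(\cos\theta)-U_{m-2}(\cos\theta)=2\cos(m\theta)$ with the conventions $U_{-1}=0$, $U_{-2}=-1$. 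You instead take the classical one-variable Beurling--Selberg--Vaaler approximants in Chebyshev form and build the two-variable majorant as $F^+_IF^+_{I'}$ and the minorant as $F^-_IF^+_{I'}+F^+_IF^-_{I'}-F^+_IF^+_{I'}$; your expansion in terms of $U_J=F^+_J-\mathbf{1}_J\geq 0$ and $V_J=\mathbf{1}_J-F^-_J\geq 0$ correctly verifies the sandwich, and you rightly identify the sign problem for the naive product $F^-_IF^-_{I'}$ as the one genuine subtlety, which your correction formula resolves. Since the two variables are separated, the coefficients are exactly the stated bilinear expressions in the $\beta^{\pm}$'s, and the claimed bounds ($1/M$, $1/m$, $1/n$, $1/(mn)$) follow at once from the one-variable estimates; note that your route gives the multiplicative bound $\ll 1/(mn)$ for $m,n\geq 1$ directly, whereas the paper gets it from the explicit shape of Cochrane's coefficients rather than from the additive error $O(1/M)$ alone. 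What each approach buys: the paper's is shorter because the two-dimensional extremal problem is outsourced to Cochrane's theorem, needing only the basis conversion; yours is more self-contained, requiring only the one-dimensional extremal theory (your transfer to $([0,\pi],\widetilde{\mu}_{\mathrm{ST}})$, including the constant-term statement $\beta^{\pm}_{J,M}(0)=\widetilde{\mu}_{\mathrm{ST}}(J)+O(1/M)$ via orthonormality of the $U_m$, is the standard one and should be stated with a precise reference if used), at the cost of carrying out the minorant correction yourself.
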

\begin{proof}
For $j=1,2$, let $I_j=[a_j,b_j]\subseteq[0,\pi]$ and $J_j=[a_j/(2\pi),b_j/(2\pi)]$.  Given an interval $J\subseteq[0,1]$, the Fourier expansion of $\mathbf{1}_J(\theta)$ is $\mathbf{1}_J(\theta)=\sum_{n\in\mathbb{Z}}\widehat{\mathbf{1}}_J(n)e^{2\pi i n \theta}$, where $\widehat{\mathbf{1}}_J(n)=\int_J e^{-2\pi i n t}dt$. Cochrane \cite[Theorem 1]{Cochrane} proved that there exist functions
\[
S_{J_1,J_2,M}^{\pm}(x_1,x_2)=\sum_{\substack{m_1,m_2\in\mathbb{Z} \\ 0\leq |m_1|,|m_2|\leq M}}\widehat{S}_{J_1,J_2,M}^{\pm}(m_1,m_2)e^{2\pi i(m_1 x_1+m_2 x_2)}
\]
with the following properties.
\begin{enumerate}
	\item If $(x_1,x_2)\in[0,1]^2$, then $S_{I_1,I_2,M}^-(x_1,x_2)\leq \mathbf{1}_{J_1}(x_1)\mathbf{1}_{J_2}(x_2)\leq S_{I_1,I_2,M}^+(x_1,x_2)$.
	\item The identity $\widehat{S}_{J_1,J_2,M}^{\pm}(m_1,m_2)+\widehat{S}_{J_1,J_2,M}^{\pm}(-m_1,-m_2)=2\mathrm{Re}(\widehat{S}_{J_1,J_2,M}^{\pm}(m_1,m_2))$ holds.
	\item If $0\leq |m_1|,|m_2|\leq M$, then $|\widehat{S}_{J_1,J_2,M}^{\pm}(m_1,m_2)-\widehat{1}_{J_1}(m_1)\widehat{1}_{J_2}(m_2)|\ll 1/M$.
\end{enumerate}

The desired functions $T_{I_1,I_2,M}^{\pm}(\theta,\theta')$ are the even functions 
\begin{align*}
\sum_{t_1\in\{-1,1\}}\sum_{t_2\in\{-1,1\}}S_{J_1,J_2,M}^{\pm}\Big(t_1\frac{\theta}{2\pi},t_2\frac{\theta'}{2\pi}\Big)
\end{align*}
Once we introduce $U_{-1}(\cos\theta)=0$ and $U_{-2}(\cos\theta)=-1$, we express $T_{I_1,I_2,M}^{\pm}$ in the Chebyshev basis using the identity $U_m(\cos\theta)-U_{m-2}(\cos\theta)=2\cos(m\theta)$ for $m\geq 0$.
\end{proof}


\begin{corollary}
\label{cor:ET2}
If $x\geq 2$ and $I,I'\subseteq[-1,1]$ are subintervals, then
\begin{equation}
\label{eqn:ET2}
\begin{aligned}
&\Big|\sum_{\substack{v\in\mathcal{V} \\ q_v\leq x}}\mathbf{1}_I(\theta_{v})\mathbf{1}_{I'}(\theta_{v}')\Big\slash\sum_{\substack{v\in\mathcal{V} \\ q_v\leq x}}1-\widetilde{\mu}_{\mathrm{ST}}(I)\widetilde{\mu}_{\mathrm{ST}}(I')\Big|\\
&\ll \frac{1}{M}+\sum_{\substack{0\leq m,n\leq M \\ (m,n)\neq (0,0)}}  \Big|\sum_{\substack{v\in\mathcal{V} \\ q_v\leq x}}a_{\mathrm{Sym}^m(\pi)\times\mathrm{Sym}^n(\pi')}(v)\Big|\Big\slash\sum_{\substack{v\in\mathcal{V} \\ q_v\leq x}}1.
\end{aligned}
\end{equation}
\end{corollary}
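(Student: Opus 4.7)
The plan is to sandwich $\mathbf{1}_I(\theta_v)\mathbf{1}_{I'}(\theta_v')$ between the trigonometric polynomials $T_{I,I',M}^{\pm}$ supplied by \cref{lem:ET1}, sum the resulting inequality over $v\in\mathcal{V}$ with $q_v\leq x$, and then translate each monomial in the Chebyshev basis into a Rankin--Selberg Hecke eigenvalue using the Satake parametrization at $v\in\mathcal{V}$.

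The first step is a Satake computation. For $v\in\mathcal{V}$, the diagonalization $A_{\pi}(v)=\mathrm{diag}(\zeta e^{i\theta_v},\zeta e^{-i\theta_v})$ lifts to $\mathrm{Sym}^m(\pi_v)$ with eigenvalues $\zeta^{m}e^{i(2j-m)\theta_v}$ for $j=0,1,\ldots,m$. Summing the geometric series gives $a_{\mathrm{Sym}^m(\pi)}(v)=\zeta^{m}U_m(\cos\theta_v)$, and similarly $a_{\mathrm{Sym}^n(\pi')}(v)={\zeta'}^{n}U_n(\cos\theta_v')$. Because $v\notin S_{\pi}\cup S_{\pi'}$, the first case of \eqref{eqn:a_def} yields $a_{\mathrm{Sym}^m(\pi)\times\mathrm{Sym}^n(\pi')}(v)=\zeta^{m}{\zeta'}^{n}\,U_m(\cos\theta_v)U_n(\cos\theta_v')$, and since $\zeta$ and $\zeta'$ are roots of unity this produces the key identity
\[
\Big|\sum_{\substack{v\in\mathcal{V}\\ q_v\leq x}}U_m(\cos\theta_v)U_n(\cos\theta_v')\Big| = \Big|\sum_{\substack{v\in\mathcal{V}\\ q_v\leq x}}a_{\mathrm{Sym}^m(\pi)\times\mathrm{Sym}^n(\pi')}(v)\Big|.
\]

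Next, I would apply \cref{lem:ET1} pointwise with $(\theta,\theta')=(\theta_v,\theta_v')$, sum over $v\in\mathcal{V}$ with $q_v\leq x$, and divide by $N(x):=\#\{v\in\mathcal{V}:q_v\leq x\}$. Isolating the $(m,n)=(0,0)$ contribution (whose coefficient differs from $\widetilde{\mu}_{\mathrm{ST}}(I)\widetilde{\mu}_{\mathrm{ST}}(I')$ by $O(1/M)$ and whose Chebyshev factor $U_0U_0\equiv 1$ contributes $N(x)$ to the sum), I arrive at the two-sided bound
\[
\Big|\frac{1}{N(x)}\sum_{\substack{v\in\mathcal{V}\\ q_v\leq x}}\mathbf{1}_I(\theta_v)\mathbf{1}_{I'}(\theta_v')-\widetilde{\mu}_{\mathrm{ST}}(I)\widetilde{\mu}_{\mathrm{ST}}(I')\Big| \ll \frac{1}{M}+\sum_{\substack{0\leq m,n\leq M\\ (m,n)\neq(0,0)}}|\alpha_{I,I',M}^{\pm}(m,n)|\cdot\frac{|S(m,n,x)|}{N(x)},
\]
where $S(m,n,x):=\sum_{v\in\mathcal{V},\,q_v\leq x}U_m(\cos\theta_v)U_n(\cos\theta_v')$.

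Finally, the coefficient estimates in \cref{lem:ET1} give $|\alpha_{I,I',M}^{\pm}(m,n)|\ll 1$ for all $(m,n)\neq(0,0)$, and combining this with the Satake identity of the first step converts the right-hand side into the one appearing in \eqref{eqn:ET2}. The proof is essentially an unwinding of definitions, so there is no genuine obstacle; the one piece that requires care, which I would highlight explicitly, is the Chebyshev--Satake matching $a_{\mathrm{Sym}^m(\pi)\times\mathrm{Sym}^n(\pi')}(v) = \zeta^{m}{\zeta'}^{n}U_m(\cos\theta_v)U_n(\cos\theta_v')$ at $v\in\mathcal{V}$, whose modulus equates the two sums above.
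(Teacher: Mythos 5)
Your proposal is correct and takes essentially the same route as the paper: sandwich $\mathbf{1}_I(\theta_v)\mathbf{1}_{I'}(\theta_v')$ by the polynomials $T^{\pm}_{I,I',M}$ of \cref{lem:ET1}, bound the nonconstant coefficients by $O(1)$, and use the Satake identity $a_{\mathrm{Sym}^m(\pi)\times\mathrm{Sym}^n(\pi')}(v)=\zeta^{m}{\zeta'}^{n}U_m(\cos\theta_v)U_n(\cos\theta_v')$ for $v\in\mathcal{V}$, which is exactly the identity the paper invokes.
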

\begin{proof}
This is a straightforward consequence of Lemma \ref{lem:ET1} and the fact that if $v\in\mathcal{V}$, then $U_m(\cos\theta_{v})U_n(\cos\theta_{v}')=\zeta^{-m} \zeta'^{-n} a_{\mathrm{Sym}^m(\pi)\times \mathrm{Sym}^n(\pi')}(v)$.
\end{proof}

If $G$ is the finite abelian group of characters generated by $\omega_{\pi}$ and $\omega_{\pi'}$, then the condition $v\in\mathcal{V}$ can be expressed using orthogonality of characters in $G$.  If $\chi\in G$, then there exist integers $a,b\geq 0$ such that $\chi=\omega_{\pi}^a \omega_{\pi'}^b$.  Since $\widetilde{\pi}=\pi\otimes\overline{\omega}_{\pi}$ and $\widetilde{\pi}'=\pi'\otimes\overline{\omega}_{\pi'}$,  we have that $S_{\chi}\subseteq S_{\pi}\cup S_{\pi'}$ and $C(\chi)\leq \mathcal{Q}$ per \cite[Theorem A]{RamakrishnanYang}.  We now observe that
\begin{equation}
\label{eqn:char_orth}
\Big|\sum_{\substack{v\in\mathcal{V} \\ q_v\leq x}}a_{\mathrm{Sym}^m(\pi)\times\mathrm{Sym}^n(\pi')}(v)\Big|\ll \max_{\chi\in G}\Big|\sum_{\substack{v\notin S_{\pi}\cup S_{\pi'} \\ q_v\leq x}}a_{\mathrm{Sym}^m(\pi)\times(\mathrm{Sym}^n(\pi')\otimes\chi)}(v)\Big|.
\end{equation}
Therefore, by Lemma \ref{lem:CDT}, Corollary \ref{cor:PNT}, and \eqref{eqn:char_orth}, we find that \eqref{eqn:ET2} is, as desired,
\[
\ll \frac{1}{M}+\frac{\mathcal{Q}^{\Cr{CDT}}}{(\mathcal{Q}\log x)^{\Cr{CDT}}}\ll \sqrt{[F:\mathbb{Q}]}\Big(\frac{\log(\mathcal{Q}\log x)}{\sqrt{\log x}}\Big)^{\frac{2}{2+Y}}.
\]
\bibliographystyle{abbrv}
\bibliography{JAThorner_SiegelZeros}

\begin{thebibliography}{10}

\bibitem{potential}
P.~B. Allen, F.~Calegari, A.~Caraiani, T.~Gee, D.~Helm, B.~V. Le~Hung,
  J.~Newton, P.~Scholze, R.~Taylor, and J.~A. Thorne.
\newblock Potential automorphy over {CM} fields.
\newblock {\em Ann. of Math. (2)}, 197(3):897--1113, 2023.

\bibitem{Banks}
W.~D. Banks.
\newblock Twisted symmetric-square {$L$}-functions and the nonexistence of
  {S}iegel zeros on {$\mathrm{GL}(3)$}.
\newblock {\em Duke Math. J.}, 87(2):343--353, 1997.

\bibitem{BLGG}
T.~Barnet-Lamb, T.~Gee, and D.~Geraghty.
\newblock The {S}ato-{T}ate conjecture for {H}ilbert modular forms.
\newblock {\em J. Amer. Math. Soc.}, 24(2):411--469, 2011.

\bibitem{BLGHT}
T.~Barnet-Lamb, D.~Geraghty, M.~Harris, and R.~Taylor.
\newblock A family of {C}alabi-{Y}au varieties and potential automorphy {II}.
\newblock {\em Publ. Res. Inst. Math. Sci.}, 47(1):29--98, 2011.

\bibitem{Brumley}
F.~Brumley.
\newblock Effective multiplicity one on $\mathrm{GL}_n$ and narrow zero-free
  regions for {R}ankin-{S}elberg {$L$}-functions.
\newblock {\em Amer. J. Math.}, 128(6):1455--1474, 2006.

\bibitem{BushnellHenniart}
C.~J. Bushnell and G.~Henniart.
\newblock An upper bound on conductors for pairs.
\newblock {\em J. Number Theory}, 65(2):183--196, 1997.

\bibitem{Cochrane}
T.~Cochrane.
\newblock Trigonometric approximation and uniform distribution modulo one.
\newblock {\em Proc. Amer. Math. Soc.}, 103(3):695--702, 1988.

\bibitem{CogdellMichel}
J.~Cogdell and P.~Michel.
\newblock On the complex moments of symmetric power {$L$}-functions at {$s=1$}.
\newblock {\em Int. Math. Res. Not.}, (31):1561--1617, 2004.

\bibitem{David}
C.~David, A.~Gafni, A.~Malik, N.~Prabhu, and C.~L. Turnage-Butterbaugh.
\newblock Extremal primes for elliptic curves without complex multiplication.
\newblock {\em Proc. Amer. Math. Soc.}, 148(3):929--943, 2020.

\bibitem{GJ}
S.~Gelbart and H.~Jacquet.
\newblock A relation between automorphic representations of {$\mathrm{GL}(2)$}
  and {$\mathrm{GL}(3)$}.
\newblock {\em Ann. Sci. \'{E}cole Norm. Sup. (4)}, 11(4):471--542, 1978.

\bibitem{GodementJacquet}
R.~Godement and H.~Jacquet.
\newblock {\em Zeta functions of simple algebras}.
\newblock Lecture Notes in Mathematics, Vol. 260. Springer-Verlag, Berlin-New
  York, 1972.

\bibitem{HarcosThorner}
G.~{Harcos} and J.~{Thorner}.
\newblock {A new zero-free region for Rankin-Selberg $L$-functions}.
\newblock {\em J. Reine Angew. Math.}
\newblock Accepted for publication.

\bibitem{Harris}
M.~Harris.
\newblock Potential automorphy of odd-dimensional symmetric powers of elliptic
  curves and applications.
\newblock In {\em Algebra, arithmetic, and geometry: in honor of {Y}u. {I}.
  {M}anin. {V}ol. {II}}, volume 270 of {\em Progr. Math.}, pages 1--21.
  Birkh\"{a}user Boston, Boston, MA, 2009.

\bibitem{HoffsteinLockhart}
J.~Hoffstein and P.~Lockhart.
\newblock Coefficients of {M}aass forms and the {S}iegel zero.
\newblock {\em Ann. of Math. (2)}, 140(1):161--181, 1994.
\newblock Appendix by Dorian Goldfeld, Hoffstein and Daniel Lieman.

\bibitem{HoffsteinRamakrishnan}
J.~Hoffstein and D.~Ramakrishnan.
\newblock Siegel zeros and cusp forms.
\newblock {\em Int. Math. Res. Not.}, (6):279--308, 1995.

\bibitem{Humphries}
P.~Humphries and F.~Brumley.
\newblock Standard zero-free regions for {R}ankin-{S}elberg {$L$}-functions via
  sieve theory.
\newblock {\em Math. Z.}, 292(3-4):1105--1122, 2019.

\bibitem{IK}
H.~Iwaniec and E.~Kowalski.
\newblock {\em Analytic number theory}, volume~53 of {\em American Mathematical
  Society Colloquium Publications}.
\newblock American Mathematical Society, Providence, RI, 2004.

\bibitem{JPSS}
H.~Jacquet, I.~I. Piatetskii-Shapiro, and J.~A. Shalika.
\newblock Rankin-{S}elberg convolutions.
\newblock {\em Amer. J. Math.}, 105(2):367--464, 1983.

\bibitem{JS}
H.~Jacquet and J.~A. Shalika.
\newblock A non-vanishing theorem for zeta functions of {$\mathrm{GL}_{n}$}.
\newblock {\em Invent. Math.}, 38(1):1--16, 1976/77.

\bibitem{Kim}
H.~H. Kim.
\newblock Functoriality for the exterior square of {$\mathrm{GL}_4$} and the
  symmetric fourth of {$\mathrm{GL}_2$}.
\newblock {\em J. Amer. Math. Soc.}, 16(1):139--183, 2003.
\newblock With appendix 1 by Dinakar Ramakrishnan and appendix 2 by Kim and
  Peter Sarnak.

\bibitem{KimShahidi}
H.~H. Kim and F.~Shahidi.
\newblock Functorial products for $\mathrm{GL}_2\times\mathrm{GL}_3$ and the
  symmetric cube for $\mathrm{GL}_2$.
\newblock {\em Ann. of Math. (2)}, 155(3):837--893, 2002.
\newblock Appendix by Colin J. Bushnell and Guy Henniart.

\bibitem{Lapid}
E.~Lapid.
\newblock On the {H}arish-{C}handra {S}chwartz space of {$G(F)\backslash
  G(\mathbb{A})$}.
\newblock In {\em Automorphic representations and {$L$}-functions}, volume~22
  of {\em Tata Inst. Fundam. Res. Stud. Math.}, pages 335--377. Tata Inst.
  Fund. Res., Mumbai, 2013.
\newblock Appendix by Farrell Brumley.

\bibitem{Luo}
W.~Luo.
\newblock Non-existence of {S}iegel zeros for cuspidal functorial products on
  {$GL(2) \times GL(3)$}.
\newblock {\em Proc. Amer. Math. Soc.}, 151(5):1915--1919, 2023.

\bibitem{LRS}
W.~Luo, Z.~Rudnick, and P.~Sarnak.
\newblock On the generalized {R}amanujan conjecture for {$\mathrm{GL}(n)$}.
\newblock In {\em Automorphic forms, automorphic representations, and
  arithmetic ({F}ort {W}orth, {TX}, 1996)}, volume~66 of {\em Proc. Sympos.
  Pure Math.}, pages 301--310. Amer. Math. Soc., Providence, RI, 1999.

\bibitem{MoeglinWaldspurger}
C.~M\oe~glin and J.-L. Waldspurger.
\newblock Le spectre r\'{e}siduel de {$\mathrm{GL}(n)$}.
\newblock {\em Ann. Sci. \'{E}cole Norm. Sup. (4)}, 22(4):605--674, 1989.

\bibitem{MorenoShahidi}
C.~J. Moreno and F.~Shahidi.
\newblock The {$L$}-functions {$L(s,\mathrm{Sym}^m(r),\pi)$}.
\newblock {\em Canad. Math. Bull.}, 28(4):405--410, 1985.

\bibitem{MullerSpeh}
W.~M\"{u}ller and B.~Speh.
\newblock Absolute convergence of the spectral side of the {A}rthur trace
  formula for $\mathrm{GL}_n$.
\newblock {\em Geom. Funct. Anal.}, 14(1):58--93, 2004.
\newblock Appendix by E. M. Lapid.

\bibitem{NewtonThorne3}
J.~{Newton} and J.~A. {Thorne}.
\newblock {Symmetric power functoriality for Hilbert modular forms}.
\newblock {\em Ann. of Math. (2)}.
\newblock Accepted for publication.

\bibitem{NewtonThorne}
J.~Newton and J.~A. Thorne.
\newblock Symmetric power functoriality for holomorphic modular forms.
\newblock {\em Publ. Math. Inst. Hautes \'{E}tudes Sci.}, 134:1--116, 2021.

\bibitem{NewtonThorne2}
J.~Newton and J.~A. Thorne.
\newblock Symmetric power functoriality for holomorphic modular forms, {II}.
\newblock {\em Publ. Math. Inst. Hautes \'{E}tudes Sci.}, 134:117--152, 2021.

\bibitem{Rajan}
C.~S. Rajan.
\newblock Recovering modular forms and representations from tensor and
  symmetric powers.
\newblock In {\em Algebra and number theory}, pages 281--298. Hindustan Book
  Agency, Delhi, 2005.

\bibitem{RamakrishnanWang}
D.~Ramakrishnan and S.~Wang.
\newblock On the exceptional zeros of {R}ankin-{S}elberg {$L$}-functions.
\newblock {\em Compositio Math.}, 135(2):211--244, 2003.

\bibitem{RamakrishnanYang}
D.~Ramakrishnan and L.~Yang.
\newblock A constraint for twist equivalence of cusp forms on
  {$\mathrm{GL}(n)$}.
\newblock {\em Funct. Approx. Comment. Math.}, 65(1):105--117, 2021.

\bibitem{Shahidi}
F.~Shahidi.
\newblock On certain {$L$}-functions.
\newblock {\em Amer. J. Math.}, 103(2):297--355, 1981.

\bibitem{SoundararajanThorner}
K.~Soundararajan and J.~Thorner.
\newblock Weak subconvexity without a {R}amanujan hypothesis.
\newblock {\em Duke Math. J.}, 168(7):1231--1268, 2019.
\newblock Appendix by Farrell Brumley.

\bibitem{Thorner_SatoTate3}
J.~Thorner.
\newblock Effective forms of the {S}ato-{T}ate conjecture.
\newblock {\em Res. Math. Sci.}, 8(1):Paper No. 4, 21, 2021.

\bibitem{TZ_least}
J.~Thorner and A.~Zaman.
\newblock An explicit bound for the least prime ideal in the {C}hebotarev
  density theorem.
\newblock {\em Algebra Number Theory}, 11(5):1135--1197, 2017.

\bibitem{Wattanawanichkul}
N.~{Wattanawanichkul}.
\newblock {A metric approach to zero-free regions for $L$-functions}.
\newblock {\em arXiv e-prints}, page arXiv:2504.05606, Apr. 2025.

\bibitem{Wong}
P.-J. Wong.
\newblock On the {C}hebotarev-{S}ato-{T}ate phenomenon.
\newblock {\em J. Number Theory}, 196:272--290, 2019.

\end{thebibliography}

\end{document}